\documentclass{amsart} 

\usepackage[utf8]{inputenc} 
\usepackage{mathtools}
\usepackage{amssymb}
\usepackage{amsmath}
\usepackage{amsthm}
\usepackage{listings}
\usepackage{physics}
\usepackage{calrsfs}
\usepackage{enumerate}
\usepackage{tikz-cd}
\usepackage{mathrsfs}
\usepackage{bm}
\usepackage{esvect}
\usepackage{enumitem}
\usepackage{hyperref}
\usepackage[final]{pdfpages}
\usepackage{verbatim}

\usepackage[backend=biber, style=numeric, sorting=nyt]{biblatex}
\addbibresource{references.bib}

 \usepackage[parfill]{parskip}
 
\newtheorem{theorem}{Theorem}[section]
\newtheorem{lemma}[theorem]{Lemma}
\newtheorem{proposition}[theorem]{Proposition}

\theoremstyle{remark}
\newtheorem{remark}[theorem]{Remark}

\numberwithin{equation}{section}

\newcommand{\R}{\mathbb{R}}
\newcommand{\C}{\mathbb{C}}
\newcommand{\Z}{\mathbb{Z}}
\renewcommand{\H}{\mathbb{H}}

\newcommand{\sym}{\mathrm{sym}}
\newcommand{\GH}{\Gamma \backslash \H}
\def\SL{\mathrm{SL}}

\newcommand{\lr}[1]{\left (   {#1} \right )}

\newcommand{\inprod}[2]{\left \langle  {#1} , {#2} \right \rangle}

\usepackage{geometry} 
\geometry{margin=1.1in} 

\usepackage[stretch=10]{microtype}
\usepackage{xcolor}
\definecolor{dark-red}{rgb}{0.4,0.15,0.15}
\definecolor{dark-blue}{rgb}{0.15,0.15,0.4}
\definecolor{medium-blue}{rgb}{0,0,0.5}
\hypersetup{
	pdftitle={Dissipation},
	pdfauthor={Petru Constantinescu},
	pdfnewwindow=true,
	colorlinks, linkcolor={dark-red},
	citecolor={dark-blue}, urlcolor={medium-blue}
}

\title{Dissipation of correlations of holomorphic cusp forms} 
\author[P. Constantinescu]{Petru Constantinescu}
\address{Max Planck Institute for Mathematics, Vivatsgasse 7, 53111 Bonn, 
Germany}

\email{\href{mailto:constantinescu@mpim-bonn.mpg.de}{constantinescu@mpim-bonn.mpg.de}}

\begin{document}

\maketitle

\begin{abstract}
  We obtain a generalisation of the Quantum Unique Ergodicity for holomorphic cusp forms on $\mathrm{SL}_2(\mathbb{Z}) \backslash \mathbb{H}$ in the weight aspect. We show that correlations of masses coming from off-diagonal terms dissipate as the weight tends to infinity. This corresponds to classifying the possible quantum limits along any sequence of Hecke eigenforms of increasing weight.
  
  Our new ingredient is to incorporate the spectral theory of weight $k$ automorphic functions to the method of Holowinsky–-Soundararajan.  For Holowinsky’s shifted convolution sums approach, we need to develop new bounds for the Fourier coefficients of weight $k$ cusp forms. For Soundararajan’s subconvexity approach, we use Ichino’s formula for evaluating triple product integrals.
\end{abstract}

\section{Introduction}

Mass equidistribution of eigenfunctions is a central topic in quantum chaos and number theory. A foremost example is a famous conjecture of Rudnick and Sarnak \cite{RudSar}, which states that normalised Maa{\ss} cusp forms for the modular surface obey Quantum Unique Ergodicity as the eigenvalue tends to infinity. This means that, if $\phi$ is a Maa{\ss} cusp form of eigenvalue $\lambda$, then the measure $\displaystyle \mu_{\phi}:=|\phi(z)|^2 \frac{dxdy}{y^2} $ approaches the uniform distribution measure $\displaystyle \frac{3}{\pi}\frac{dxdy}{y^2}$ as $\lambda \to \infty$. Lindenstrauss \cite{lindenstrauss} showed that for Hecke--Maa{\ss} forms, the only possible limiting measures are of the form $\displaystyle \frac{3}{\pi}c\frac{dxdy}{y^2}$, with $0 <c \leq 1$, and Soundararajan \cite{SoundQUE} completed the proof of Quantum Unique Ergodicity for Hecke--Maa{\ss} forms, showing that $c=1$. 

 

Fix $\Gamma = \SL_2(\Z)$ and $X=\SL_2(\Z) \backslash \H$ the modular curve. We now state the analogue of Quantum Unique Ergodicity for holomorphic Hecke cusp forms, proved by Holowinsky and Soundararajan \cite{HolSound}. 
\begin{theorem}[Holowinsky--Soundararajan] Let $f$ be a holomorphic Hecke cusp form of weight $k$ that is $L^2$-normalised and let $F_k(z)=y^{k/2} f(z)$. Fix any $\phi$ smooth and bounded on $X$. Then we have
\begin{equation*}
    \int_{X}y^k |f(z)|^2  \phi(z) \frac{dx dy}{y^2} \to \frac{3}{\pi} \int_X \phi(z) \frac{dx dy}{y^2} \quad \text{as} \quad k \to \infty;
\end{equation*}
equivalently, this can be rewritten as
\begin{equation*}
    \inprod{\phi F_k}{F_k} \to \frac{1}{\mathrm{vol}(X)} \inprod{\phi}{1} \quad \text{as} \quad k \to \infty.
\end{equation*}
\end{theorem}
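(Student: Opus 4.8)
The plan is to reduce the statement to a spectral estimate and then attack the resulting off-diagonal terms by two complementary methods. By the spectral decomposition of $L^2(X)$, any smooth bounded $\phi$ can be written as a constant plus a cuspidal part (a combination of Hecke--Maa\ss\ forms $g$) plus a continuous part (an integral of Eisenstein series $E(z,1/2+it)$). Writing $\phi = \frac{\inprod{\phi}{1}}{\mathrm{vol}(X)} + \phi_0$ with $\phi_0 \perp 1$, the $L^2$-normalisation $\inprod{F_k}{F_k}=1$ shows that the constant part contributes exactly $\frac{1}{\mathrm{vol}(X)}\inprod{\phi}{1}$, so the theorem is equivalent to
\[
\inprod{\phi_0 F_k}{F_k} \to 0 \quad (k \to \infty),
\]
and by linearity and density it suffices to treat $\phi = g$ a fixed Hecke--Maa\ss\ cusp form and $\phi$ an incomplete Eisenstein series.

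For the Eisenstein contribution I would unfold. Expressing the incomplete Eisenstein series as a contour integral of $E(z,s)$ and unfolding against $y^k|f(z)|^2$ produces the Rankin--Selberg integral, which factors (up to explicit gamma factors depending on $k$) as $\frac{\zeta(s) L(s,\mathrm{sym}^2 f)}{\zeta(2s)}$. Shifting to the critical line and using convexity-type bounds for $\zeta$ and $L(s,\mathrm{sym}^2 f)$ on $\mathrm{Re}(s)=1$, together with the classical lower bound $L(1,\mathrm{sym}^2 f) \gg 1/\log k$, shows this term is $o(1)$. This is the comparatively soft part of the argument.

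The genuinely hard part is the cuspidal term
\[
\inprod{g F_k}{F_k} = \int_X g(z)\, y^k |f(z)|^2 \, \frac{dx\,dy}{y^2},
\]
and here no single tool suffices, so I would run two independent bounds. For \emph{Soundararajan's subconvexity approach}, Watson--Ichino's triple product formula relates $|\inprod{g F_k}{F_k}|^2$ to the central value $L(1/2,\mathrm{sym}^2 f \times g)\,L(1/2,g)$ divided by $L(1,\mathrm{sym}^2 f)^2 L(1,\mathrm{sym}^2 g)$, with an archimedean factor decaying in $k$; a weak-subconvexity bound for the degree-six $L$-function (saving a power of $\log k$ over convexity) then yields a bound that is small \emph{provided} $L(1,\mathrm{sym}^2 f)$ is not too small. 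For \emph{Holowinsky's shifted convolution approach}, I would insert the Fourier expansions of $g$ and of $|f|^2$ and reduce to shifted convolution sums $\sum_n \lambda_f(n)\lambda_f(n+h)$; sieve bounds control the term by an arithmetic factor that is small \emph{precisely when} the Hecke eigenvalues are small on average, i.e. when $L(1,\mathrm{sym}^2 f)$ is small.

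The decisive point, and the step I expect to be the main obstacle, is that these two bounds are complementary: Soundararajan's is effective when $L(1,\mathrm{sym}^2 f)$ is large, while Holowinsky's is effective when it is small, so the product of the two bounds is $o(1)$ regardless of the size of $L(1,\mathrm{sym}^2 f)$, and hence at least one of them is $o(1)$. Making this dichotomy quantitative requires matching the $\log k$ savings of the two methods against the partial products $\prod_{p \le k}\bigl(1 + 2\lambda_f(p^2)/p + \cdots\bigr)$ that simultaneously govern the shifted-convolution bound and the size of $L(1,\mathrm{sym}^2 f)$; arranging the exponents so that the two regimes overlap for every $f$ is the crux of the proof.
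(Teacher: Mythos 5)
Your overall architecture — spectral decomposition of $\phi$ into a constant, Hecke--Maa{\ss} cusp forms and incomplete Eisenstein series, followed by the dichotomy between Soundararajan's weak subconvexity via the Watson/Ichino formula and Holowinsky's shifted convolution sums according to the size of $L(1,\sym^2 f)$ — is exactly the Holowinsky--Soundararajan strategy that this paper cites for the theorem and mirrors in Sections \ref{identities}--\ref{proofs} for its off-diagonal generalisation. The treatment of the constant, the reduction to $\inprod{\phi_0 F_k}{F_k}\to 0$, and the description of the cuspidal term are all correct.

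There is, however, one genuine gap: your claim that the continuous-spectrum contribution is ``the comparatively soft part'' and is $o(1)$ unconditionally. Unfolding $\inprod{E(\cdot,\tfrac12+it)F_k}{F_k}$ gives, after inserting $|a_f(1)|^2 = 2\pi^2(4\pi)^{k-1}/(\Gamma(k)L(1,\sym^2 f))$ and $\Gamma(k-\tfrac12+it)/\Gamma(k)\ll k^{-1/2}$, a bound of the shape
\begin{equation*}
\frac{k^{-1/2}\,\bigl|L\bigl(\tfrac12+it,\sym^2 f\bigr)\bigr|}{L(1,\sym^2 f)}(1+|t|)^{O(1)} .
\end{equation*}
The analytic conductor of $\sym^2 f$ is $\asymp k^2$, so even the \emph{weak subconvexity} bound only gives $L(\tfrac12+it,\sym^2 f)\ll k^{1/2}(1+|t|)^{O(1)}/(\log k)^{1-\epsilon}$, and the Hoffstein--Lockhart lower bound $L(1,\sym^2 f)\gg (\log k)^{-1}$ exactly cancels that saving: the net estimate is $(\log k)^{\epsilon}$, not $o(1)$ (with convexity alone it is $k^{\epsilon}$). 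So the Eisenstein term must also be fed into the dichotomy: it is small when $L(1,\sym^2 f)$ is not too small, and otherwise the incomplete Eisenstein series has to be run through the shifted-convolution machinery as well, including a separate treatment of its zeroth Fourier mode. This is precisely why the paper's Lemma \ref{eisenstion inner products} carries the denominator $(L(1,\sym^2 f)L(1,\sym^2 g))^{1/2}$ and why Theorem \ref{shifted main} has a part (ii) devoted to incomplete Eisenstein series. A smaller point: ``the product of the two bounds is $o(1)$'' is not literally how the exponents work out (the product can be as large as a positive power of $\log k$ once one inserts the worst-case lower bound for $L(1,\sym^2 f)$); what actually closes the argument is a threshold such as $L(1,\sym^2 f)\gtrless(\log k)^{-\delta}$ chosen so that each bound is $o(1)$ on its own side, as in Case (i)/(ii) of Section \ref{proofs}. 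You correctly flag this exponent-matching as the crux, but the Eisenstein contribution must be included in it.
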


This result has inspired important subsequent work. Nelson generalised their results in the level aspect \cite{nelson} and to compact surfaces \cite{Nelson21}, while Lester, Matom\"{a}ki, Radziwi\l \l \cite{LMR18} study the distribution of holomorphic cusp forms at small scales.


In this paper, we obtain a generalisation to off-diagonal terms, where we consider two different eigencusp forms $f$ and $g$ of weights $k_1$ and $k_2$ respectively. We show that correlations dissipate as $\max(k_1,k_2) \to \infty$. We obtain a result about joint distribution of masses in the context of QUE, a subject with interesting recent results, see for example the work of Brooks \cite{Brooks16} on distribution of off-diagonal Eisenstein series $\inprod{\phi E(\cdot, r)}{E (\cdot, r'')}$ or Brooks--Lindenstrauss \cite{ShiLin} on joint quasimodes of the Laplacian.



Let $k$ be an integer. We denote by $\mathcal{A}_k(\Gamma)$ the space of automorphic functions of weight $k$, that is functions $f:\H \to \C$ which transform as
\begin{equation}
    f(\gamma z)= j_{\gamma}(z)^k f(z), \quad \text{for all }  \gamma \in \Gamma,
\end{equation}
where $\displaystyle j_{\gamma}(z)=\frac{cz+d}{|cz+d|}$ with $\gamma=\begin{pmatrix} * & * \\
c & d \end{pmatrix}$. We denote by $\mathcal{L}_k(X)$ the space of automorphic functions of weight $k$ which are square-integrable. We see that if $f \in S_k(\Gamma)$, then $y^{k/2} f(z) \in \mathcal{L}_k(X)$. 

We have the Maa{\ss} raising and lowering operators
\begin{align*}
    K_k: \mathcal{L}_{k}(X) \to \mathcal{L}_{k+2}(X) \quad \text{and} \quad \Lambda_k:\mathcal{L}_{k}(X) \to \mathcal{L}_{k-2}(X) ,
\end{align*}
which allow us to move between automorphic functions of different weights, see \ref{raise and lower} for definitions. Hence, for even integers $k_1 \leq k_2$, we define the operator
\begin{align*}
    R_{k_1}^{k_2}: \mathcal{L}_{k_1}(X) &\to \mathcal{L}_{k_2}(X), &
    \phi & \mapsto \frac{K_{k_2-2}\dots K_{k_1+2}K_{k_1} \phi}{\|K_{k_2-2}\dots K_{k_1+2}K_{k_1} \phi \|},
\end{align*}
where $\| R_{k_1}^{k_2} \phi \|= 1$. We prove the following theorem.

\begin{theorem}
\label{main}
 Fix any $\phi \in C_b(\GH)$ (a bounded function on on $\GH$). Let $f$ and $g$ be $L^2$-normalised  holomorphic Hecke cusp forms of weights $k_1$ and $k_2$ respectively with $k_1 \leq k_2$. 
 Let
 \begin{equation*}
     \delta_{f=g} = \begin{cases}1, & \text{if } f=g; \\ 0, & \text{otherwise}.
     \end{cases}
 \end{equation*}
 Along any sequences of such $f$ and $g$, we have
\begin{equation*}
        \int_{X}  \phi(z) R_{k_1}^{k_2} \lr{y^{k_1/2} f(z)} y^{k_2/2} \overline{g(z)}  \frac{dx dy}{y^2} \to \delta_{f=g} \frac{3}{\pi}  \int_X \phi(z) \frac{dx dy}{y^2} \quad \text{as} \quad k_2 \to \infty.
\end{equation*}
In other words, if $F_{k_1}(z)=y^{k_1/2}f(z)$ and $G_{k_2}(z)=y^{k_2/2}g(z)$, then
\begin{equation*}
    \inprod{\phi \left (R_{k_1}^{k_2} F_{k_1} \right )}{G_{k_2}} \to \delta_{f=g}\frac{1}{\mathrm{vol}(X)} \inprod{\phi}{1} \quad \text{as} \quad k_2 \to \infty.
\end{equation*}
\end{theorem}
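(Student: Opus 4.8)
The plan is to follow the Holowinsky--Soundararajan strategy, adapted to two forms of possibly different weight, after isolating the main term by representation theory. Recalling that $\mathrm{vol}(X)=\pi/3$, the claimed main term corresponds to the constant component of $\phi$, where the problem reduces to evaluating $\inprod{R_{k_1}^{k_2}F_{k_1}}{G_{k_2}}$. Here I would use that the Maa{\ss} raising operators preserve the automorphic representation generated by a cusp form: since $f$ is holomorphic it is the lowest-weight vector of the cuspidal representation $\pi_f$, and $R_{k_1}^{k_2}F_{k_1}$ is simply the normalised weight-$k_2$ vector inside the same $\pi_f$, while $G_{k_2}$ is the lowest-weight vector of $\pi_g$. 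Distinct Hecke eigenforms generate orthogonal representations and raising preserves each $\pi$, so $\inprod{R_{k_1}^{k_2}F_{k_1}}{G_{k_2}}=\delta_{f=g}$ exactly; when $f=g$ the operator is the identity and the inner product equals $\|F_{k_1}\|^2=1$. This produces precisely the main term, and everything else must be shown to tend to $0$.

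To handle the remaining contributions I would reduce the bounded $\phi$ to a convenient spanning family, as in the mass-equidistribution literature: it suffices to establish the limit when $\phi$ is a fixed Hecke--Maa{\ss} cusp form, where the target value is $0$, and when $\phi$ is an incomplete Eisenstein series, where the constant main term is produced by the representation-theoretic identity above and the orthogonal part must be shown to vanish; the incomplete Eisenstein series also control any escape of mass into the cusp, so that tightness together with convergence on this family yields convergence against all of $C_b(\GH)$. For each such test function I would unfold the integral and insert the Fourier expansions of $G_{k_2}=y^{k_2/2}g$ (with coefficients essentially $\lambda_g(n)$ against the holomorphic Whittaker function) and of the raised form $R_{k_1}^{k_2}F_{k_1}$, whose coefficients are $\lambda_f(n)$ against a weight-$k_2$ Whittaker function arising from repeatedly raising a lowest-weight vector. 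Obtaining sharp bounds for these raised Whittaker factors, uniformly as $k_2\to\infty$, is the first genuinely new point, and it is exactly the promised development of new bounds for Fourier coefficients of weight-$k$ cusp forms.

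With the unfolding in hand I would run the two complementary estimates. The Holowinsky branch reduces each inner product to shifted convolution sums $\sum_{n,h}\lambda_f(n)\overline{\lambda_g(n+h)}\,V(n,h)$ mixing the two forms, controlled by the smallness on average of $|\lambda_f(n)\lambda_g(n+h)|$; this yields a saving of a negative power of $\log k_2$ that is decisive unless $L(1,\sym^2 f)$ or $L(1,\sym^2 g)$ is abnormally large. The Soundararajan branch bounds the same inner products through the triple product formula: via Ichino's formula $|\inprod{\phi(R_{k_1}^{k_2}F_{k_1})}{G_{k_2}}|^2$ is expressed in terms of a central triple-product value $L(1/2,f\times g\times\phi)$ (for cuspidal $\phi$) or a Rankin--Selberg value $L(1/2+it,f\times g)$ (for the Eisenstein contribution), divided by the relevant $\sym^2$ values and multiplied by an archimedean local integral carrying the weight-$k_2$ raising data. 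Weak subconvexity for these $L$-functions then gives decay precisely in the regime where the Holowinsky bound is weakest.

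Finally I would combine the two branches: the Holowinsky estimate is $o(1)$ except when the symmetric-square values are large, while the Soundararajan estimate is $o(1)$ in exactly that range, so their combination forces $\inprod{\phi(R_{k_1}^{k_2}F_{k_1})}{G_{k_2}}\to\delta_{f=g}\,\inprod{\phi}{1}/\mathrm{vol}(X)$ along every sequence of $f$ and $g$. The two places I expect to absorb most of the effort are (i) the uniform control of the raised weight-$k_2$ Whittaker functions, since the raising operators distort the archimedean profile and this distortion must not overwhelm the arithmetic savings in the shifted convolution sums, and (ii) the evaluation of the archimedean local factor in Ichino's formula for the mismatched, raised pair $(R_{k_1}^{k_2}F_{k_1},G_{k_2})$, which determines whether the subconvex saving survives as $k_2\to\infty$.
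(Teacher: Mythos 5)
Your overall architecture --- spectral reduction of $\phi$ to Hecke--Maa{\ss} cusp forms and incomplete Eisenstein series, the identity $\inprod{R_{k_1}^{k_2}F_{k_1}}{G_{k_2}}=\delta_{f=g}$ for the main term, new bounds for raised Whittaker functions, and the two complementary branches (shifted convolution sums versus Ichino plus weak subconvexity) switched on the size of $L(1,\sym^2 f)L(1,\sym^2 g)$ --- is exactly the paper's. But there is a concrete missing idea in the Eisenstein branch when $k_2-k_1\to\infty$. Writing $E(\cdot\,|\,\psi)$ via its Mellin transform and moving the raising operators onto the Eisenstein series by adjointness produces, on the line $s=\tfrac12+it$, the factor
\[
\frac{\Gamma\lr{\alpha+\tfrac12+it}}{\Gamma\lr{\tfrac12+it}}=s(s+1)\cdots(s+\alpha-1),\qquad \alpha=\tfrac{k_2-k_1}{2},
\]
which grows like $|t|^{\alpha}$ in the spectral variable. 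To integrate this against $\Psi\lr{-\tfrac12-it}$ you need decay of $\Psi$ of order $|t|^{-\alpha-2}$ \emph{with constants controlled in terms of $\alpha$}, i.e.\ control of $\|\psi^{(n)}\|_\infty$ uniformly in $n$; for a general smooth cutoff these derivatives can grow arbitrarily fast, so the naive estimate does not close. The paper resolves this by replacing $\psi$ with a mollification $\psi_\epsilon=\psi*\phi_\epsilon$, where $\phi$ is a Denjoy--Carleman bump satisfying $\|\phi^{(n)}\|_\infty\ll n^{(1+\delta)n}$, showing the two inner products differ by $O_\psi(\epsilon)$, and then beating the resulting $(C_\psi\alpha)^{(1+3\delta/2)\alpha}$ against the archimedean decay $\Gamma\lr{\tfrac{k_1+k_2}{2}}^{1/2}\big/\lr{\Gamma(k_2)^{1/2}\Gamma(\alpha)^{1/2}}$ with the choice $\delta=1/12$. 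Your proposal does not address this, and it is the genuinely new step needed for Theorem \ref{main} beyond the fixed-gap case.

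A second, smaller correction of emphasis: in the large-gap regime the decay does not come from subconvexity, local Ichino factors, or shifted convolutions at all. For cuspidal $\phi=u_j$, commuting the raising operators produces the ratio of normalising constants $\beta(k_1,k_2)/\alpha(s_j,k_2-k_1)\ll (k_2\alpha)^{-1/2}$, so $\inprod{u_j\, R_{k_1}^{k_2}F_{k_1}}{G_{k_2}}$ is already power-saving in $k_2$ once $k_2-k_1$ exceeds a fixed $N_\epsilon$; the two-branch Holowinsky--Soundararajan machinery (where the Whittaker bounds cost a factor $2^{k_2-k_1}$ and would otherwise be fatal) is only invoked for bounded $k_2-k_1$. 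Recognising this split --- archimedean factors carry the proof when the weights separate, arithmetic input carries it when they do not --- is the organising principle your plan should make explicit.
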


\begin{remark}
This corresponds to a generalisation of Quantum Unique Ergodicity by classifying the possible quantum limits of Hecke  cusp forms when we project back to the modular surface. That is, along any sequence of holomorphic Hecke eigenforms of increasing weight, we show there are two possible limit points. Moreover, we obtain explicit rates of cancellations and show that the decay is very fast when $k_2-k_1$ large.
\end{remark}

We also consider the case where we do not raise $F_{k_1}$ to weight $k_2$, but rather project into $\mathcal{L}_{k_2-k_1}(X)$. These statements are not the same, since there are extra normalising factors that play an important role.
\begin{theorem}
\label{fixed difference}
Fix $\phi \in C_b(X)$. Let $l$ be a nonnegative even integer.  Let $f$ and $g$ vary along a sequence of Hecke cusp forms of weights $k$ and $k+l$ respectively. Then 
\begin{equation*}
    \int_{\GH} \left (R_0^{l }\phi(z) \right ) y^{k+l/2} f(z) \overline{g(z)} d \mu(z)  \to \delta_{f=g} \frac{3}{\pi}  \int_{\GH} \phi(z) d\mu(z) \quad \text{as} \quad k\to \infty.
\end{equation*}
In other words, we have
\begin{equation*}
    \inprod{ \left  (R_0^{l }\phi \right ) F_{k} }{G_{k+l}} \to \delta_{f=g} \frac{1}{\mathrm{vol}(X)} \inprod{\phi}{1} \quad \text{as} \quad k \to \infty.
\end{equation*}
\end{theorem}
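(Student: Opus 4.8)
My plan is to reduce the statement to a uniform bound on triple products against a fixed Maass form and then run the Holowinsky--Soundararajan dichotomy in the weight aspect. First I would spectrally decompose the weight-$0$ test function as $\phi = \frac{1}{\mathrm{vol}(X)}\inprod{\phi}{1} + \sum_j \inprod{\phi}{u_j} u_j + (\text{Eisenstein})$, where $u_j$ runs over an orthonormal basis of Hecke--Maa\ss{} cusp forms, and then apply the (linear) operator $R_0^{l}$ term by term. The key observation is that the raising operator annihilates the constant once $l\ge 2$, since $K_0 1 = (z-\bar z)\partial_z 1 = 0$; hence for $l\ge 2$ the constant contributes nothing, which is consistent with $\delta_{f=g}=0$ there (equal weights, forcing $f=g$, would require $l=0$). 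For $l=0$ the constant survives, $R_0^0=\mathrm{id}$, and $\inprod{F_k}{G_k}=\delta_{f=g}$ by orthonormality of distinct Hecke eigenforms, producing exactly the main term $\delta_{f=g}\frac{1}{\mathrm{vol}(X)}\inprod{\phi}{1}=\delta_{f=g}\frac{3}{\pi}\int_{\GH}\phi\,d\mu$. Note that for $l\ge 2$ the normalising factor $\|K_{l-2}\cdots K_0\phi\|$ is a fixed, $k$-independent constant, so it cannot affect whether the limit is $0$. For merely $C_b$ (rather than smooth) $\phi$, I would prove the result for smooth $\phi$ first and extend by density, using $\big|\int_{\GH} (R_0^{l}\phi)\, F_k \overline{G_{k+l}}\,d\mu\big|\le \|R_0^{l}\phi\|_\infty$ by Cauchy--Schwarz together with $\|F_k\|=\|G_{k+l}\|=1$, so the underlying functionals have uniformly bounded mass.

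This reduces everything to the crux estimate: for a fixed weight-$l$ Hecke--Maa\ss{} cusp form (or unitary Eisenstein series) $\psi = R_0^{l} u_j$,
\[
\inprod{\psi F_k}{G_{k+l}} = \int_{\GH}\psi\, y^{k+l/2} f\,\overline{g}\,d\mu \longrightarrow 0 \qquad (k\to\infty),
\]
with a rate that decays polynomially in the Laplace eigenvalue of $\psi$, so that the spectral sum over $j$ (and the Eisenstein integral) converges and vanishes. I would establish this by running both arms of the Holowinsky--Soundararajan method. On the subconvexity side I would invoke Ichino's triple product formula to write $|\inprod{\psi F_k}{G_{k+l}}|^2$ as an explicit archimedean factor times the central value $L(\tfrac12, f\times g\times \psi)$ divided by $L(1,\sym^2 f)\,L(1,\sym^2 g)$ and the adjoint $L$-value of $\psi$; a (weak) subconvex bound for this degree-$8$ value in the weight aspect then yields a saving, which suffices unless $L(1,\sym^2 f)$ or $L(1,\sym^2 g)$ is abnormally small. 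On the shifted-convolution side I would unfold against the Fourier expansions of $\psi$, $f$ and $g$, reducing the integral to sums $\sum_n \lambda_f(n)\lambda_g(n+h)$ weighted by Bessel/Whittaker-type transforms, and bound these via Holowinsky's sieve inequality $\sum_{n\le x}|\lambda_f(n)|\ll x(\log x)^{-\delta}$; this arm is good unless $L(1,\sym^2 f)$ is abnormally large. Since the two bounds depend oppositely on $L(1,\sym^2 f)$ (and $L(1,\sym^2 g)$), taking the better of them produces $o(1)$ for every pair $f,g$.

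The step I expect to be the main obstacle is the shifted-convolution arm. Making Holowinsky's approach work here requires the new average bounds on Fourier coefficients of weight-$k$ cusp forms flagged in the abstract, and one must control the weight-dependent archimedean weight functions produced by unfolding against the \emph{raised} form $\psi$ while simultaneously handling two forms of distinct large weights $k$ and $k+l$. Guaranteeing that the sieve bound applies with uniformity in the shift $h$ and in the spectral parameter of $\psi$, and that the resulting oscillatory Bessel transforms do not destroy the logarithmic saving, is the delicate point. A secondary obstacle is pinning down the archimedean local factor in Ichino's formula sharply enough in the weight aspect to ensure the subconvexity saving is genuinely realised rather than cancelled by the local integral. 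Once both arms are in place, combining them through the $L(1,\sym^2)$ dichotomy and summing the crux estimate over the spectrum completes the proof.
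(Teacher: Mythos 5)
Your proposal is correct and follows essentially the same route as the paper: spectrally decompose the test function into (raised) Maa{\ss} cusp forms and Eisenstein-type components, note that only $l=0$ retains a constant/main term, and then run the Holowinsky--Soundararajan dichotomy in $L(1,\sym^2 f)L(1,\sym^2 g)$, with Ichino/Watson plus weak subconvexity on one arm and shifted convolution sums via Holowinsky's sieve (requiring the new Fourier-coefficient bounds for weight-$l$ forms) on the other. The obstacles you flag --- the archimedean local factor in Ichino's formula and the uniform Whittaker/Fourier bounds for raised forms --- are exactly the paper's new technical inputs.
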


\begin{remark}
\label{growth of l}
In Theorem \ref{fixed difference}, we can also allow $l$ to grow with $k$. Our method works if $l \leq   c\log \log k$, where $c<\frac{1}{12 \log 2}$.  
\end{remark}

\begin{remark}
It is crucial for us in Theorem \ref{fixed difference} that $\phi$ is obtained from repeated iterations of raising operators. We expect the statement to hold for all $\phi \in \mathcal{L}_l(X)$. However, to achieve this we would also need to compute inner products of the type $\inprod{(R_m^l F_m) G_k}{H_{k+l}}$, which in representation theory corresponds to a triple integral of three discrete series representations. The local factors of such integrals are difficult to estimate. The local factors of triple product integrals where at least one factor comes from principal series representation (Maa{\ss} forms) were computed by Cheng \cite{Cheng21}. In ongoing joint work with Jana, we use a recent breakthrough from \cite{BJN} to evaluate the local factors of discrete triple product integrals and obtain more general results.
\end{remark}

We use the spectral theory of weight $k$ automorphic functions, which we summarise thoroughly in Section \ref{theory weight k}. We can write a decomposition of $\mathcal{L}_k(X)$ in terms of eigenfunctions of the weight $k$ Laplacian $\Delta_k$. The spectral expansion will involve:
\begin{itemize}
    \item  Hecke Maa{\ss} cusp forms $R_0^k u_j$ raised to weight $k$;
    \item raised holomorphic Hecke cusp forms $R_{l}^k(F_l)$, for $0<l \leq k$;
    \item weight $k$ Eisenstein series $E_k \left (z,\frac{1}{2}+it \right )$.
\end{itemize}
Therefore, it is enough to compute inner products of type $\inprod{\phi F_{k_1}}{G_{k_2}}$ or $\inprod{ \phi R_{k_1}^{k_2} F_{k_1}}{G_{k_2}}$, where $\phi$ appears in the spectral decomposition. We proceed similarly as in the work of Holowinsky \cite{Hol} and Soundararajan \cite{Sound}. Our new ingredient is to incorporate the spectral theory of weight $k$ automorphic functions to their method, which we review in Section \ref{theory weight k}. We have two approaches, depending on the size of 
\begin{equation}
    S(f,g):=L(1, \sym^2 f) L(1, \sym^2g) .
\end{equation}
Firstly, we can compute directly the inner products, using Rankin--Selberg unfolding for the Eisenstein series and Ichino's formula for the Maa{\ss} cusp form case and we use computations of Cheng \cite{Cheng21} for the local factors, see Section \ref{identities}. The formulas will involve central values of $L$-functions, to which we apply the weak subconvexity results of Soundararajan. This will win if $S(f,g)$ is large.

Alternatively, we can expand the inner products in terms of the Fourier expansions. We need bounds for the Fourier coefficients of weight $k$ automorphic forms, which we compute in Section \ref{Fourier bounds}. This approach boils down to bounding shifted convolution sums, where we apply the results of Holowinsky, see Section \ref{holowinsky}. This will win if $S(f,g)$ is sufficiently small. In both approaches it is crucial that holomorphic cusp forms obey the Ramanujan--Petersson conjecture. We put everything together and complete the proofs of Theorems \ref{main} and \ref{fixed difference} in Section \ref{proofs}.


\section*{Acknowledgements}
We would like to thank Peter Humphries for suggesting the problem to us, for many useful conversations and providing help with the reference \cite{Cheng21}. We would like to thank Yiannis Petridis and Valentin Blomer for their encouragement and feedback. This work represents part of the author’s doctoral dissertation
written at University College London. The author is grateful to Max Planck Institute for Mathematics in Bonn for its hospitality and financial support.

\section{Notation}
\label{notation}

We begin by reviewing some properties of $L$-functions. Our main references for this section are \cite[Chapter 5]{IK} and \cite[Chapter 1]{bump}. Let $L(s,f)$ be the Dirichlet series with an Euler product of degree $d$
\begin{equation*}
    L(s,f)=\sum_{n \geq 1} \frac{a_f(n)}{n^s}=\prod_p \prod_{j=1}^d\lr{1-\frac{\alpha_j(p)}{p^s}}^{-1},
\end{equation*}
which is absolutely convergent for $\Re(s)>1$. We write
\begin{equation*}
    L_{\infty}(s,f)=N^{s/2} \prod_{j=1}^m \Gamma_{\R}(s+\mu_j) ,
\end{equation*}
where $\Gamma_{\R}(s)=\pi^{-s/2} \Gamma(s/2)$, $N$ denotes the conductor and $\mu_j \in \C$ are some parameters. We assume we a completed $L$-function with functional equation
\begin{equation*}
    \Lambda(s,f)=L_{\infty}(s,f) L(s,f) =\kappa \Lambda( 1-s,f),
\end{equation*}
where $\kappa$ is a complex number of magnitude 1 (the root number).

If $L(s,f)$ is a $L$-function with this properties, then we define the analytic conductor to be
\begin{equation}
    C(f)=N \prod_{j=1}^d (1+ |\mu_j|).
\end{equation}
Similarly, we define
\begin{equation}
    C(s,f)= N \prod_{j=1}^d (1+ |\mu_j +s|).
\end{equation}
We want to make use of the following result of Soundararajan in \cite{Sound}. In its rough form, if $L(s,f)$ is an $L$-function with the properties above, and additionally satisfies Ramanujan conjectures ($|\alpha_j(p)| \leq 1$, for $1 \leq j \leq d$ and all primes $p$) and $\Re(\mu_j)>-1$, then the following weak-subconvexity bound holds:
\begin{equation*}
    \label{subconvex}
    L\lr{\frac{1}{2}, f} \ll_{\epsilon} \frac{C(f)^{1/4}}{(\log(C(f))^{1-\epsilon}} .
\end{equation*}

Now let $f$ and $g$ be holomorphic Hecke eigenforms of weights $k_1$ and $k_2$ respectively. We assume they are $L^2$-normalised:
\begin{equation*}
\int_{\Gamma \backslash \H} y^{k_1} |f(z)|^2 d \mu(z) =  \int_{\Gamma \backslash \H} y^{k_2} |g(z)|^2 d \mu(z) =1. 
\end{equation*}
Also we denote $F_{k_1}(z)=y^{k_1/2} f(z)$ and $F_{k_2}(z)=y^{k_2/2} f(z)$. We write
\begin{equation*}
     f(z)=\sum_{n \geq 1} a_f(n) e(nz) , \quad g(z)=\sum_{n \geq 1} a_g(n) e(nz),
\end{equation*}
where
\begin{equation*}
    a_f(n)=\lambda_f(n) a_f(1) n^{(k_1 -1)/2}, \quad  a_g(n)=\lambda_g(n) a_g(1) n^{(k_2 -1)/2}.
\end{equation*}
In this form, $\lambda_f(n)$ and $\lambda_g(n)$ are the eigenvalues of the Hecke operators $T_n$. We define
\begin{equation*}
    L(s,f)=\sum_{n \geq 1} \frac{\lambda_f(n)}{n^s}= \prod_p \lr{1-\frac{\alpha_f(p)}{p^s}}^{-1} \lr{1-\frac{\beta_f(p)}{p^s}}^{-1}
\end{equation*}
and similarly for $L(g,s)$. We know that $|\alpha_f(p)|=|\beta_f(p)|=|\alpha_g(p)|=|\beta_g(p)|=1$, for all primes $p$, so the Ramanujan conjecture holds, by the work of Deligne. By definition, we have the factorisation of Hecke polynomials
\begin{align*}
    1-\lambda_f(p) p^{-s} + p ^{-2s} &= \lr{1- \alpha_f(p) p^{-s}} \lr{1- \beta_f(p) p^{-s}}, \\
    1-\lambda_g(p) p^{-s} + p ^{-2s} &= \lr{1- \alpha_g(p) p^{-s}} \lr{1- \beta_g(p) p^{-s}}.
\end{align*}

The gamma factors of $L(s,f) $ are given by
\begin{equation*}
    L_{\infty}(s,f)=\Gamma_{\R}(s+(k_1-1)/2)\Gamma_{\R}(s+(k_1+1)/2).
\end{equation*}
This implies that
\begin{equation*}
    C(f)= \frac{k_1+1}{2}\frac{k_1+3}{2}\asymp k_1^2,
\end{equation*}
and similarly $C(g) \asymp k_2^2$.

Now we define the Rankin--Selberg convolution $L$-function $L(s, f \times g)$ as
\begin{equation*}
    L(f \times g, s) :=\prod_p \lr{1-\frac{\alpha_f(p) \alpha_g(p)}{ p^s}}^{-1} \lr{1-\frac{\alpha_f(p) \beta_g(p)}{ p^s}}^{-1} \lr{1-\frac{\beta_f(p) \alpha_g(p)}{ p^s}}^{-1}\lr{1-\frac{\beta_f(p) \beta_g(p)}{ p^s}}^{-1}  .
\end{equation*}
It admits analytic continuation to all $s \in \C$ and it has a simple pole at $s=1$ if and only if $f=g$. Assume by symmetry that $k_1 \leq k_2$. The Gamma factors are  
\begin{equation}
    \label{gamma1}
    L_{\infty}(s, f \times g)= \Gamma_{\R}\lr{s+\frac{k_1+k_2}{2} }\Gamma_{\R}\lr{s+\frac{k_1+k_2}{2}-1 } \Gamma_{\R}\lr{s+\frac{k_2-k_1}{2} } \Gamma_{\R}\lr{s+\frac{k_2-k_1}{2} +1 } .
\end{equation}
This implies that
\begin{equation}
    \label{C1}
    C(f \times g) \asymp (k_1+k_2)^2(1+k_2-k_1)^2.
\end{equation}

When $f=g$, we define the symmetric square $L$-function
\begin{align*}
    L(s, \sym^2f):= \prod_p \lr{ 1 - \frac{\alpha_f(p)^2}{p^s}}^{-1} \lr{ 1- \frac{\beta_f(p)^2}{p^s}}^{-1} \lr{1-\frac{1}{p^s}}^{-1} 
    &=\frac{1}{\zeta(s)} L(s, f \times f)\ .
\end{align*}

We can write the first Fourier coefficient $a_f(1)$ as
\begin{equation}
\label{af1}
    |a_f(1)|^2= \frac{ 2 \pi^2 (4 \pi)^{k_1-1}  }{  \Gamma (k_1) L(\sym^2f, 1)} \ .
\end{equation}

\section{Spectral theory of weight $k$ automorphic forms}
\label{theory weight k}

We quote \cite[Chapter 4]{DFI02}, \cite[Chapter 2]{bump} for detailed expositions on the analytical theory of weight $k$ automorphic forms. Let $k$ be an integer. We denote by $\mathcal{A}_k(\Gamma)$ the space of automorphic functions of weight $k$, that is functions $f:\H \to \C$ that transform by
\begin{equation}
    f(\gamma z)= j_{\gamma}(z)^k f(z), \quad \text{for all }  \gamma \in \Gamma,
\end{equation}
where $\displaystyle j_{\gamma}(z)=\frac{cz+d}{|cz+d|}$ with $\gamma=\begin{pmatrix} * & * \\
c & d \end{pmatrix}$. Note that we have the cocycle relation
\begin{equation*}
    j_{\gamma_1 \gamma_2}(z)=j_{\gamma_1}(\gamma_2 z) j_{\gamma_2}(z), \quad \text{for all } \gamma_1, \gamma_2 \in \Gamma.
\end{equation*}
Let  $\mathcal{L}_k(\Gamma)$ the automorphic functions of weight $k$ that are square-integrable. On $\mathcal{L}_k(\Gamma)$ we define the inner product
\begin{equation}
    \inprod{f}{g}=\int_{\Gamma \backslash \H} f(z) \overline{g(z)} d \mu .
\end{equation}

We consider the Maa{\ss} raising and lowering operators acting on $C^{\infty}(\H)$ (smooth functions on $\H$)
\begin{align}
\label{raise and lower}
\begin{split}
    K_k=\frac{k}{2}+y \left ( i \pdv{}{x} + \pdv{}{y}\right ) = \frac{k}{2} + (z-\overline{z})\pdv{}{z} ,\\
    \Lambda_k=\frac{k}{2}+y \left ( i \pdv{}{x} - \pdv{}{y}\right ) = \frac{k}{2} + (z-\overline{z})\pdv{}{\overline{z}} .
\end{split}
\end{align}
These operators are used to map between spaces of different weights:
\begin{align*}
    K_k : C^{\infty}(\Gamma) \cap \mathcal{L}_k(\Gamma) \to C^{\infty}(\Gamma) \cap \mathcal{L}_{k+2}(\Gamma), \\ \Lambda_k : C^{\infty}(\Gamma) \cap \mathcal{L}_{k}(\Gamma) \to C^{\infty}(\Gamma) \cap \mathcal{L}_{k-2}(\Gamma), 
\end{align*}
and satisfy the following property:
\begin{align}
\label{adjoint}
    \inprod{K_k f}{g}=-\inprod{f}{\Lambda_{k+2}g},
\end{align}
for $f \in C^{\infty}(\Gamma) \cap \mathcal{L}_k(\Gamma)$ and $g \in C^{\infty}(\Gamma) \cap \mathcal{L}_{k+2}(\Gamma)$. Moreover, the following product rule holds:
\begin{align}
\label{product}
\begin{split}
    K_{k+l}(g_k g_l) &= (K_k g_k)g_l + g_k (K_l g_l), \\
    \Lambda_{k+l}(g_k g_l) &= (\Lambda_k g_k) g_l + g_k (\Lambda_l g_l),
\end{split}
\end{align}
where $g_k$ and $g_l$ are smooth automorphic functions of weights $k$ and $l$ respectively.

The Laplace operator of weight $k$ is defined by
\begin{equation*}
    \Delta_k=y^2 \left ( \pdv[2]{}{x} + \pdv[2]{}{y} \right ) - i k \pdv{}{x} .
\end{equation*}
This can be written in terms of the raising and lowering operators as
\begin{equation}
    \Delta_k=-K_{k-2} \Lambda_k - \lambda(k/2) = -\Lambda_{k+2} K_k - \lambda (-k/2),
\end{equation}
where
\begin{equation}
    \lambda(s):=s(1-s).
\end{equation}
The operator $\Delta_k$ acts on $\mathcal{A}_k(\Gamma) \cap C^{\infty}(\Gamma)$. We define a {\it Maa{\ss} form} to be a smooth automorphic function of weight $k$ which is an eigenfunction of $\Delta_k$. Let $\mathcal{A}_k(\Gamma,s)$ denote the space of Maa{\ss} forms with eigenvalue $\lambda(s)$. We also note that, if $f(z) \in \mathcal{A}_k(\Gamma,s)$ has at most polynomial growth in cusp, it has a Fourier expansion of the form
\begin{equation*}
    f(z)=a_0(y) + \sum_{n \neq 0} a_f(n) W_{\frac{kn}{2|n|} , s-\frac{1}{2}}(4 \pi |n| y) e (nx),
\end{equation*}
where $W_{\alpha, \beta}(z)$ is the Whittaker function, see \cite{DFI02} for more details. 

We denote by $\mathcal{B}_k(\Gamma)$ the space of smooth automorphic functions of weight $k$ such that $f, \Delta_k f \in \mathcal{L}_k(\Gamma)$. Then $-\Delta_k$ defines a symmetric, non-negative operator on $\mathcal{B}_k(\Gamma)$. The space $\mathcal{B}_k(\Gamma)$ is dense in $\mathcal{L}_k(\Gamma)$ and the operator $-\Delta_k$ admits a self-adjoint extension to $\mathcal{L}_k(\Gamma)$ and we can study the spectral decomposition of this space.

\subsection{Eisenstein Series} The Eisenstein series of weight $k$ is defined by
\begin{equation}
\label{eisenstein k}
    E_k(z,s):= \sum_{\gamma \in \Gamma_{\infty} \backslash \Gamma}(\Im \gamma z)^s j_{\gamma}(z)^{-k},
\end{equation}
The series \eqref{eisenstein k} converges absolutely for $\Re(s)>1$ and has analytic continuation to the whole complex plane. Unless $k=0$, $E_k(z,s)$ has no poles for $\Re(s) \geq 1/2$. If $k=0$, then $E(z,s)$ has a pole at $s=1$ with residue 
\begin{equation}
\label{residue_eisenstein}
    \Res_{s=1}E(z,s)= \frac{3}{\pi}.
\end{equation}
If $s$ is not a pole of $E_k(z,s)$, then $E_k(z,s)$ is a weight $k$ Maa{\ss} form with eigenvalue $\lambda(s)$, but it is not in $\mathcal{L}_k(\Gamma)$.
We note that
\begin{align*}
    K_k E_k(z,s) = \left ( \frac{k}{2} + s \right ) E_{k+2}(z,s), \quad
    \Lambda_k E_k(z,s) = \left ( \frac{k}{2} - s \right ) E_{k-2}(z,s).
\end{align*}
Hence, if $k$ is an even positive integer,
\begin{equation}
\label{eisenstein relation}
    K_{k-2}\dots K_2 K_0 E(z,s)=s(s+1) \dots (s+k/2-1) E_{k}(z,s)=\frac{\Gamma(s+k/2)}{\Gamma(s)}E_{k}(z,s).
\end{equation}

As in \cite{Ja94}, \cite{DFI02}, \cite{Du88} or \cite{O18}, the Fourier expansion of $E_k(z,s)$ is given by
\begin{align}
    \label{eisenstein expansion}
    \begin{split}
    E_{k}(z, s) =& y^{s} + \frac{(-1)^{k/2} \Gamma \lr{s}^2}{\Gamma \lr{s-\frac{k}{2}}\Gamma \lr{s+\frac{k}{2}}} \phi(s) y^{1- s} \\
    &+ \frac{(-1)^{k/2} \Gamma \lr{s}}{2 \Gamma \lr{s+\frac{|k|}{2}} \xi(2s)} \sum_{n>0} |n|^{s-1} \sigma_{1-2s}(|n|)W_{|k|/2, s-1/2}(4 \pi |n| y) e (nx) \\
    &+ \frac{(-1)^{k/2} \Gamma \lr{s}}{2 \Gamma \lr{s -\frac{|k|}{2}} \xi(2s)} \sum_{n<0} |n|^{s-1} \sigma_{1-2s}(|n|)W_{-|k|/2, s-1/2}(4 \pi |n| y) e (nx),
    \end{split}
\end{align}
where
\begin{align*}
    \phi(s) &= \frac{\xi(2s-1)}{\xi(2s)}, \\
    \xi(s) &= \pi^{-s/2} \Gamma(s/2) \zeta(s) = \Gamma_{\R}(s) \zeta(s), \\
    \sigma_\nu (n) &= \sum_{d \mid n} d^{\nu} . 
\end{align*}

Let $\psi(y)$ be a smooth compactly supported function on $\R^+$. Then we define the incomplete Eisenstein series
\begin{equation*}
    E_k(z |\psi):= \sum_{\gamma \in \Gamma_{\infty} \backslash \Gamma} \psi (\Im \gamma z) j_{\gamma}(z)^{-k}, 
\end{equation*}
that is in $\mathcal{L}_k(\Gamma)$, but it is not a Maa{\ss} form. We denote by $\mathcal{E}_k(\Gamma)$ the space of all incomplete Eisenstein series. Then $\Delta_k$ acts on $\mathcal{E}_k(\Gamma)$ with purely continuous spectrum which covers the interval $[1/4, \infty)$ with multiplicity one. Moreover, for any $f \in \mathcal{E}_k(\Gamma) $, we have the expansion
\begin{equation*}
    f(z)=\frac{1}{4 \pi} \int_{-\infty}^{\infty} \inprod{f}{E_k \left (\cdot, \frac{1}{2}+it \right )}E_k \left (\cdot, \frac{1}{2}+it \right ) dt .
\end{equation*}

We let 
\begin{equation*}
    \Psi(s):= \int_0^{\infty} \psi(y) y^s \frac{dy}{y}
\end{equation*}
be the Mellin transform of $\psi$. Hence, $\Psi(s)$ is entire and satisfies
\begin{equation}
\label{decay}
    \Psi(s) \ll (1+|s|)^{-A}
\end{equation}
for any $A >0$, uniformly in vertical strips. By the Mellin inversion theorem, we have
\begin{equation*}
    \psi(y) = \frac{1}{2 \pi i } \int_{(\sigma)} y^{-s} \Psi(s) ds 
\end{equation*}
for $\sigma>1$. Using this, we observe that
\begin{equation}
    \label{blabla}
    E_{k}(z | \psi) = \frac{1}{2 \pi i } \int_{(2)} \Psi(-s) E_k(z,s) ds .
\end{equation}

\subsection{Cusp forms} The orthogonal complement of $\mathcal{E}_k(\Gamma)$ in $\mathcal{L}_k(\Gamma)$ consists of functions whose zero Fourier coefficient vanishes, which we denote by $\mathcal{C}_k(\Gamma)$. Then $\Delta_k$ acts on $\mathcal{C}_k(\Gamma)$ with purely discrete spectrum. We now provide a description of this space. 

Let $\mathcal{C}_k(\Gamma,s)$ be the space of Maa{\ss} cusp forms of weight $k$ and eigenvalue $\lambda(s)$. Then $K_k : \mathcal{C}_k(\Gamma,s) \to \mathcal{C}_{k+2} (\Gamma, s) $ and $\Lambda_k: \mathcal{C}_{k} (\Gamma, s) \to \mathcal{C}_{k-2} (\Gamma, s)$. Also,
\begin{align*}
    K_k F = 0 \iff \lambda(s)=\lambda(-k/2) \iff y^{k/2} \overline{f(z)} \text{ is holomorphic in } z, \\ 
     \Lambda_k F = 0 \iff \lambda(s)=\lambda(k/2) \iff y^{-k/2} f(z) \text{ is holomorphic in } z.
\end{align*}
If $\lambda(s)\neq \lambda(-k/2)$, then the map
\begin{equation*}
    \lr{\lambda(s) -\lambda \lr{-\frac{k}{2}}}^{-1/2} K_k: \mathcal{C}_{k} (\Gamma, s) \to \mathcal{C}_{k+2} (\Gamma, s)
\end{equation*}
is a bijective isometry. A similar statement holds for $\Lambda_k$. Now for even integers $k_1 < k_2$ and $\lambda(s) \not \in \{ \lambda(-k_1/2), \dots \lambda (-k_2/2+1)\}$, we define the bijective isometry $R_{k_1}^{k_2} : \mathcal{C}_{k_1} (\Gamma, s) \to \mathcal{C}_{k_2} (\Gamma, s)$ given by
\begin{equation}
    \label{operation}
    R_{k_1}^{k_2} (s):= \prod_{\substack{k_1 \leq l < k_2 \\ l \equiv 2 \text{ mod }2}} \lr{\lambda(s) -\lambda \lr{-\frac{k}{2}}}^{-1/2}  K_{k_2-2} \dots K_{k_1+2}K_{k_1} .
\end{equation}
When $k \geq 0$, the eigenspace of $\Delta_k$ with eigenvalue $\lambda(k/2)$ is given by
\begin{equation}
    \mathcal{C}_{k} \left (\Gamma, \frac{k}{2} \right ) = \left \{ y^{k/2} f(z) \mid f\in S_k(\Gamma)\right \}
\end{equation}
and
\begin{equation}
    \mathcal{C}_{-k} \left (\Gamma, \frac{k}{2} \right ) = \left \{ y^{k/2} \overline{f(z)} \mid f\in S_k(\Gamma)\right \} .
\end{equation}
The eigenspaces of $\Delta_k$ in $\mathcal{C}_{k} \left (\Gamma, m/2 \right )$ for even $m$ in the range $0<m \leq k$ are determined by classical cusp forms in $S_m(\Gamma)$ with repeated applications of the Maa{\ss} raising operators.

Putting everything together, we have the following theorem, see \cite[Corollary 4.4]{DFI02}.

\begin{theorem}
Let $k$ be an even positive integer. Let $\{ u_j (z)\}$ be an orthonormal basis of Maa{\ss} cusp forms of $\mathcal{C}_0(\Gamma)$ with corresponding eigenvalues $\lambda(s_j)$. Also, choose $\{f_{j,m}\}$ an orthonormal basis for $S_m(\Gamma)$. Then an orthonormal basis of $\mathcal{C}_k(\Gamma)$ is given by
\begin{align*}
    u_{j,k}(z) &:= \prod_{0\leq l<k/2} (\lambda(s_j)-\lambda(-l))^{-1/2}K_{2l}(u_j(z)), \\
    u_{j,m,k}(z) &:=\prod_{m \leq l <k/2} \left ( \lambda (m) - \lambda (-l) \right )^{-1/2} K_{2l}\lr{y^m f_{j, 2m}(z)} .
\end{align*}
\end{theorem}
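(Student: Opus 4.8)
The plan is to establish two things: that every function in the two listed families is a unit-norm element of $\mathcal{C}_k(\Gamma)$ lying in a single $\Delta_k$-eigenspace, and that together they span $\mathcal{C}_k(\Gamma)$. The only inputs I would use are already in hand: the kernel characterisations of $K_k$ and $\Lambda_k$, the fact that the normalised raising (resp. lowering) operators are bijective isometries between eigenspaces of equal spectral parameter, the self-adjointness of $\Delta_k$ (so eigenspaces with distinct eigenvalues are orthogonal), and the purely discrete spectrum of $\Delta_k$ on $\mathcal{C}_k(\Gamma)$, which reduces completeness to checking it on each eigenspace $\mathcal{C}_k(\Gamma,s)$.

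First I would verify orthonormality. Since $K_{2l}$ preserves both the spectral parameter and cuspidality, $u_{j,k}\in\mathcal{C}_k(\Gamma,s_j)$ and $u_{j,m,k}\in\mathcal{C}_k(\Gamma,m)$, and the normalising denominators are nonzero because $\lambda(s_j)-\lambda(-l)>0$ (as $\lambda(s_j)\geq 1/4>0\geq\lambda(-l)$) and $\lambda(m)\neq\lambda(-l)$ for $l\geq m\geq 1$ (using $\lambda(a)=\lambda(b)\iff a=b$ or $a+b=1$). As each normalised $K_{2l}$ is a bijective isometry, so is the composite $R_0^k$ (resp. $R_{2m}^k$), whence it carries the orthonormal sets $\{u_j\}$ and $\{y^m f_{j,2m}\}$ to orthonormal sets inside a fixed eigenspace (noting that the weight-$2m$ $L^2$ norm of $y^m f$ equals the Petersson norm of $f$). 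Orthogonality across distinct eigenvalues is automatic from self-adjointness of $\Delta_k$: the parameters $\lambda(s_j)\geq 1/4$ are strictly positive, the $\lambda(m)=m(1-m)\leq 0$ are pairwise distinct for distinct $m\geq 1$, and these two ranges are disjoint, which simultaneously settles orthogonality within each family, across the two families, and across different $m$.

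Next I would establish completeness by a descent argument. Given a nonzero eigenform $F\in\mathcal{C}_k(\Gamma,s)$, I would apply the normalised lowering operators successively, dropping the weight by $2$ at each stage. At weight $2l$ the lowering operator annihilates its argument precisely when $\lambda(s)=\lambda(l)$, so two cases occur. If $\lambda(s)$ is never of this form for $1\leq l\leq k/2$ — in particular whenever $\lambda(s)\geq 1/4$ — the descent reaches weight $0$ and lands in $\mathcal{C}_0(\Gamma,s)$, spanned by those $u_j$ with $\lambda(s_j)=\lambda(s)$. Otherwise the descent first meets the kernel at some weight $2m$, where necessarily $\lambda(s)=\lambda(m)$ with $m$ a positive integer (and $m\leq k/2$); the resulting lowest-weight vector lies in $\mathcal{C}_{2m}(\Gamma,m)=\{\,y^m f : f\in S_{2m}(\Gamma)\,\}$, spanned by the $y^m f_{j,2m}$. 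Since one step of normalised lowering followed by one step of normalised raising is a positive multiple of the identity that the normalisation turns into the identity, raising the descended vector recovers $F$ exactly. Hence $F$ is a combination of the $u_{j,k}$ in the first case and of the $u_{j,m,k}$ in the second, so the two families span $\mathcal{C}_k(\Gamma,s)$ for every $s$, and therefore all of $\mathcal{C}_k(\Gamma)$.

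The point needing the most care is the dichotomy in the descent: one must confirm that the lowering never meets the kernel prematurely (so the descended vector is genuinely nonzero) and that termination forces an integral discrete-series parameter $m$ with $S_{2m}(\Gamma)\neq 0$. Both follow from the exact characterisation $\Lambda_{2l}G=0\iff\lambda(s)=\lambda(l)$ together with the elementary arithmetic of $\lambda$, since for $l>m\geq 1$ one has $\lambda(l)\neq\lambda(m)$. Everything else is bookkeeping: the genuinely analytic facts — the isometry property of the normalised intertwiners and the discreteness of the spectrum on $\mathcal{C}_k(\Gamma)$ — are already available, so no new estimates are required.
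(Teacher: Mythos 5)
Your proposal is correct, and it is essentially the standard argument: the paper itself gives no proof of this statement, deferring to \cite[Corollary 4.4]{DFI02}, and the proof there is exactly your two-step scheme (orthonormality via the normalised raising operators being isometries between eigenspaces plus self-adjointness of $\Delta_k$ for distinct eigenvalues, completeness via descent with $\Lambda_{2l}$ and the kernel dichotomy $\Lambda_{2l}G=0\iff\lambda(s)=\lambda(l)$, which forces either arrival at weight $0$ or a holomorphic lowest-weight vector $y^m f$ with $f\in S_{2m}(\Gamma)$). No gaps; the only point deserving explicit mention is the one you already flag, namely that the descent is injective until it meets the kernel, so the descended vector is nonzero and raising it back recovers the original eigenfunction.
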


\begin{remark}
Since Selberg's eigenvalue conjecture holds for $\Gamma=\mathrm{SL}_2(\Z)$, all the points $s_j$ are on the line $\Re(s_j)=1/2$.
\end{remark}

\begin{remark}
We choose an orthonormal basis of $\mathcal{C}_k(\Gamma)$ consisting of Hecke--Maa{\ss} cusp forms, i.e. common eigenfunctions of the Laplacian $\Delta_k$ and all Hecke operators $T_n$. This is possible because the operators $T_n$ commute with $\Delta_k$, $K_k$ and $\Lambda_k$. We denote such a basis by
\begin{equation}
    B_k:=\{ u_{j,k}\}\cup \lr{ \bigcup_{0<m \leq k/2} \{ u_{j,m,k}\} }.
\end{equation}
\end{remark}
We can compute the normalisation factors, as in \cite[p. 508]{DFI02}. They are given by
\begin{align}
    \alpha^2(s,k) &:=  \prod_{0\leq l<k/2} (\lambda(s_j)-\lambda(-l))^{-1} = (-1)^{k/2}\frac{\Gamma(s-k/2)}{\Gamma(s+k/2)} , \label{alpha} \\
    \beta^2(m,k) &:= \prod_{m/2 \leq l <k/2} \left ( \lambda (m) - \lambda (-l) \right )^{-1}=\frac{\Gamma(m)}{\Gamma \lr{\frac{k+m}{2}} \Gamma 
    \lr{ \frac{k-m}{2} +1}}. \label{beta}
\end{align}

If $f\in S_{k_1}(\Gamma)$ and $F_{k_1}=y^{k_1/2} f(z) \in \mathcal{C}_{k_1}(\Gamma, k_1/2)$, we just denote the isometry $R_{k_1}^{k_2}(k_1/2)$ from \eqref{operation} by $R_{k_1}^{k_2}: \mathcal{C}_{k_1}(\Gamma, k_1/2) \to \mathcal{C}_{k_2}(\Gamma, k_1/2) $ given by
\begin{equation}
\label{operator}
    R_{k_1}^{k_2} F_{k_1} = \beta(k_1, k_2) K_{k_2-2}..\dots K_{k_1} F_{k_1}.
\end{equation}

If $u_j$ is a cuspidal Maa{\ss} form with eigenvalue $\lambda(1/2 + i t_j)$, then its Fourier expansion is given by
\begin{equation*}
    u_j(z) = \sum_{n \neq 0} \frac{c_j(|n|)}{\sqrt{|n|}} W_{0, i t_j} (4 \pi |n| y) e (nx).
\end{equation*}
If $u_j$ is a Hecke eigenform, then the Hecke eigenvalues are given by $c_j(n)/c_j(1)$, for positive $n$. We can relate it to the Fourier expansion of $u_{j,k}$, as in \cite{Ja94}:
\begin{align}
\label{fourier maass cusp k}
    \begin{split}
        u_{j,k}(z)= & \frac{(-1)^{k/2} \Gamma(1/2+ it_j)}{\Gamma \lr{\frac{1}{2}+ \frac{k}{2}+ i t_j}} \sum_{n>0}  \frac{c_j(|n|)}{\sqrt{|n|}} W_{k/2, i t_j} (4 \pi |n| y) e (nx) \\
        &+  \frac{(-1)^{k/2} \Gamma(1/2+ it_j)}{\Gamma \lr{\frac{1}{2}- \frac{k}{2}+ i t_j}} \sum_{n<0}  \frac{c_j(|n|)}{\sqrt{|n|}} W_{-k/2, i t_j} (4 \pi |n| y) e (nx) .
    \end{split}
\end{align}
Now, if $f(z) \in S_{k_1}(\Gamma)$ has Fourier expansion
\begin{equation*}
    f(z)= a_f(1) \sum_{n=1}^{\infty} \lambda_f(n) n^{\frac{k_1-1}{2}} e(nz),
\end{equation*}
then we have the expansion
\begin{equation}
\label{fourier raised cusp}
    R_{k_1}^{k_2}(F_{k_1}(z)) = (-1)^{\frac{k_2-k_1}{2}} \beta(k_1,k_2) a_f(1) \sum_{n=1}^{\infty} \frac{\lambda_f(n)}{\sqrt{n}}W_{\frac{k_2}{2}, \frac{k_1-1}{2}}(4 \pi n y) e (nx),
\end{equation}
where $F_{k_1}(z)=y^{k_1/2} f(z)$ as above.

\section{Integral triple product identities}
\label{identities}

Fix $f$ and $g$ holomorphic cusp forms of weights $k_1$ and $k_2$ respectively with $k_1 \leq k_2$. Denote $F_{k_1}=y^{k_1/2} f(z)$ and $G_{k_2}(z)=y^{k_2/2} g(z)$. In this section we evaluate the inner products $\inprod{\phi F_{k_1}}{G_{k_2}}$, where $\phi$ is an automorphic form of weight $k_2-k_1$. If $\phi$ is an Eisenstein series, we use the classical Rankin--Selberg integral method. If $\phi$ is a cusp form, we evaluate the triple product integral using Ichino's formula \cite{Ichino08}. In both cases, it boils down to estimating central values $L(f \times g, 1/2)$ or $L(\phi \times f \times g, 1/2)$, to which we apply the  subconvexity bounds of Soundararajan from \cite{Sound}.

We begin with the following proposition, which uses the Rankin--Selberg unfolding, see \cite[Proposition 13.1]{Iw2}.
\begin{proposition}
\label{noname}
We have
\begin{align*}
  (4 \pi)^{1-s-\frac{k_1+k_2}{2}} \Gamma \lr{s+\frac{k_1+k_2}{2}-1}   a_f(1) \overline{a_g(1)}\frac{L(f \times g, s)}{\zeta(2s)} =    \int_X y^{(k_1+k_2)/2} f(z) \overline{g(z)} E_{k_2-k_1}(z,s) d \mu  .
\end{align*}
\end{proposition}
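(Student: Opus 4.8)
The plan is to unfold the Eisenstein series against the product $y^{(k_1+k_2)/2}f(z)\overline{g(z)}$ and then identify the resulting Dirichlet series with the Rankin--Selberg $L$-function. Throughout I set $k:=k_2-k_1$ (even, $\geq 0$) and $\Phi(z):=y^{(k_1+k_2)/2}f(z)\overline{g(z)}$, and I work in the range $\Re(s)>1$ where the Eisenstein series and all sums converge absolutely (using Deligne's bound $|\lambda_f(n)|,|\lambda_g(n)|\leq d(n)$); both sides of the claimed identity are then meromorphic in $s$, so the equality extends by analytic continuation.

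The first step is to record the transformation behaviour of $\Phi$. For $\gamma=\begin{pmatrix} a & b \\ c & d\end{pmatrix}\in\Gamma$, modularity of $f$ and $g$ gives $f(\gamma z)=(cz+d)^{k_1}f(z)$ and $\overline{g(\gamma z)}=\overline{(cz+d)}^{\,k_2}\overline{g(z)}$, while $\Im(\gamma z)=y/|cz+d|^2$. Collecting these factors, and using $|cz+d|^2=(cz+d)\overline{(cz+d)}$ together with $\overline{(cz+d)}/(cz+d)=j_{\gamma}(z)^{-2}$, one finds
\begin{equation*}
    \Phi(\gamma z)=\lr{\frac{\overline{cz+d}}{cz+d}}^{k/2}\Phi(z)=j_{\gamma}(z)^{-k}\Phi(z).
\end{equation*}
Since $E_k(\gamma z,s)=j_{\gamma}(z)^{k}E_k(z,s)$, the product $\Phi(z)E_{k}(z,s)$ is $\Gamma$-invariant, so the integral over $X$ is well defined; here the precise combination $y^{(k_1+k_2)/2}$ and the choice of weight $k_2-k_1$ for the Eisenstein series conspire to make the automorphy factors cancel.

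Next I carry out the unfolding. In the $\gamma$-term $(\Im\gamma z)^{s}j_{\gamma}(z)^{-k}\Phi(z)$ of $\Phi\cdot E_k$, substituting $\Phi(z)=j_{\gamma}(z)^{k}\Phi(\gamma z)$ removes the automorphy factor and leaves $(\Im\gamma z)^{s}\Phi(\gamma z)=p(\gamma z)$, where $p(w):=(\Im w)^{s}\Phi(w)$. Because $j_{\gamma}(z)=1$ for $\gamma\in\Gamma_{\infty}$ (upper triangular) and $\Im$ is translation-invariant, $p$ is $\Gamma_{\infty}$-invariant, so the standard Eisenstein unfolding yields
\begin{equation*}
    \int_{X}\Phi(z)E_{k}(z,s)\,d\mu=\int_{\Gamma_{\infty}\backslash\H}p(z)\,d\mu=\int_{0}^{\infty}\int_{0}^{1} y^{s+\frac{k_1+k_2}{2}}f(z)\overline{g(z)}\,\frac{dx\,dy}{y^{2}}.
\end{equation*}
Inserting the Fourier expansions $f(z)=\sum_{n}a_f(n)e(nz)$ and $\overline{g(z)}=\sum_{m}\overline{a_g(m)}\,\overline{e(mz)}$, the $x$-integral kills all terms with $n\neq m$, and the remaining $y$-integral is the Gamma integral $\int_{0}^{\infty}y^{s+\frac{k_1+k_2}{2}-2}e^{-4\pi n y}\,dy=\Gamma\lr{s+\frac{k_1+k_2}{2}-1}(4\pi n)^{1-s-\frac{k_1+k_2}{2}}$.

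Finally I collect the arithmetic factors. Using $a_f(n)\overline{a_g(n)}=a_f(1)\overline{a_g(1)}\,\lambda_f(n)\lambda_g(n)\,n^{\frac{k_1+k_2}{2}-1}$, the powers of $n$ cancel and the Dirichlet series becomes $a_f(1)\overline{a_g(1)}\sum_{n\geq 1}\lambda_f(n)\lambda_g(n)n^{-s}$, which by the Hecke relations and the four-factor Euler product defining $L(f\times g,s)$ equals $a_f(1)\overline{a_g(1)}\,L(f\times g,s)/\zeta(2s)$. Assembling the Gamma factor, the power of $4\pi$, and this identity reproduces exactly the stated formula. The computation is essentially routine once set up; the only genuinely delicate points are getting the automorphy factor right so that the $j_{\gamma}$-powers cancel in the unfolding, and the bookkeeping converting $\sum\lambda_f(n)\lambda_g(n)n^{-s}$ into $L(f\times g,s)/\zeta(2s)$, so I would expect the transformation-law computation to be where care is most needed.
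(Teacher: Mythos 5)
Your proof is correct and follows essentially the same route as the paper: unfold the weight-$(k_2-k_1)$ Eisenstein series against $y^{(k_1+k_2)/2}f\overline{g}$, extract the diagonal from the Fourier expansions, evaluate the Gamma integral, and identify $\sum\lambda_f(n)\lambda_g(n)n^{-s}$ with $L(f\times g,s)/\zeta(2s)$ via the Hecke relations. The only difference is that you spell out the cancellation of automorphy factors and the final Euler-product identification, which the paper leaves implicit.
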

\begin{proof}
Using an unfolding argument, for $\Re(s)>1$ we write the integral as
\begin{align*}
    \int_{\Gamma \backslash \H} F_{k_1}(z) \overline{G_{k_2}(z)}   & E_{k_2-k_1} (z)   d\mu = \int_{\Gamma \backslash \H} F_{k_1}(z) \overline{G_{k_2}(z)} \sum_{\gamma \in \Gamma_{\infty} \backslash \Gamma}(\Im \gamma z)^s j_{\gamma}(z)^{-(k_2-k_1)} d\mu \\
    &=  \sum_{\gamma \in \Gamma_{\infty} \backslash \Gamma} \int_{\Gamma \backslash \H} (\Im \gamma z)^s F_{k_1}(z) \overline{G_{k_2}(z)} j_{\gamma}(z)^{k_1-k_2} d\mu \\&= \int_{0}^1 \int_0^\infty y^s y^{\frac{k_1+k_2}{2}}  f(z) \overline{g(z)} dy dx \\ &= \int_{0}^1 \int_0^\infty  y^{s+\frac{k_1+k_2}{2}}  \sum_{n,m \geq 1} a_f(n) \overline{a_g(n)} e^{2 \pi i (n-m) x} e^{-2 \pi (n+m) y} dy dx \\&= a_f(1) a_g(1) \sum_{n \geq 1}  \lambda_f(n) \lambda_g(n) n^{\frac{k_1+k_2}{2}-1} \int_{0}^{\infty} y^{s+\frac{k_1+k_2}{2}} e^{-4 \pi n y} dy \\
    &=( 4 \pi) ^{1-s-\frac{k_1+k_2}{2}} \Gamma \lr{s+\frac{k_1+k_2}{2}-1}   a_f(1) \overline{a_g(1)} \sum_{n \geq 1} \lambda_f(n) \lambda_g(n) n^{-s} .
\end{align*}
\end{proof}

We now write the inner products involving the Eisenstein series.
\begin{lemma}
\label{eisenstion inner products}
Let $s=\frac{1}{2}+it$ and $\alpha=\frac{k_2-k_1}{2}$. Then
\begin{equation*}
     \left |\inprod{E_{k_2-k_1} \lr{\cdot,  s} F_{k_1}}{G_{k_2}} \right | \ll_{\epsilon} \frac{(1+|t|)^{3/2}(\log k_2)^{-1+\epsilon} (1+\alpha)^{1/2}}{ (L(1, \sym^2 f) L(1, \sym^2 g))^{1/2}}.
\end{equation*}
and, for $k_1 < k_2$,
\begin{equation*}
    \left |\inprod{E \lr{\cdot,  s} R_{k_1}^{k_2} F_{k_1}}{G_{k_2}} \right | \ll_{\epsilon} \frac{\Gamma\lr{k_2-\alpha }^{1/2} }{\Gamma(k_2)^{1/2}  \Gamma\lr{\alpha}^{1/2}} k_2^{{\epsilon}} (1+|t|)^{3/2}\left | s (s+1) \dots \lr{s+\alpha-1}\right | .
\end{equation*}
\end{lemma}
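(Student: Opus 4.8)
The plan is to reduce both inner products to the Rankin--Selberg $L$-value $L(f\times g,s)$ by an unfolding computation, and then to estimate the resulting explicit expressions by Stirling's formula together with Soundararajan's weak subconvexity bound from \cite{Sound} and the conductor estimate \eqref{C1}.

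For the first bound I would start directly from Proposition \ref{noname}, which already identifies $\inprod{E_{k_2-k_1}(\cdot,s)F_{k_1}}{G_{k_2}}$ with its right-hand side (note $E_{k_2-k_1}=E_{2\alpha}$). Setting $s=\tfrac12+it$ and taking absolute values, I would substitute \eqref{af1} for $|a_f(1)|^2$ and $|a_g(1)|^2$: the powers of $4\pi$ collapse to the harmless constant $(4\pi)^{-1/2}$, and the symmetric-square values move to the denominator, producing $(L(1,\sym^2f)L(1,\sym^2g))^{-1/2}$. What remains is the Gamma ratio $|\Gamma(\tfrac{k_1+k_2}{2}-\tfrac12+it)|/(\Gamma(k_1)^{1/2}\Gamma(k_2)^{1/2})$ and the quotient $L(f\times g,\tfrac12+it)/\zeta(1+2it)$. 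I would bound the Gamma ratio by Stirling: writing $x=k_1/(k_1+k_2)\in(0,\tfrac12]$, its leading exponential term is governed by the binary entropy $H(x)$, and the inequality $H(x)\le\log 2$ (equality iff $k_1=k_2$) yields $\ll(k_1+k_2)^{-1/2}$ uniformly, with genuine exponential decay once $k_2-k_1$ is large -- this is the mechanism behind the fast decay advertised in the introduction. For the $L$-value I would invoke subconvexity with \eqref{C1}, so that $C^{1/4}\asymp(k_1+k_2)^{1/2}(1+\alpha)^{1/2}$ up to the $t$-aspect, and use $1/|\zeta(1+2it)|\ll\log(2+|t|)$. The factor $(k_1+k_2)^{1/2}$ from the conductor cancels the $(k_1+k_2)^{-1/2}$ from the Gamma ratio, leaving $(1+\alpha)^{1/2}(\log k_2)^{-1+\epsilon}$; the power $(1+|t|)^{3/2}$ comes from the $t$-aspect of the analytic conductor of this degree-four $L$-function, and I would not optimise it, since any fixed power of $|t|$ is acceptable in view of the rapid decay \eqref{decay} of $\Psi$.

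For the second bound I cannot quote Proposition \ref{noname} directly and would redo the unfolding with the raised form. Since $E(\cdot,s)$ has weight $0$ and $R_{k_1}^{k_2}F_{k_1}\cdot\overline{G_{k_2}}$ is $\Gamma$-invariant of weight $0$, unfolding collapses the inner product to an integral over $\Gamma_\infty\backslash\H$. Inserting the Fourier expansion \eqref{fourier raised cusp} of $R_{k_1}^{k_2}F_{k_1}$ and the expansion of $g$, the $x$-integration forces the diagonal and the $y$-integration is a Whittaker--Mellin integral $\int_0^\infty u^{S-1}W_{\kappa,\mu}(u)e^{-u/2}\,du$. Because the exponential parameter is exactly half the Whittaker argument, this evaluates to $\Gamma(S+\mu+\tfrac12)\Gamma(S-\mu+\tfrac12)/\Gamma(S-\kappa+1)$; with $\kappa=k_2/2$, $\mu=(k_1-1)/2$ and $S=s+k_2/2-1$ this is $\Gamma(s+\tfrac{k_1+k_2}{2}-1)\Gamma(s+\alpha)/\Gamma(s)$, while the Dirichlet series collapses to $L(f\times g,s)/\zeta(2s)$. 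Substituting $\beta(k_1,k_2)$ from \eqref{beta} and $|a_f(1)|,|a_g(1)|$ from \eqref{af1}, the factor $\Gamma(s+\alpha)/\Gamma(s)=s(s+1)\cdots(s+\alpha-1)$ appears explicitly, and after Stirling the remaining Gamma factors organise into $\Gamma(k_2-\alpha)^{1/2}/(\Gamma(k_2)^{1/2}\Gamma(\alpha)^{1/2})$, using $k_2-\alpha=\tfrac{k_1+k_2}{2}$.

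The delicate point -- the step I would check most carefully -- is the bookkeeping of the powers of $k_2$ and $\alpha$ in the second bound. The subconvex bound for $L(f\times g,\tfrac12+it)$ contributes $k_2^{1/2}$, while the Whittaker evaluation together with $\beta(k_1,k_2)$ contributes a compensating $k_2^{-1/2}$ (from $\Gamma(\tfrac{k_1+k_2}{2}-\tfrac12+it)/\Gamma(\tfrac{k_1+k_2}{2})\asymp(k_1+k_2)^{-1/2}$); these cancel, so only $k_2^{\epsilon}$ survives, absorbing the logarithm from subconvexity and the standard bounds $(\log k_2)^{-1}\ll L(1,\sym^2)\ll\log k_2$. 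Similarly the $(1+\alpha)^{1/2}$ from the conductor merges with $\Gamma(\alpha+1)^{-1/2}=\alpha^{-1/2}\Gamma(\alpha)^{-1/2}$ to yield the clean $\Gamma(\alpha)^{-1/2}$. I expect the main obstacle throughout to be ensuring these Stirling estimates are genuinely uniform over the full range $0\le\alpha<k_2/2$, in particular in the regime where $\alpha$ is comparable to $k_2$, where the entropy inequality must be used quantitatively rather than merely as $H(x)\le\log 2$.
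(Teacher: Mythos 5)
Your proposal is correct. The first bound is obtained exactly as in the paper: Proposition \ref{noname} plus \eqref{af1}, Soundararajan's weak subconvexity with the conductor \eqref{C1}, Stirling, and the observation that $\Gamma\lr{\tfrac{k_1+k_2}{2}}/(\Gamma(k_1)^{1/2}\Gamma(k_2)^{1/2})\leq 1$ --- the paper phrases this as a ratio of binomial coefficients against the central one, which is the same fact as your entropy inequality $H(x)\le\log 2$. For the second bound you take a genuinely different route to the same explicit formula. The paper does not re-unfold: it uses the adjointness \eqref{adjoint} and the product rule \eqref{product} together with $\Lambda_{k_2}G_{k_2}=0$ to throw all the raising operators onto the weight-$0$ Eisenstein series, and then invokes \eqref{eisenstein relation}, $K_{k_2-k_1-2}\cdots K_0E(z,s)=\frac{\Gamma(s+\alpha)}{\Gamma(s)}E_{k_2-k_1}(z,s)$, so that $\inprod{E(\cdot,s)R_{k_1}^{k_2}F_{k_1}}{G_{k_2}}$ is $(-1)^{\alpha}\beta(k_1,k_2)\frac{\Gamma(s+\alpha)}{\Gamma(s)}$ times the inner product already bounded in the first part; the factor $s(s+1)\cdots(s+\alpha-1)$ and the $\Gamma$-bookkeeping with $\beta(k_1,k_2)$ then come out exactly as in your "delicate point" paragraph. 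Your alternative --- unfolding directly against the expansion \eqref{fourier raised cusp} and evaluating $\int_0^\infty e^{-u/2}u^{S-1}W_{\kappa,\mu}(u)\,du=\Gamma(S+\mu+\tfrac12)\Gamma(S-\mu+\tfrac12)/\Gamma(S-\kappa+1)$ with $\kappa=k_2/2$, $\mu=(k_1-1)/2$ --- reproduces precisely $\Gamma\lr{s+\tfrac{k_1+k_2}{2}-1}\Gamma(s+\alpha)/\Gamma(s)$ times $L(f\times g,s)/\zeta(2s)$, so the two computations agree. The paper's argument is softer (no Whittaker--Mellin identity needed, and it reuses Proposition \ref{noname} and \eqref{inner product 1} verbatim), while yours is more computational but self-contained and incidentally confirms the normalisation in \eqref{fourier raised cusp}; the uniformity in $\alpha$ that worries you is harmless in either version because everything reduces to exact $\Gamma$-ratios before any asymptotics are applied.
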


\begin{proof}
We use Proposition \ref{noname} and \eqref{af1} to obtain
\begin{align*}
    \left | \inprod{E_{k_2-k_1} \lr{\frac{1}{2}+it, \cdot} F_{k_1}}{G_{k_2}} \right | = \frac{ \pi^{3/2}\Gamma \lr{\frac{k_2+k_1}{2} - \frac{1}{2} + it} L \lr{\frac{1}{2} + it, f \times g }}{ \zeta(1+2 i t)  \Gamma(k_1)^{1/2} \Gamma(k_2)^{1/2} L(1, \sym^2 f)^{1/2} L(1, \sym^2 g)^{1/2}} .
\end{align*}
We use the weak subconvexity bound of Soundararajan \cite{Sound}:
\begin{equation*}
    \left | L \lr{\frac{1}{2}+it, f \times g}\right | \ll \frac{(k_1+k_2)^{1/2}(1+k_2-k_1)^{1/2}}{(\log k_2)^{1-\epsilon}} (1+|t|).
\end{equation*}
We now use that for $\sigma>0$, $|\Gamma(\sigma +it)| \leq \Gamma(\sigma)$ and employ Stirling formula to deduce that $\Gamma (x +1/2) \sim \Gamma (x) \sqrt{x}$ as $x \to \infty$.  Since $|\zeta(1+it)| \gg 1/\log(1+|t|)$, we obtain 
\begin{equation}
    \label{inner product 1}
     \left | \inprod{E_{k_2-k_1} \lr{\frac{1}{2}+it, \cdot} F_{k_1}}{G_{k_2}} \right | \ll_{\epsilon} \frac{\Gamma \lr{\frac{k_2+k_1}{2}} (1+k_2-k_1)^{\frac{1}{2}} (1+|t|)^{1+\epsilon} } { (\log k_2)^{1-\epsilon}  \Gamma(k_1)^{1/2} \Gamma(k_2)^{1/2} L(1, \sym^2 f)^{1/2} L(1, \sym^2 g)^{1/2}}
\end{equation}
Finally we see that
\begin{equation*}
    \frac{\Gamma\lr{\frac{k_1+k_2}{2}}}{\Gamma(k_1)^{1/2} \Gamma(k_2)^{1/2}} = \frac{{k_1 +k_2 -2 \choose k_1-1}^{1/2}}{{k_1 +k_2 -2 \choose (k_1+k_2)/2-1}^{1/2}} \leq 1.
\end{equation*}
and the first part follows.

Now, for the second part, we use the adjointness property \eqref{adjoint}, the product rule \eqref{product}, together with the fact that $\Lambda_{k_2}G_{k_2}(z)=0$, to see that
\begin{align*}
     & \inprod{E \lr{\frac{1}{2}+it, z}  R_{k_1}^{k_2} F_{k_1}(z)}{G_{k_2}(z)} \\ =& \beta(k_1, k_2) \inprod{E \lr{\frac{1}{2}+it, z} (K_{k_2-2}\dots K_{k_1} F_{k_1}(z)) }{G_{k_2}(z)} \\
    =& (-1)^{\frac{k_2-k_1}{2}}\beta(k_1, k_2) \inprod{\lr{K_{k_2-k_1-2} \dots K_0 E \lr{\frac{1}{2}+it, z}} F_{k_1}(z)}{G_{k_2}(z)} \\
    =& (-1)^{\frac{k_2-k_1}{2}} \frac{\beta(k_1, k_2) \Gamma \lr{\frac{k_2-k_1}{2} + \frac{1}{2} + it}}{\Gamma \lr{\frac{1}{2} + it}} \inprod{E_{k_2-k_1}\lr{\frac{1}{2}+it, z} F_{k_1}(z)}{G_{k_2}(z)} .
\end{align*}

If $k_1=k_2$, then the conclusion follows.  Now assume $k_1 < k_2$. Substituting $\beta(k_1, k_2)$ from \eqref{beta} and using \eqref{inner product 1}, we have that $\left | \inprod{E \lr{\frac{1}{2}+it, \cdot} R_{k_1}^{k_2} F_{k_1}}{G_{k_2}} \right |$ is bounded by
\begin{align*}
     \frac{\Gamma\lr{\frac{k_1+k_2}{2} }^{1/2} }{\Gamma(k_2)^{1/2}  \Gamma\lr{\frac{k_2-k_1}{2}}^{1/2}} \left | \prod_{0 \leq j < \frac{k_2-k_1}{2}} \lr{\frac{1}{2}+j +it}  \right | \frac{(1+|t|)^{1+\epsilon}}{(\log k_2)^{1- \epsilon} S(f,g)^{\frac{1}{2}}}.
\end{align*}
We use the bound $L(1, \sym^2 f) \gg (\log k_1)^{-1}$, see \cite{HoLo94}, and similarly for $g$. Hence the contribution from the last fraction is bounded by $k_2^{\epsilon}$ and the conclusion follows.




\end{proof}

Next, we evaluate the inner products involving Hecke--Maa{\ss} cusp forms.
\begin{lemma}
\label{inner product cusp}
Let $\epsilon>0$. We have 
\begin{equation*}
      \left | \inprod{u_{j,k_2-k_1} F_{k_1}}{G_{k_2}} \right | \ll_{\epsilon} \frac{(1+k_2-k_1)^{1/2}}{(\log k_2)^{1/2-\epsilon} L(1, \sym ^2 f)^{1/2} L(1, \sym^2 g)^{1/2}} .
\end{equation*}
For $N_{\epsilon}$ large depending on $\epsilon$, we have
\begin{align*}
     \left | \inprod{u_j R_{k_1}^{k_2} F_{k_1}}{G_{k_2}} \right | \ll_{\epsilon, t_j} \begin{cases}   \displaystyle \frac{1}{(\log k_2)^{1/2-\epsilon} L(1, \sym ^2 f)^{1/2} L(1, \sym^2 g)^{1/2}} & \text{if } k_2-k_1 \leq N_{\epsilon}; \\ k_2^{-1+\epsilon} \quad & \text{if } k_2-k_1 \geq N_{\epsilon}. \end{cases}
\end{align*}
\end{lemma}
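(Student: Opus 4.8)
The plan is to treat the two displayed bounds in turn, in close parallel with the Eisenstein computation of Lemma~\ref{eisenstion inner products}, but with Rankin--Selberg unfolding replaced by Ichino's triple product formula.

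\textbf{First bound.} The quantity $\inprod{u_{j,k_2-k_1}F_{k_1}}{G_{k_2}}$ is the triple product period $\int_X u_{j,k_2-k_1}(z)F_{k_1}(z)\overline{G_{k_2}(z)}\,d\mu$, whose integrand has weight $(k_2-k_1)+k_1-k_2=0$ and so is a genuine function on $X$. I would apply Ichino's formula \cite{Ichino08} to the three automorphic representations $\pi_1,\pi_2,\pi_3$ generated by $u_j$, $f$, $g$, obtaining
\[
\left|\inprod{u_{j,k_2-k_1}F_{k_1}}{G_{k_2}}\right|^2 \;\asymp\; \frac{L\lr{\tfrac12,u_j\times f\times g}}{L(1,\sym^2 u_j)\,L(1,\sym^2 f)\,L(1,\sym^2 g)}\cdot I_\infty,
\]
where $I_\infty$ is the normalised archimedean local integral attached to the chosen vectors (the raised Maass form of weight $k_2-k_1$ and the two holomorphic forms of weights $k_1,k_2$), the finite local factors being trivial since $\Gamma=\SL_2(\Z)$.

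Then I would bound each ingredient. For the central value I compute the analytic conductor $C(u_j\times f\times g)\asymp (k_1+k_2)^{4}(1+k_2-k_1)^{4}$ from the gamma factors (four parameters of size $\tfrac{k_1+k_2}{2}$ and four of size $\tfrac{k_2-k_1}{2}$, each shifted by $\pm it_j$, obtained by tensoring \eqref{gamma1} with $u_j$) and apply Soundararajan's weak subconvexity \cite{Sound}, so that $L(\tfrac12,u_j\times f\times g)\ll C^{1/4}/(\log C)^{1-\epsilon}$ with $C^{1/4}\asymp (k_1+k_2)(1+k_2-k_1)$. For the archimedean factor $I_\infty$ — a triple integral of one principal series and two discrete series matrix coefficients — I would quote Cheng's computation \cite{Cheng21}; the point, playing the role of the elementary estimate $\Gamma(\tfrac{k_1+k_2}{2})/(\Gamma(k_1)^{1/2}\Gamma(k_2)^{1/2})\le 1$ in the Eisenstein case, is that $I_\infty$ decays like $(k_1+k_2)^{-1}$, so that $C^{1/4}I_\infty\asymp 1+k_2-k_1$. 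Taking the square root of the whole expression then produces the $(\log k_2)^{-1/2+\epsilon}$ saving (half of the subconvex saving, since Ichino squares the period), the $(1+k_2-k_1)^{1/2}$ growth, and the factor $\lr{L(1,\sym^2 f)L(1,\sym^2 g)}^{-1/2}$; the remaining $L(1,\sym^2 u_j)^{-1/2}$ and the $t_j$-dependence of $I_\infty$ are harmless for fixed $u_j$.

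\textbf{Second bound.} Here I would not reprove an identity but reduce to the first bound, exactly as in the second half of Lemma~\ref{eisenstion inner products}. Writing $R_{k_1}^{k_2}F_{k_1}=\beta(k_1,k_2)K_{k_2-2}\cdots K_{k_1}F_{k_1}$ from \eqref{operator} and repeatedly using the adjointness \eqref{adjoint}, the product rule \eqref{product}, and $\Lambda_{k_2}G_{k_2}=0$, each raising operator is transferred from $F_{k_1}$ onto $u_j$ at the cost of a sign, giving
\[
\inprod{u_j\,R_{k_1}^{k_2}F_{k_1}}{G_{k_2}} = (-1)^{\frac{k_2-k_1}{2}}\beta(k_1,k_2)\,\alpha(s_j,k_2-k_1)^{-1}\inprod{u_{j,k_2-k_1}F_{k_1}}{G_{k_2}},
\]
since $K_{k_2-k_1-2}\cdots K_0 u_j=\alpha(s_j,k_2-k_1)^{-1}u_{j,k_2-k_1}$ by \eqref{alpha}. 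Inserting the first bound, it remains to estimate the scalar by Stirling: from \eqref{alpha}--\eqref{beta} its square is $\asymp_{t_j}\Gamma(n)/\prod_{l=0}^{n-1}(k_1+l)$ with $n=\tfrac{k_2-k_1}{2}$. When $k_2-k_1\le N_\epsilon$ this scalar is $O_{\epsilon,t_j}(1)$ and, since then $(1+k_2-k_1)^{1/2}=O_\epsilon(1)$, the first line follows. When $k_2-k_1\ge N_\epsilon$ the rising factorial $\prod_{l=0}^{n-1}(k_1+l)\ge k_1^{n}$ forces very fast decay which, for $N_\epsilon$ large, beats all other factors (including $S(f,g)^{-1/2}\ll k_2^\epsilon$) and yields $k_2^{-1+\epsilon}$.

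\textbf{Main obstacle.} The crux is the archimedean integral $I_\infty$ in the first bound: unlike the Eisenstein case, where unfolding produces an explicit gamma-factor quotient, one must evaluate a triple integral mixing a principal series ($u_j$) with two holomorphic discrete series of large, unequal weights, and show that its size $(k_1+k_2)^{-1}$ exactly cancels the large-parameter contribution to the degree-$8$ conductor, leaving the $(1+k_2-k_1)^{1/2}$ dependence. This is precisely the content of Cheng's local computation \cite{Cheng21}, and controlling its asymptotics uniformly in $k_1,k_2$ is the delicate point; everything else is bookkeeping with gamma factors together with the subconvexity and symmetric-square lower bounds already invoked above.
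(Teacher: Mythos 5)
Your overall route coincides with the paper's: Ichino's formula \eqref{cusp triple product} combined with Soundararajan's weak subconvexity for the first bound, and for the second the identity $\inprod{u_j R_{k_1}^{k_2}F_{k_1}}{G_{k_2}} = \pm\,\beta(k_1,k_2)\,\alpha(s_j,k_2-k_1)^{-1}\inprod{u_{j,k_2-k_1}F_{k_1}}{G_{k_2}}$ obtained from \eqref{adjoint}, \eqref{product} and $\Lambda_{k_2}G_{k_2}=0$, followed by the same case split at $N_\epsilon$. One bookkeeping correction to the first part: in the normalisation the paper uses (completed $L$-functions $\Lambda$), the Woodbury--Cheng local factor is $I_\infty^*=2^{k_1-k_2}$, an exponential saving in the weight \emph{difference} only; the polynomial factor $(k_1+k_2)^{-1}$ that cancels the large-parameter part of $C^{1/4}$ is not part of Cheng's computation but comes from the elementary Stirling estimate $|\Gamma(\tfrac{k_1+k_2-1}{2}+it_j)|^2/(\Gamma(k_1)\Gamma(k_2))\ll (k_1+k_2)^{-1}$, obtained by unpacking the archimedean factors of $\Lambda(\tfrac12,u_j\times f\times g)$ against those of $\Lambda(1,\sym^2 f)\Lambda(1,\sym^2 g)$ as in \eqref{peter equation}. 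As stated, your claim that the normalised local integral itself decays like $(k_1+k_2)^{-1}$ is not what is in the literature.

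The genuine gap is in your case $k_2-k_1\ge N_\epsilon$. You correctly identify the square of the transfer scalar as $\asymp_{t_j}\Gamma(n)/\prod_{l=0}^{n-1}(k_1+l)$ with $n=\tfrac{k_2-k_1}{2}$, but the justification that $\prod_{l=0}^{n-1}(k_1+l)\ge k_1^{n}$ ``forces very fast decay beating all other factors'' fails: the numerator $\Gamma(n)$ overwhelms $k_1^{n}$ as soon as $n\gg k_1$, e.g.\ for $k_1=12$ fixed and $k_2\to\infty$ one has $\Gamma(n)/k_1^{n}\gg \lr{(n-1)/(ek_1)}^{n-1}\to\infty$, and this is precisely the regime the second case must cover. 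The repair is already implicit in your own formula: the scalar squared equals $\lr{n\binom{k_1+n-1}{n}}^{-1}$, a reciprocal binomial coefficient, and since $\binom{k_1+n-1}{n}=\binom{k_1+n-1}{k_1-1}$ with $k_1+n-1=\tfrac{k_1+k_2}{2}-1\gg k_2$ and $\min(n,k_1-1)\ge\min(N_\epsilon/2,11)$, one gets $\binom{k_1+n-1}{n}\gg k_2^{2}$ for $N_\epsilon$ large enough. Multiplying by the square of the first bound, absorbing $S(f,g)^{-1}\ll k_2^{\epsilon}$ via $L(1,\sym^2 f)\gg(\log k_1)^{-1}$, and taking square roots then gives $k_2^{-1+\epsilon}$; this is the computation the paper carries out from \eqref{alpha} and \eqref{beta}.
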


\begin{proof}

From Ichino's formula \cite{Ichino08}, we know that
\begin{equation}
\label{cusp triple product}
    \left |  \int_{\GH} u_{j,k_2-k_1}(z) F_{k_1}(z) \overline{G_{k_2}(z)} d \mu(z)\right |^2 = \frac{1}{8} \frac{\Lambda(1/2, u_j \times f \times g)}{\Lambda(1, \sym ^2 u_j) \Lambda(1, \sym ^2 f) \Lambda (1, \sym^2 g)} I_{\infty}^* ,
\end{equation}
where $I_{\infty}^*$ is a certain local integral. When $k_1=k_2$, Watson \cite{Watson} shows that $I_{\infty}^*=1$. For the general case, Woodbury \cite{Woo17} and Cheng \cite{Cheng21} calculated for the real local place and show that $I_{\infty}^*=2^{-k_2+k_1}$.

We have that
\begin{align*}
    \Lambda(s, f \times g \times u_j) = & \prod_{\pm}  \Gamma_{\R}\lr{s+\frac{k_1+k_2}{2} \pm it_j}\Gamma_{\R}\lr{s+\frac{k_1+k_2}{2}-1 \pm it_j}  \\
    & \times \Gamma_{\R}\lr{s+\frac{k_2-k_1}{2} \pm it_j}\Gamma_{\R}\lr{s+\frac{k_2-k_1}{2} +1 \pm ir} L(s, f\times g \times u_j) .
\end{align*}
Then it follows that
\begin{align}
\label{peter equation}
   \left | \inprod{u_{j, k_2-k_1} F_{k_1}}{G_{k_2}} \right |^2 & \ll_{t_j}   \frac{ \Gamma   \lr{\frac{k_1+k_2-1}{2} + i t_j}    \Gamma \lr{\frac{k_1+k_2-1}{2} - i t_j}L(1/2, f \times g \times u_j)}{ \Gamma(k_1) \Gamma(k_2) L(1, \sym^2 f) L(1, \sym^2 g)}   .
\end{align}
We use the weak subconvexity bound \cite{Sound}
    \begin{equation*}
        L \lr{\frac{1}{2}, f \times g \times u_j}\ll_{t_j, \epsilon} \frac{(k_1+k_2)(1+k_2-k_1)}{(\log k_2)^{1-\epsilon}}.
    \end{equation*}
Similarly to the previous proof, we use that for $\sigma \geq 1/2$, we have that $\Gamma(\sigma+1/2) \asymp \sqrt{\sigma} \Gamma(\sigma)$ and $|\Gamma(\sigma+it_j)| \leq \Gamma(\sigma)$. Also, as before, we know that $\Gamma \lr{\frac{k_1+k_2}{2}}^2 \leq \Gamma(k_1) \Gamma(k_2)$ and then we conclude the first part of the lemma.

For the second part, we first note that
\begin{align}
\label{atat}
\begin{split}
    \left | \inprod{u_j R_{k_1}^{k_2} F_{k_1}}{G_{k_2}} \right |^2 &= \beta(k_1,k_2)^2 \left | \inprod{(K_{k_2-k_1-2} \dots K_0 u_j) F_{k_1}}{G_{k_2}}\right |^2 \\
    &= \frac{\beta(k_1,k_2)^2}{\alpha(s_j, k_2-k_1)^2} |\inprod{u_{j, k_2-k_1} F_{k_1}}{G_{k_2}}|^2 .
    \end{split}
\end{align}

Now fix $N_{\epsilon}$ large enough such that $N_{\epsilon}> 1 / \epsilon$ and $\log n < n^{\epsilon}$, for $n \geq N_{\epsilon}$. We treat two separate cases, depending on whether $k_2-k_1$ is smaller or larger than $N_{\epsilon}$.

\begin{enumerate}
    \item If $0\leq k_2-k_1\leq N_{\epsilon}$. Then from definitions of \eqref{alpha} \eqref{beta}, we see that
    \begin{equation*}
        \frac{\beta(k_1,k_2)^2}{\alpha(s_j, k_2-k_1)^2} \ll_{\epsilon, t_j} 1
    \end{equation*}
    and the conclusion follows.

\item If $k_2-k_1 \geq N^{\epsilon}$. For notation simplicity, denote $\alpha=(k_2-k_1)/2$.  We also use the bounds $L(1, \sym^2 f) \gg (\log k_1)^{-1}$ and $L(1, \sym^2 g) \gg (\log k_2)^{-1}$. 
Now, from \eqref{alpha} and \eqref{beta}, we see that
\begin{align*}
    \left |\frac{\beta(k_1,k_2)^2}{\alpha(1/2+i t_j, k_2-k_1)^2} \right | &\ll_{t_j} \frac{\Gamma(k_1)  \Gamma \lr{\alpha +\frac{1}{2}}^2 }{\Gamma{\lr{\frac{k_1+k_2}{2}}} \Gamma \lr{\alpha+1}} = \frac{\Gamma(k_1) \Gamma (\alpha+1)}{\Gamma{\lr{\frac{k_1+k_2}{2}}}} \frac{\Gamma \lr{\alpha+\frac{1}{2}}^2}{\Gamma(\alpha+1)^2} \\ &\ll {\frac{k_1+k_2}{2}-1 \choose \alpha}^{-1} \frac{1}{\alpha} \ll k_2^{-1} \alpha^{-1}
\end{align*}
Now the conclusion follows from \eqref{atat}.
\end{enumerate}
\end{proof}

\section{Bounds for Fourier coefficients}
\label{Fourier bounds}

In order to evaluate Fourier coefficients of automorphic forms of weight $k$, it is useful to define
\begin{equation}
    F(k,t,y):= \frac{W_{k,it}(u)}{\Gamma\lr{\frac{1}{2}+k+it}}+  \frac{W_{-k,it}(u)}{\Gamma\lr{\frac{1}{2}-k+it}}. 
\end{equation}
In \cite{Jakobson_cusp}, Jakobson evaluated this expression as
\begin{equation*}
    F(k,t,y)=2 (-1)^k \sum_{l=0}^k\frac{(-k)_l (k)_l y^l}{(1/2)_l 4^l l!} \frac{W_{0, l+it}(y)}{\Gamma \lr{\frac{1}{2} +l + it}} ,
\end{equation*}
where the Pochhammer symbol $(x)_l$ is defined by
\begin{equation*}
    (x)_l:=x (x+1)\dots (x+l-1) ; \quad (x)_0 = 1.
\end{equation*}

We use the fact that $W_{0,\nu}(y)=\sqrt{(y/\pi)} K_{\nu}(y/2)$. We apply the integral representation  of the $K$-Bessel function \cite[p. 205]{Iw}
\begin{equation*}
    K_{\nu}(y) = \pi^{-1/2} \Gamma \lr{\nu + \frac{1}{2}} \lr{\frac{y}{2}}^{-\nu} \int_{0}^{\infty} (u^2+1)^{-\nu -1/2} \cos (u y) du,
\end{equation*}
which holds for $y>0$ and $\Re(\nu)>-1/2$. From this we obtain 
\begin{align*}
    \frac{y^l W_{0, l+ it} (y)}{\Gamma \lr{\frac{1}{2} +l + it}} &\ll  y^{1/2} \left | \int_{0}^{\infty} (u^2+1)^{-l -1/2 -it} \cos (u y) du \right | \\ &\ll y^{1/2} \lr{\frac{1+l+|t|}{y}}^{A} \lr{1+ \frac{1+|t|}{y}}^{\epsilon},
\end{align*}
for any $\epsilon>0$ and any integer $A \geq 0$. 

Next we note that 
 \begin{align*}
     \left | \frac{(-k)_l (k)_l y^l}{(1/2)_l 4^l l!} \right | = \frac{k}{k+l} {k+l \choose l},
 \end{align*}
hence using the identity
\begin{equation*}
    \sum_{l=0}^m {k+l \choose l} = {k+m+1 \choose m},
\end{equation*} we see that
\begin{equation}
    \label{F(k,t,y) bound}
    F(k,t,y) \ll {4^k} k^{A} \sqrt{y}  \lr{\frac{1+|t|}{y}}^{A} \lr{1+ \frac{1+|t|}{y}}^{\epsilon} .
\end{equation}
 Also, from \cite[B. 36]{Iw2}, we have the asymptotic for large $y$ 
\begin{equation*}
    K_\nu(y)=\lr{\frac{\pi }{2y}}^{1/2} e^{-y} \lr{1 + O \lr{\frac{1+|\nu|^2}{y}}} .
\end{equation*}

Now we are ready to give bounds for the Fourier coefficients of incomplete Eisenstein series.
\begin{lemma}
\label{bound eisenstein}
Let $E_k( z | \psi)$ an incomplete Eisenstein series with Fourier expansion
\begin{equation*}
     E_{k}(z | \psi) = \sum_{n \in \Z} a_n(y) e(nx).
\end{equation*}
Then
\begin{equation*}
    a_0(y) = \delta_{k=0} \frac{3}{\pi} \Psi(-1) + O \lr{\sqrt{y}},
\end{equation*}
and for $n \neq 0$, we have
\begin{equation*}
    a_n(y)+ a_{-n}(y) \ll  {2^k} k^{A} \sqrt{y}\tau(|n|) \lr{\frac{1}{|n| y}}^A \lr{1 + \frac{1}{|n| y}}^{\epsilon},
\end{equation*}
for any $\epsilon>0$ and any integer $A \geq 0$. 
\end{lemma}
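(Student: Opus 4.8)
The plan is to feed the contour representation \eqref{blabla} into the explicit Fourier expansion \eqref{eisenstein expansion} of $E_k(z,s)$, thereby reading off each Fourier coefficient of $E_k(z|\psi)$ as a contour integral in $s$ against $\Psi(-s)$ over $\Re(s)=2$. For $n>0$ the integrand carries the factor $\tfrac{(-1)^{k/2}\Gamma(s)}{2\Gamma(s+|k|/2)\xi(2s)}\,n^{s-1}\sigma_{1-2s}(n)\,W_{|k|/2,\,s-1/2}(4\pi n y)$, and for the $(-n)$-th coefficient the same expression with $\Gamma(s-|k|/2)$ and $W_{-|k|/2,\,s-1/2}$. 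The whole argument then rests on shifting each contour to the critical line $\Re(s)=\tfrac12$; this is legitimate because $\Psi$ decays faster than any polynomial in vertical strips by \eqref{decay}, so the horizontal segments vanish, while every other factor grows at most polynomially in $\Im(s)$ across $\tfrac12\le\Re(s)\le2$.

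For $a_0(y)$ I would shift the constant-term integrand $\Psi(-s)\bigl(y^s+c(s)y^{1-s}\bigr)$, with $c(s)=\tfrac{(-1)^{k/2}\Gamma(s)^2}{\Gamma(s-k/2)\Gamma(s+k/2)}\phi(s)$, to $\Re(s)=\tfrac12$. The only pole in the strip is at $s=1$, and it is present exactly when $k=0$: there $c(s)=\phi(s)$ has residue $\tfrac{3}{\pi}$, consistent with \eqref{residue_eisenstein}, producing the main term $\tfrac{3}{\pi}\Psi(-1)$; for $k\neq0$ the zero of $1/\Gamma(s-k/2)$ at $s=1$ cancels the pole of $\phi$, which is the source of the factor $\delta_{k=0}$. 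On $\Re(s)=\tfrac12$ both $y^s$ and $y^{1-s}$ have modulus $\sqrt{y}$, and the reflection formula gives $|c(\tfrac12+it)|=1$ uniformly in $k$; together with the rapid decay of $\Psi$ this bounds the remaining integral by $O(\sqrt{y})$.

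For the nonzero coefficients the crucial move is to add $a_n(y)$ and $a_{-n}(y)$ before estimating. After shifting both to $\Re(s)=\tfrac12$ and setting $s=\tfrac12+it$, the common prefactor $\tfrac{(-1)^{k/2}\Gamma(s)}{2\xi(2s)}n^{s-1}\sigma_{1-2s}(n)$ factors out and the two Whittaker terms assemble into precisely
\[
\frac{W_{k/2,\,it}(4\pi n y)}{\Gamma\!\left(\tfrac12+\tfrac{k}{2}+it\right)}+\frac{W_{-k/2,\,it}(4\pi n y)}{\Gamma\!\left(\tfrac12-\tfrac{k}{2}+it\right)}=F\!\left(\tfrac{k}{2},\,t,\,4\pi n y\right),
\]
which is exactly the quantity controlled by \eqref{F(k,t,y) bound}; this is why the lemma bounds $a_n+a_{-n}$ rather than the individual coefficients. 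Inserting that bound contributes the factor $4^{k/2}=2^k$ together with $(ny)^{1/2}\bigl(\tfrac{1+|t|}{ny}\bigr)^A$, and the $n^{-1/2}$ coming from $n^{s-1}$ turns this into $\sqrt{y}\,(ny)^{-A}(1+|t|)^A$, the stated shape in $y$ and $|n|$. It then remains to estimate the surviving $t$-integral: one uses $|\sigma_{-2it}(n)|\le\tau(n)$ for the divisor factor, the identity $\tfrac{\Gamma(1/2+it)}{\xi(1+2it)}=\pi^{1/2+it}/\zeta(1+2it)\ll\log(2+|t|)$ for the spectral normalisation, and the elementary inequality $\bigl(1+\tfrac{1+|t|}{ny}\bigr)^\epsilon\le\bigl(1+\tfrac{1}{ny}\bigr)^\epsilon(1+|t|)^\epsilon$ to extract the factor $(1+\tfrac{1}{|n|y})^\epsilon$; the leftover $(1+|t|)^{A+\epsilon}\log(2+|t|)$ is then absorbed into the convergent integral of $\Psi$.

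The expected main obstacle is organisational rather than conceptual: one must arrange the two sign-of-$n$ pieces so that their Whittaker functions fuse into $F(k/2,t,\cdot)$—this is what renders the $k$-dependence explicit and yields the clean $2^k$ factor—and one must keep every estimate uniform in $k$, in particular verifying $|c(\tfrac12+it)|=1$ and controlling the various $\Gamma$-quotients through the contour shift. Beyond that, the proof is a direct application of the precomputed bound \eqref{F(k,t,y) bound} and the super-polynomial decay of $\Psi$.
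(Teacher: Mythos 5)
Your proposal is correct and follows essentially the same route as the paper: substitute the contour representation \eqref{blabla} into the Fourier expansion \eqref{eisenstein expansion}, shift to $\Re(s)=\tfrac12$ picking up the pole at $s=1$ only when $k=0$, and for $n\neq 0$ pair $a_n$ with $a_{-n}$ so that the two Whittaker terms combine into $F(k/2,t,4\pi|n|y)$ and \eqref{F(k,t,y) bound} applies. The only cosmetic difference is that you assert $|c(\tfrac12+it)|=1$ exactly, whereas the Gamma-quotient is merely bounded (asymptotic to $1$ as $|t|\to\infty$), which is all that is needed.
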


\begin{proof}
Using \eqref{blabla} and \eqref{eisenstein expansion}, we note that
\begin{equation*}
    a_0(y) = \frac{1}{2 \pi i} \int_{(\sigma)} \Psi(-s) \lr{y^{s} + \frac{(-1)^{k/2} \Gamma \lr{s}^2}{\Gamma \lr{s-\frac{k}{2}}\Gamma \lr{s+\frac{k}{2}}} \phi(s) y^{1- s}} ds,
\end{equation*}
for some $\sigma>1$. We want to move the line of integration to $\Re(s)=1/2$ and we notice we that encounter a pole at $s=1$ if and only if $k=0$. Using the duplication formula $\Gamma(z) \Gamma(1-z) = \frac{\pi}{\sin \pi z}$ and that $|\Gamma(1/2 + it)|^2 = \frac{\pi}{\cosh{\pi t}}$, we observe that
$$ \left | \frac{\Gamma \lr{\frac{1}{2} + it}^2}{\Gamma \lr{\frac{1}{2} + it +\frac{k}{2} } \Gamma \lr{\frac{1}{2} + it -\frac{k}{2} }} \right | = \left|  \frac{ \displaystyle \frac{\pi}{\cosh \pi t}}{ \displaystyle \frac{\pi}{\sin   \pi \lr{\frac{1}{2} +it + \frac{k}{2} }}}\right | \sim 1  \quad \text{as } |t| \to  \infty. $$
Hence, by \eqref{decay}, we have that
\begin{equation}
    \label{O coeff incomplete Eisen}
    a_0(y) = \delta_{k=0} \frac{3}{\pi} \Psi(-1) + O \lr{\sqrt{y}}.
\end{equation}
Note that, by unfolding, we see that
\begin{equation*}
    \inprod{E_0(z | \psi)}{1}= \int_{-1/2}^{1/2} \int_0^{\infty} \psi(y) \frac{dx dy}{y^2} = \Psi(-1).
\end{equation*}
Similarly, for $n \neq 0$, we have that
\begin{equation*}
    a_n(y)= \frac{1}{2 \pi i} \int_{- \infty}^{\infty}  \Psi \lr{ -\frac{1}{2} - it} \frac{(-1)^{k/2} \Gamma \lr{\frac{1}{2} + it}}{2 \Gamma \lr{\frac{1}{2} +\frac{k}{2} + it} \xi(1 + 2 i t)} |l|^{-\frac{1}{2}} \lr{\sum_{ab=|n|} \lr{\frac{a}{b}}^{it}} W_{\frac{k}{2}, it}(4 \pi |n| y) dt .
\end{equation*}
We easily see that
$$a_n(y) + a_{-n}(y) \ll \tau(|n|) |n|^{-1/2}  \int_{- \infty}^{\infty}  \Psi \lr{ -\frac{1}{2} - it} \frac{ \Gamma \lr{\frac{1}{2} + it}}{ \xi(1 + 2 i t)} F\lr{\frac{k}{2}, t, 4 \pi |n| y} . $$
The conclusion follows from \eqref{F(k,t,y) bound}.
\end{proof}

Next we turn our attention to the Fourier coefficients of Maa{\ss} cusp forms.

\begin{lemma}
\label{bound cusp}
Let $u_{j,k}$ be a Maa{\ss} cusp form as defined in the previous section with eigenvalue $1/4 + t_j^2$. If its  Fourier expansion is given by
$$ u_{j,k}(z) = \sum_{n \in \Z} a_n(y) e(nx), $$ then $a_0(y) = 0$ and for $n\neq 0$, we have that
$$ a_n(y) + a_{-n}(y) \ll 2^k k^{A} \sqrt{y} |c_j(|n|)| \lr{\frac{1 + |t_j|}{|n| y}}^A \lr{1 + \frac{1 + |t_j|}{|n| y}}^{\epsilon} .$$
\end{lemma}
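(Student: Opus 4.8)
The plan is to read the Fourier coefficients directly off the expansion \eqref{fourier maass cusp k} and to observe that the symmetric combination $a_n(y)+a_{-n}(y)$ is exactly the function $F(k/2,t_j,\cdot)$ introduced at the beginning of this section, so that the analytic estimate \eqref{F(k,t,y) bound} does all the work. Since $u_{j,k}$ is a cusp form, \eqref{fourier maass cusp k} contains no $n=0$ term, which gives $a_0(y)=0$ at once.

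For $n>0$, reading off \eqref{fourier maass cusp k} yields
\begin{align*}
    a_n(y) &= \frac{(-1)^{k/2}\Gamma\lr{\tfrac{1}{2}+it_j}}{\Gamma\lr{\tfrac{1}{2}+\tfrac{k}{2}+it_j}}\,\frac{c_j(n)}{\sqrt{n}}\,W_{k/2,it_j}(4\pi n y), \\
    a_{-n}(y) &= \frac{(-1)^{k/2}\Gamma\lr{\tfrac{1}{2}+it_j}}{\Gamma\lr{\tfrac{1}{2}-\tfrac{k}{2}+it_j}}\,\frac{c_j(n)}{\sqrt{n}}\,W_{-k/2,it_j}(4\pi n y),
\end{align*}
where I have used $|{-n}|=n$ for the negative-index term. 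Factoring out $(-1)^{k/2}\Gamma\lr{\tfrac{1}{2}+it_j}\,c_j(n)/\sqrt{n}$, the remaining sum of the two Whittaker ratios is precisely $F\lr{\tfrac{k}{2},t_j,4\pi n y}$ by the definition of $F$.

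It then remains to insert the bound \eqref{F(k,t,y) bound} with the substitutions $k\mapsto k/2$, $t\mapsto t_j$, $y\mapsto 4\pi n y$. This turns the prefactor $4^{k}$ of \eqref{F(k,t,y) bound} into $4^{k/2}=2^k$, replaces $k^A$ by $(k/2)^A\ll k^A$, and produces $\sqrt{4\pi n y}\asymp \sqrt{n}\sqrt{y}$; the factor $\sqrt{n}$ cancels the $1/\sqrt{n}$ above and leaves the desired $\sqrt{y}$, while $\lr{\tfrac{1+|t_j|}{4\pi n y}}^A\asymp_{A} \lr{\tfrac{1+|t_j|}{|n|y}}^A$ and likewise for the $\epsilon$-power factor. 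The scalar $\Gamma\lr{\tfrac{1}{2}+it_j}$ is bounded in terms of $t_j$ and is absorbed into the implied constant. Collecting everything gives the stated estimate.

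The argument is essentially mechanical once the key identification is made: the only genuine content is recognising that the two Whittaker contributions to $a_n$ and $a_{-n}$ assemble into $F(k/2,t_j,\cdot)$, after which \eqref{F(k,t,y) bound} applies verbatim. The one point demanding care is the bookkeeping of the powers of $2$ and $k$, in particular that evaluating $F$ at weight parameter $k/2$ converts the $4^k$ of \eqref{F(k,t,y) bound} into the $2^k$ of the statement; the normalising Gamma factors and the $4\pi$ constants only affect implied constants.
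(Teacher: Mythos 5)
Your proof is correct and follows exactly the paper's own argument: the paper likewise reads $a_n(y)+a_{-n}(y)=\Gamma(1/2+it_j)\,c_j(|n|)\,|n|^{-1/2}F(k/2,t_j,4\pi|n|y)$ off the expansion \eqref{fourier maass cusp k} and then applies \eqref{F(k,t,y) bound}. Your additional bookkeeping (the conversion $4^{k/2}=2^k$, the cancellation of $\sqrt{n}$, and the uniform bound $|\Gamma(1/2+it_j)|\le\sqrt{\pi}$) is accurate and only makes explicit what the paper leaves implicit.
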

\begin{proof}
From  \eqref{fourier maass cusp k}, we see that for $n \neq 0$, we have that
\begin{equation*}
    a_n(y) + a_n(-y) = \Gamma (1/2 + i t_j) c_j(|n|) |n|^{-1/2} F(k/2, t_j, 4 \pi |n| y). 
\end{equation*}
Now the conclusion simply follows from \eqref{F(k,t,y) bound}.
\end{proof}

Next we develop a formula for Whittaker functions of the form $W_{\alpha+k, \alpha-\frac{1}{2}}(y)$, which is useful for expressing the Fourier coefficients of $R_{k_1}^{k_2}F_{k_1}$.
\begin{lemma}
\label{whittaker hol cusp}
Let $\alpha>0$  and $k \geq 0$ an integer. Then
\begin{equation*}
    W_{\alpha+k, \alpha-\frac{1}{2}}(y) = e^{-\frac{y}{2}} y^{\alpha} \sum_{l=0}^k y^{k-l} (-1)^l {k \choose l} \frac{\Gamma (2 \alpha +k)}{\Gamma (2 \alpha + k -l)}.
\end{equation*}
In particular, this implies that for $y>0$ and $\alpha \geq 1$, we have
\begin{equation*}
     W_{\alpha+k, \alpha-\frac{1}{2}}(y) \ll 2^k e^{-\frac{y}{2}} y^{\alpha} ((2\alpha+k)^k + y^k).
\end{equation*}
\end{lemma}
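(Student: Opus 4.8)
The plan is to prove the closed form by induction on $k$, exploiting the fact that the functions $W_{\alpha+k,\alpha-\frac12}$ appearing here are exactly those produced by repeatedly raising a holomorphic Fourier mode, as in \eqref{fourier raised cusp}; the stated bound then follows from the binomial theorem. I would first record the base case $k=0$, which is the classical terminating reduction $W_{\alpha,\alpha-\frac12}(y)=e^{-y/2}y^{\alpha}$ (the case $\kappa=\mu+\frac12$), in agreement with the right-hand side since only the $l=0$ summand survives.

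For the inductive step I would use the action of the weight-$2\kappa$ raising operator on a single Fourier mode. A computation identical to those in Section~\ref{theory weight k} shows that $K_{2\kappa}$ sends $W(4\pi|n|y)\,e(nx)\mapsto (\mathcal D_\kappa W)(4\pi|n|y)\,e(nx)$, where, in the Whittaker variable,
\begin{equation*}
  \mathcal D_\kappa := y\frac{d}{dy}-\frac{y}{2}+\kappa .
\end{equation*}
Evaluating $\mathcal D_{\alpha}$ on the base mode $W_{\alpha,\alpha-\frac12}=e^{-y/2}y^{\alpha}$ (or comparing with the confluent hypergeometric representation $W_{\alpha+k,\alpha-\frac12}(y)=e^{-y/2}y^{\alpha}U(-k,2\alpha,y)$) fixes the constant and yields the recurrence
\begin{equation*}
  \mathcal D_{\alpha+k}\,W_{\alpha+k,\alpha-\frac12}=-\,W_{\alpha+k+1,\alpha-\frac12}.
\end{equation*}

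Writing $W_{\alpha+k,\alpha-\frac12}=e^{-y/2}y^{\alpha}P_k(y)$ with $P_k$ the polynomial on the right-hand side, the recurrence above unwinds, after extracting the factor $e^{-y/2}y^{\alpha}$, to the purely polynomial identity
\begin{equation*}
  P_{k+1}(y)=\lr{y-(2\alpha+k)}P_k(y)-y\,P_k'(y).
\end{equation*}
I would verify this by comparing the coefficients of $y^{k+1-l}$ on both sides: the identity reduces to Pascal's rule $\binom{k}{l}+\binom{k}{l-1}=\binom{k+1}{l}$ together with the elementary relation $\Gamma(2\alpha+k+1)=(2\alpha+k)\Gamma(2\alpha+k)$. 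This coefficient bookkeeping, and in particular fixing the overall sign in the recurrence, is the main obstacle; everything else is routine. (Alternatively one can bypass the induction entirely and simply quote the terminating expansion of $U(-k,2\alpha,y)$, whose first parameter is a non-positive integer so that no logarithmic terms arise, and reindex the sum.)

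Finally, for the bound with $\alpha\ge 1$, I would apply the triangle inequality to the closed form and estimate each Gamma ratio trivially, using $2\alpha+k-l\ge 2\alpha>0$:
\begin{equation*}
  \frac{\Gamma(2\alpha+k)}{\Gamma(2\alpha+k-l)}=\prod_{j=1}^{l}(2\alpha+k-j)\le (2\alpha+k)^{l}\qquad (0\le l\le k).
\end{equation*}
Hence
\begin{equation*}
  \lvert W_{\alpha+k,\alpha-\frac12}(y)\rvert \le e^{-y/2}y^{\alpha}\sum_{l=0}^{k}\binom{k}{l}y^{k-l}(2\alpha+k)^{l}=e^{-y/2}y^{\alpha}\lr{y+2\alpha+k}^{k}
\end{equation*}
by the binomial theorem, and $\lr{y+2\alpha+k}^{k}\le 2^{k}\lr{y^{k}+(2\alpha+k)^{k}}$ gives the claimed estimate.
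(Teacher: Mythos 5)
Your proof is correct and takes essentially the same route as the paper: induction on $k$ via the Whittaker recurrence $W_{\lambda+1,\mu}=\lr{\tfrac{y}{2}-\lambda}W_{\lambda,\mu}-yW'_{\lambda,\mu}$ (which you rederive from the raising operator rather than citing it from Gradshteyn--Ryzhik), reduction to the polynomial recurrence $P_{k+1}(y)=\lr{y-(2\alpha+k)}P_k(y)-yP_k'(y)$, and a coefficient check that amounts to Pascal's rule. The only differences are cosmetic --- the paper organises the coefficient verification through polynomials $P_{k,l}(\alpha)$ and $Q_{k,l}$ in the variable $\alpha$ and leaves the final estimate implicit, whereas you verify the recurrence directly on the claimed coefficients and supply the bound explicitly via $\sum_l\binom{k}{l}y^{k-l}(2\alpha+k)^l=(y+2\alpha+k)^k\le 2^k\lr{y^k+(2\alpha+k)^k}$.
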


\begin{proof}
We proceed by induction on $k$. From \cite[(4.21)]{DFI02}, we see that
\begin{equation*}
    W_{\alpha, \alpha-\frac{1}{2}}(y) = y^{\alpha} e^{-y/2} .
\end{equation*}
We use the recursion formula \cite[(9.234)]{GR}
\begin{equation*}
    W_{\lambda+1, \mu}(y) = \lr{\frac{1}{2}y - \lambda}W_{\lambda, \mu}(y) - y W'_{\lambda, \mu}(y).
\end{equation*}
We we see that $W_{\alpha+k, \alpha-\frac{1}{2}}(y)$ is of the form
$$W_{\alpha+k, \alpha-\frac{1}{2}}(y) = e^{-\frac{y}{2}} y^{\alpha} \sum_{l=0}^k y^{k-l} P_{k,l}(\alpha)$$
where $P_{k,l}(X)$ polynomial of degree $l$. The recursion formula gives us that, for $1 \leq l \leq k$, we have 
\begin{equation*}
    P_{k+1,l} (\alpha) = P_{k,l} (\alpha) - (2 \alpha + 2k-l+1) P_{k, l-1} (\alpha).
\end{equation*}
Moreover, $P_{k,0} (\alpha)=1$ and $P_{k,k}(\alpha) = (-1)^k ( 2 \alpha)_k$, for all $k$. If we write $Q_{k,l}(X)=P_{k,l}\lr{\frac{X}{2}}$, one can check by inducion on $k$ that
\begin {equation*}
    Q_{k,l}(X) = (-1)^l {k \choose l}(X+k-1)(X+k-2)\cdots (X+k-l).
\end{equation*}
The conclusion follows.
\end{proof}

\section{Shifted convolution sums}
\label{holowinsky}

Let $\phi \in \mathcal{L}_{k_2-k_1}(X)$ with Fourier expansion
\begin{equation*}
    \phi(z)=a_0(y) + \sum_{l \neq 0} a_l(y) e(lx).
\end{equation*}
We want to evaluate $\inprod{\phi F_{k_1}}{G_{k_2}}$ by applying Holowinsky's approach \cite{Hol} by relating the inner product to shifted convolution sums. In this section we prove the following theorem.

\begin{theorem}
\label{shifted main}
Define
\begin{equation}
        \label{M(f,g)}
        M_{k_1,k_2}(f,g):=\frac{1}{(\log k_2)^2 L(1, \sym^2 f)^{1/2} L(1, \sym^2 g)^{1/2}} \prod_{p \leq k_2} \lr{1+\frac{|\lambda_f(p)|}{p}} \lr{1+\frac{|\lambda_g(p)|}{p}}.
    \end{equation}
Fix $\epsilon>0$. Then there exists a constant $N_{\epsilon}$ such that the following hold.
    \begin{enumerate}[label=\roman*]
        \item Let $u_{j,k_2-k_1}$ be a Hecke--Maa{\ss} form as above with eigenvalue $1/4 + t_j^2$. Then
        \begin{equation*}
            \inprod{u_{j, k_2-k_1}F_{k_1}}{G_{k_2}}\ll_{t_j, \epsilon} 2^{k_2-k_1} (1+k_2-k_1)^{N_{\epsilon}}M_{k_1,k_2}(f)^{1/2} (\log k_2)^{\epsilon}.
        \end{equation*}
        
        \item For an incomplete Eisenstein series $E_{k_2-k_1}(z |\psi)$, we have that $\inprod{E_{k_2-k_1}(\cdot |\psi) F_{k_1}}{G_{k_2}} - \delta_{f=g}\frac{3}{\pi} \inprod{E_0(\cdot | \psi)}{1}$ is bounded by
        \begin{equation*}
            O_{\psi, \epsilon} \lr{ 2^{k_2-k_1}(1+k_2-k_1)^{N_{\epsilon}} M_{k_1,k_2}(f)^{1/2} (\log k_2)^{\epsilon} (1+R_{k_1,k_2}(f,g))},
        \end{equation*}
        where
        \begin{equation*}
            R_{k_1,k_2}(f,g)=\frac{1}{k_2^{1/2} L(1, \sym^2 f)^{1/2} L(1, \sym^2 g)^{1/2}} \int_{- \infty} ^{+ \infty} \frac{\left |  L \lr{f \times g, \frac{1}{2}+it} \right|}{(|t|+1)^5} dt .
        \end{equation*}
    \end{enumerate}
\end{theorem}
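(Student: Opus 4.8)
The plan is to realise $\inprod{\phi F_{k_1}}{G_{k_2}}$ as a sum of shifted convolution sums of Hecke eigenvalues, following Holowinsky \cite{Hol}, with the weight-$k$ Fourier coefficient bounds of Lemmas \ref{bound eisenstein} and \ref{bound cusp} as the new input. Set $\Psi(z):=\phi(z)F_{k_1}(z)\overline{G_{k_2}(z)}$; since $\phi$ has weight $k_2-k_1$ and $F_{k_1}\overline{G_{k_2}}$ has weight $k_1-k_2$, the product $\Psi$ is $\Gamma$-invariant, so that $\inprod{\phi F_{k_1}}{G_{k_2}}=\int_{\mathcal F}\Psi\,d\mu$ over the standard fundamental domain $\mathcal F=\{|x|\le \tfrac12,\ |z|\ge 1\}$. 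The key geometric observation is that $y^{(k_1+k_2)/2}|f\overline g|$ concentrates at height $y\asymp k_2$: its leading behaviour is $\asymp y^{(k_1+k_2)/2}e^{-4\pi y}$, maximised at $y\asymp(k_1+k_2)/(8\pi)$. Hence the part of $\mathcal F$ with $\sqrt3/2\le y\le1$ contributes $O(e^{-ck_2})$ and may be discarded, while for $y\ge1$ the domain $\mathcal F$ contains the full period $|x|\le\tfrac12$, which is what lets me extract the arithmetic.

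First I would integrate in $x$ over a full period. Inserting $\phi=a_0(y)+\sum_{l\ne0}a_l(y)e(lx)$ and $f\overline g=\sum_{n,m}a_f(n)\overline{a_g(m)}e((n-m)x)e^{-2\pi(n+m)y}$, the inner $x$-integral forces $m=n+l$, so
\[ \inprod{\phi F_{k_1}}{G_{k_2}}=\int_{1}^\infty y^{\frac{k_1+k_2}{2}}\sum_{l}a_l(y)\sum_{n\ge\max(1,1-l)}a_f(n)\overline{a_g(n+l)}\,e^{-2\pi(2n+l)y}\,\frac{dy}{y^2}+O(e^{-ck_2}). \]
Writing $a_f(n)\overline{a_g(n+l)}=a_f(1)\overline{a_g(1)}\lambda_f(n)\lambda_g(n+l)\,n^{\frac{k_1-1}{2}}(n+l)^{\frac{k_2-1}{2}}$ exhibits a shifted convolution sum for each $l$. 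For $l\ne0$ I bound $a_l(y)$ by Lemma \ref{bound cusp} (part i) or Lemma \ref{bound eisenstein} (part ii): in both cases $a_l(y)+a_{-l}(y)\ll 2^{k_2-k_1}(1+k_2-k_1)^{A}\sqrt y\,|c(|l|)|\,(|l|y)^{-A}(1+(|l|y)^{-1})^\epsilon$, which supplies the factors $2^{k_2-k_1}$ and $(1+k_2-k_1)^{N_\epsilon}$ and, crucially, the decay in $l$ that makes the $l$-sum converge.

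The arithmetic core is then the shift-$l$ sum. Substituting $|a_f(1)|^2$ from \eqref{af1} (which produces the factors $(L(1,\sym^2f)L(1,\sym^2g))^{-1/2}$ and $\Gamma(k_1)^{-1/2}\Gamma(k_2)^{-1/2}$), invoking Deligne's bound $|\lambda_f(n)|\le\tau(n)$, and applying Holowinsky's estimate for $\sum_{n\le x}|\lambda_f(n)\lambda_g(n+l)|$ with $x\asymp k_2$, the Euler product $\prod_{p\le k_2}(1+\frac{|\lambda_f(p)|}{p})(1+\frac{|\lambda_g(p)|}{p})$ together with the powers of $\log k_2$ assemble into $M_{k_1,k_2}(f,g)$. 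The $y$-integral $\int y^{(k_1+k_2)/2}e^{-2\pi(2n+l)y}\frac{dy}{y^2}$ contributes Gamma factors which, after Stirling and combination with $a_f(1)\overline{a_g(1)}$ and the $\sqrt y$ from $a_l(y)$, cancel against $\Gamma(k_1)^{1/2}\Gamma(k_2)^{1/2}$; summing over $l$ against $\tau(|l|)|c(|l|)|(|l|y)^{-A}$ then converges and yields the bound in the stated form. I expect this bookkeeping — tracking the mutually cancelling Gamma factors and confirming that the Euler product emerges with exactly the claimed power of $\log k_2$ — to be the main obstacle, since a crude application of the shifted convolution bound loses logarithmic factors that must be recovered from the normalisation.

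For part (ii) the term $l=0$ must be isolated, since then $a_0(y)\ne0$; for part (i) one has $a_0(y)=0$ by Lemma \ref{bound cusp}, so this step is absent and no main term appears. There is no shift here, so I would switch to Rankin--Selberg: by \eqref{blabla} and \eqref{eisenstein expansion} the constant term $a_0(y)$ is a contour integral of $y^{s}$ against $\Psi(-s)$, and the resulting diagonal series $\sum_n\lambda_f(n)\lambda_g(n)n^{-s}$ equals $L(f\times g,s)/\zeta(2s)$. Shifting the contour to $\Re(s)=\tfrac12$ crosses a simple pole at $s=1$ exactly when $f=g$ (and $k_1=k_2$), which by \eqref{residue_eisenstein} produces the main term $\delta_{f=g}\tfrac3\pi\inprod{E_0(\cdot|\psi)}{1}$, while the remaining integral on the critical line is governed by $\int|L(f\times g,\tfrac12+it)|(|t|+1)^{-5}\,dt$, giving the factor $R_{k_1,k_2}(f,g)$. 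Combining this $l=0$ contribution with the $l\ne0$ bound from the previous step yields the estimate with the $(1+R_{k_1,k_2}(f,g))$ factor.
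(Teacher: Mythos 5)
Your overall architecture (Fourier-expand $\phi$, reduce to shifted convolution sums bounded by Holowinsky's sieve, treat $l=0$ by Rankin--Selberg to extract the main term and the $R_{k_1,k_2}(f,g)$ factor) matches the paper's, but the step by which you justify unfolding the $x$-integral contains a genuine error, and it is precisely the step the paper's machinery exists to handle. You claim that $y^{(k_1+k_2)/2}|f\overline{g}|$ concentrates at $y\asymp k_2$ because its ``leading behaviour'' is $y^{(k_1+k_2)/2}e^{-4\pi y}$, and hence that the part of the fundamental domain with $\sqrt{3}/2\le y\le 1$ contributes $O(e^{-ck_2})$. That describes only the $n=m=1$ Fourier mode. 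At height $y\asymp 1$ the modes with $n\asymp k$ dominate, and the $L^2$-normalised mass $|F_{k_1}|^2$ in fact equidistributes (that is the Holowinsky--Soundararajan theorem), so the region $\mathcal{F}\cap\{y<1\}$ carries a positive proportion of the mass; asserting that the cross term $F_{k_1}\overline{G_{k_2}}$ is negligible there is essentially the statement being proved, so the argument is circular. On that region the fundamental domain is cut by $|z|\ge 1$ and is not a full period in $x$, so your orthogonality step is simply unavailable there.

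The paper avoids this by never integrating over the fundamental domain directly: it introduces $I_{\phi}(Y)=\frac{1}{2\pi i}\int_{(\sigma)}\Psi(-s)Y^s\int_X E(z,s)\phi F_{k_1}\overline{G_{k_2}}\,d\mu\,ds$, recovers $\inprod{\phi F_{k_1}}{G_{k_2}}=c_Y^{-1}I_{\phi}(Y)+O(Y^{-1/2})$ from the pole of $E(z,s)$ at $s=1$, and then unfolds $I_{\phi}(Y)$ over the full strip $0<x<1$, localised at $y\asymp 1/Y$ (low in the cusp, covering $O(Y)$ copies of the fundamental domain). This makes the $x$-unfolding legitimate, produces shifted convolution sums of length $n\asymp Yk_2$, and --- crucially --- introduces the free parameter $Y$, which is optimised at $Y=M_{k_1,k_2}(f,g)^{-1}$ to balance the unfolding error $Y^{-1/2}$ against the sieve bound $Y^{1/2+\epsilon}M_{k_1,k_2}(f,g)$; this balancing is the source of the exponent $1/2$ on $M_{k_1,k_2}(f,g)$ in the statement. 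Your version has no analogue of either the $O(Y^{-1/2})$ control or the optimisation, so even setting aside the false concentration claim, the bookkeeping would not produce the stated bound. The remaining ingredients you list (the weight-$k$ coefficient bounds supplying $2^{k_2-k_1}(1+k_2-k_1)^{N_\epsilon}$, the use of \eqref{af1}, the contour shift for $l=0$) are all correct and are used the same way in the paper, but they need to be run inside the $I_{\phi}(Y)$ framework.
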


Fix $\psi$ smooth and compactly supported on $\R^+$ and $\Psi(s)$ its Mellin transform.
For $Y \geq 1$, we define
\begin{equation}
    \label{Iphi2}
    I_{\phi}(Y) := \frac{1}{2 \pi i} \int_{(\sigma)} \Psi(-s) Y^s \int_{X}  E(z,s) \phi(z) F_{k_1}(z) \overline{G_{k_2}(z)} d \mu ds
\end{equation}
for $\sigma >1$.

\begin{lemma}
For $\phi$ a fixed a Hecke--Maa{\ss} cusp form or incomplete Eisenstein series, we have
\begin{equation}
    \label{inprodsum}
    \inprod{\phi F_{k_1}}{G_{k_2}} = c_Y^{-1} I_{\phi}(Y) + O_{ \psi} \lr{Y^{-1/2}} \ ,
\end{equation}
where
\begin{equation}
    \label{cY}
    c_Y := \frac{3}{\pi} \Psi(-1) Y \ .
\end{equation}
\end{lemma}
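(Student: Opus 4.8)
The plan is to read off the inner product as the residue of the weight-zero Eisenstein series at its pole $s=1$, and to absorb everything else into a contour that the rapid decay of $\Psi$ renders small. Set $h(z):=\phi(z)F_{k_1}(z)\overline{G_{k_2}(z)}$. Since $\phi$ has weight $k_2-k_1$, $F_{k_1}$ weight $k_1$, and $\overline{G_{k_2}}$ weight $-k_2$, the product $h$ has weight $0$, i.e. it is a genuine $\Gamma$-invariant function on $X$, and because $f,g$ are cusp forms it decays exponentially in the cusp. Consequently the inner integral $\inprod{E(\cdot,s)}{\overline{h}}:=\int_X E(z,s)h(z)\,d\mu$ appearing in \eqref{Iphi2} converges absolutely and is meromorphic in $s$, with its only singularity in the strip $\tfrac12\le\Re s\le\sigma$ a simple pole at $s=1$ inherited from $E(z,s)$; by \eqref{residue_eisenstein} its residue there is $\tfrac3\pi\int_X h\,d\mu=\tfrac3\pi\inprod{\phi F_{k_1}}{G_{k_2}}$.

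First I would move the line of integration in \eqref{Iphi2} from $\Re s=\sigma>1$ down to $\Re s=\tfrac12$. The decay \eqref{decay} of $\Psi$ justifies the shift and kills the horizontal segments, and crossing the simple pole at $s=1$ picks up $\Psi(-1)\,Y\cdot\tfrac3\pi\inprod{\phi F_{k_1}}{G_{k_2}}=c_Y\inprod{\phi F_{k_1}}{G_{k_2}}$, with $c_Y$ as in \eqref{cY}. This gives
\begin{equation*}
 I_{\phi}(Y)=c_Y\inprod{\phi F_{k_1}}{G_{k_2}}+\frac{1}{2\pi i}\int_{(1/2)}\Psi(-s)\,Y^s\int_X E(z,s)h(z)\,d\mu\,ds,
\end{equation*}
so after dividing by $c_Y\asymp Y$ it remains to show the last integral is $O_\psi(Y^{1/2})$. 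On the line $\Re s=\tfrac12$ one has $|Y^s|=Y^{1/2}$, and I would estimate the inner integral directly: bounding $E(z,\tfrac12+it)$ by its polynomial growth in $t$ and its $y^{1/2}$ growth high in the cusp, against the exponential decay of $h$, yields $\int_X E(z,\tfrac12+it)h\,d\mu\ll(1+|t|)^{B}$ for some fixed $B$. Combined with $\Psi(-s)\ll(1+|s|)^{-A}$ for every $A$, taking $A>B+1$ makes the $t$-integral converge and bounded independently of $Y$, so the remaining integral is $\ll_\psi Y^{1/2}$ and the claimed error $O_\psi(Y^{-1/2})$ follows.

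The main obstacle is precisely this estimate on the critical line: one must control $\int_X E(z,\tfrac12+it)h\,d\mu$ in $t$ (to beat the decay of $\Psi$) and track its dependence on the weights $k_1,k_2$. This is exactly the rate of equidistribution of closed horocycles — equivalently, the assertion that the zeroth Fourier coefficient $H_0(y):=\int_0^1 h(x+iy)\,dx$ satisfies $H_0(y)=\tfrac3\pi\inprod{\phi F_{k_1}}{G_{k_2}}+O(\sqrt y)$ as $y\to0$, the $\sqrt y$ arising from the continuous spectrum at $\Re s=\tfrac12$. An equivalent route, which may be cleaner to write, unfolds \eqref{Iphi2} for $\Re s>1$ into $I_{\phi}(Y)=\int_0^\infty\psi(Yy)H_0(y)\tfrac{dy}{y^2}$, inserts this asymptotic, and evaluates the two Mellin integrals $\int_0^\infty\psi(Yy)y^{-2}\,dy=Y\Psi(-1)$ and $\int_0^\infty\psi(Yy)y^{-3/2}\,dy=Y^{1/2}\Psi(-\tfrac12)$, reproducing the main term $c_Y\inprod{\phi F_{k_1}}{G_{k_2}}$ and the error $O_\psi(Y^{1/2})$ respectively. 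The dependence of the implied constant on the weights enters only through a fixed weighted $L^2$-norm of $h$ and is at worst a power of $\log k_2$, hence harmless once $Y$ is taken to be a fixed power of $k_2$.
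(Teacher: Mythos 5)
Your proposal is correct and follows essentially the same route as the paper: shift the contour in $I_{\phi}(Y)$ to $\Re s=\tfrac12$, extract the residue $c_Y\inprod{\phi F_{k_1}}{G_{k_2}}$ at the pole $s=1$ of $E(z,s)$, and bound the remaining critical-line contribution by $O_{\psi}(Y^{1/2})$ using the growth of $E(z,\tfrac12+it)$ together with the rapid decay of $\Psi$. The only (immaterial) difference is the order of integration: the paper first bounds $p(z)=\int_{(1/2)}\Psi(-s)Y^{s}E(z,s)\,ds\ll\sqrt{yY}$ pointwise and then integrates against $\phi F_{k_1}\overline{G_{k_2}}$, whereas you estimate $\int_X E(z,\tfrac12+it)h\,d\mu$ for each fixed $t$ first.
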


\begin{proof}

We move the contour of integration in \eqref{Iphi2} to the line $\Re(s)=1/2$. There is a pole at $s=1$ coming from the Eisenstein series, with residue
\begin{equation*}
    \Psi(-1) Y \lr{\Res_{s=1}E(z,s)} \inprod{\phi F_{k_1}}{G_{k_2}} = c_Y  \inprod{\phi F_{k_1}}{G_{k_2}} \ .
\end{equation*}
Therefore we obtain 
\begin{equation}
    \label{there}
    I_{\phi}(Y) = c_Y \inprod{\phi F_{k_1}}{G_{k_2}} + \int_{X}p(z) \phi (z) F_{k_1}(z) \overline{G_{k_2}(z)} d \mu \ ,
\end{equation}
where
\begin{equation*}
    p(z):= \int_{(1/2)} \Psi(-s) Y^s E(z,s) ds \ .
\end{equation*}
On the line $\Re(s)=1/2$, from \cite[Lemma 2.1]{Hol}, we have $$E(z,s) \ll_{\epsilon} \sqrt{y} + |s|^2 y^{-3/2}(1+|s|/ y)^{\epsilon}.$$
Using the fast decay of $\Psi(s)$, we obtain $p(z) \ll \sqrt{yY}$ if $y \geq 1/2$. Going back to \eqref{there}, if we assume $\sqrt{y} |\phi(z)|$ is bounded on $X$, we conclude that 
\begin{equation*}
     \int_{X}p(z) \phi (z) F_{k_1}(z) \overline{G_{k_2}(z)} d \mu \ll_{\phi, \psi} \sqrt{Y} \ .
\end{equation*}
The assumption that $\sqrt{y} |\phi(z)|$ is bounded on $X$ is true for cusp forms and incomplete Eisenstein series. 
\end{proof}
We observe that
\begin{equation}
    \label{Iphi}
    I_{\phi}(Y)= \frac{1}{2 \pi i} \int_0^{\infty} \psi(Yy) y^{-2} \lr{\int_{-1/2}^{1/2}\phi(z) F_{k_1}(z) \overline{G_{k_2}(z)}dx} dy \ .
\end{equation}
This follows from using a standard unfolding argument and then applying the inverse Mellin transform. 

\begin{proposition}
Let $Y>1$. For any $\epsilon>0$, there exists a constant $N_{\epsilon}$ such that, for $\phi$ a Hecke--Maa{\ss} cusp form or incomplete Eisenstein series, we have
\begin{align*}
    \inprod{\phi F_{k_1}}{G_{k_2}}= &c_{Y}^{-1} \int_{0}^\infty \psi(Yy) y^{-2} \lr{\int_{-1/2}^{1/2} \phi^*(z) F_{k_1}(z) \overline{G_{k_2}(z)} dx} dy  \\ &+ O \lr{ 2^{k_2-k_1}{(k_2-k_1 +1)^{N_{\epsilon}}}Y^{-1/2}} , 
\end{align*}
where
\begin{equation*}
    \phi^*(z):= \sum_{|l|<Y^{1+ \epsilon}} a_l(y) e(lx) \ .
\end{equation*}
\end{proposition}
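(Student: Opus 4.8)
The plan is to combine the exact formula \eqref{inprodsum} with the integral representation \eqref{Iphi}, and to define $I_{\phi^*}(Y)$ by the same formula \eqref{Iphi} with $\phi$ replaced by its truncation $\phi^*$. Since $c_Y^{-1}I_{\phi^*}(Y)$ is precisely the claimed main term, it suffices to estimate $c_Y^{-1}\bigl(I_\phi(Y)-I_{\phi^*}(Y)\bigr)$ and to absorb the $O_\psi(Y^{-1/2})$ of \eqref{inprodsum}. So first I would reduce everything to controlling the inner $x$-integral with $\phi-\phi^*$ in place of $\phi$.

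Next, writing $\phi(z)=\sum_l a_l(y)e(lx)$ together with the expansions $F_{k_1}(z)=y^{k_1/2}a_f(1)\sum_{n\geq 1}\lambda_f(n)n^{(k_1-1)/2}e(nx)e^{-2\pi ny}$ and the analogous one for $\overline{G_{k_2}}$, the integral $\int_{-1/2}^{1/2}e(lx)e(nx)e(-mx)\,dx$ selects the frequencies $l=m-n$. Hence the inner integral becomes the shifted sum
\begin{equation*}
y^{(k_1+k_2)/2}a_f(1)\overline{a_g(1)}\sum_{n,m\geq 1}a_{m-n}(y)\,\lambda_f(n)\overline{\lambda_g(m)}\,n^{(k_1-1)/2}m^{(k_2-1)/2}e^{-2\pi(n+m)y},
\end{equation*}
and subtracting the $\phi^*$-version leaves exactly the terms with $|m-n|\geq Y^{1+\epsilon}$.

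The key point is that these high frequencies are suppressed by the decay of the coefficients $a_l(y)$. On the support of $\psi(Yy)$ one has $y\gg_\psi 1/Y$, so for $|m-n|\geq Y^{1+\epsilon}$ we get $|m-n|\,y\gg Y^{\epsilon}$; feeding this into Lemma \ref{bound eisenstein} (for an incomplete Eisenstein series) or Lemma \ref{bound cusp} (for a Maa{\ss} cusp form) gives, for every integer $A\geq 0$,
\begin{equation*}
|a_{m-n}(y)|\ll 2^{k_2-k_1}(1+k_2-k_1)^{A}\sqrt{y}\,\tau(|m-n|)\,(|m-n|\,y)^{-A}\ll_{\psi,A} 2^{k_2-k_1}(1+k_2-k_1)^{A}\sqrt{y}\,\tau(|m-n|)\,Y^{-\epsilon A}.
\end{equation*}
This is where the factors $2^{k_2-k_1}$ and $(1+k_2-k_1)^{A}$ enter, and it is also the source of the arbitrarily large negative power of $Y$.

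It then remains to bound the surviving arithmetic sum. I would invoke Deligne's bound $|\lambda_f(n)|\leq\tau(n)$, $|\lambda_g(m)|\leq\tau(m)$, evaluate $|a_f(1)|,|a_g(1)|$ through \eqref{af1}, and use the Gaussian factor $e^{-2\pi(n+m)y}$ to make the double sum over $n,m$ absolutely convergent; together with $c_Y^{-1}\asymp_\psi Y^{-1}$ from \eqref{cY} and the $y$-integration against $\psi(Yy)y^{-2}$, all of these steps contribute only fixed powers of $Y$ (and of the weights). Choosing the free exponent $A=A(\epsilon)$ large enough that the gain $Y^{-\epsilon A}$ dominates these polynomial factors then yields an error of the shape $2^{k_2-k_1}(1+k_2-k_1)^{N_\epsilon}Y^{-1/2}$, into which the $O_\psi(Y^{-1/2})$ term is absorbed. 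The hard part will be exactly this final balancing: one must confirm that the power $Y^{-\epsilon A}$ genuinely overwhelms the polynomial growth produced by estimating the Fourier coefficients and the shifted convolution sum in absolute value, and it is this quantitative comparison that fixes the admissible value of $N_\epsilon$.
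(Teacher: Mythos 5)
Your opening reduction---define $I_{\phi^*}(Y)$ by truncating the Fourier expansion of $\phi$, note that $c_Y^{-1}I_{\phi^*}(Y)$ is the main term, and control the tail $|l|\ge Y^{1+\epsilon}$ by feeding $|l|y\gg Y^{\epsilon}$ into Lemma \ref{bound eisenstein} (resp.\ Lemma \ref{bound cusp}) to gain an arbitrary power $Y^{-\epsilon A}$---is exactly the paper's strategy, and the source of the factors $2^{k_2-k_1}(1+k_2-k_1)^{A}$ is correctly identified. The divergence, and the gap, is in how you bound what remains after extracting the coefficient bound. The paper never opens the Fourier expansions of $F_{k_1}$ and $G_{k_2}$ at this stage: it bounds $\int_0^\infty\int_{-1/2}^{1/2}\psi(Yy)y^{-2}|F_{k_1}(z)||G_{k_2}(z)|\,dx\,dy\ll Y$ by observing that the region $y\asymp 1/Y$, $|x|\le 1/2$ is covered by $O(Y)$ copies of the fundamental domain and that $\int_{\GH}|F_{k_1}||G_{k_2}|\,d\mu\le 2$ by $L^2$-normalisation. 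The implied constant therefore depends only on $\psi$.

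Your route instead expands both cusp forms, applies Deligne termwise, and sums in absolute value. You then assert that this contributes ``fixed powers of $Y$ (and of the weights)'' yet conclude with an error $2^{k_2-k_1}(1+k_2-k_1)^{N_\epsilon}Y^{-1/2}$ carrying no dependence on $k_1,k_2$ individually; that last step is unjustified and, as described, false. Concretely, $\sum_{n,m}\tau(n)\tau(m)n^{(k_1-1)/2}m^{(k_2-1)/2}e^{-2\pi(n+m)y}$ at $y\asymp 1/Y$ has size $\asymp\Gamma\lr{\frac{k_1+1}{2}}\Gamma\lr{\frac{k_2+1}{2}}(2\pi y)^{-\frac{k_1+k_2}{2}-1}$ up to logarithms; multiplying by $y^{(k_1+k_2)/2}|a_f(1)\overline{a_g(1)}|$ and using \eqref{af1} together with $\Gamma\lr{\frac{k_i+1}{2}}/\Gamma(k_i)^{1/2}\asymp k_i^{1/4}2^{-k_i/2}$ leaves a residual factor $(k_1k_2)^{1/4}\lr{L(1,\sym^2 f)L(1,\sym^2 g)}^{-1/2}$. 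Even with a more careful Cauchy--Schwarz recombination (comparing $\sum_n\tau(n)^2n^{k-1}e^{-4\pi ny}$ with $\Gamma(k)(4\pi y)^{-k}$), the replacement of $|\lambda_f(n)|$ by $\tau(n)$ over $n\asymp kY$ costs $(\log k_2)^{O(1)}$. No choice of $A$ helps, since the free power $Y^{-\epsilon A}$ cannot absorb a factor growing with the weight; and in the endgame $Y=M_{k_1,k_2}(f,g)^{-1}$ is only a power of $\log k_2$ while the total saving is $(\log k_2)^{-1/24+\epsilon}$, so even a stray $(\log k_2)^{O(1)}$ in this proposition would break the proofs of Theorems \ref{main} and \ref{fixed difference}. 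At this step you need the geometric $L^2$ count, not termwise Deligne; the Deligne/shifted-convolution machinery belongs to the later treatment of the individual $S_l(Y)$, where the normalisation \eqref{af1} is played off against the $\Gamma$-factors exactly rather than in absolute value.
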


\begin{proof}
We evaluate the contribution to $I_{\phi}(Y)$ coming from large Fourier coefficients $a_l(y)$ of $\phi$. Assume $\phi$ is an incomplete Eisenstein series of weight $k_2-k_1$. We make use of Lemma \ref{bound eisenstein}. The contribution coming from Fourier coefficients larger than $Y^{1+\epsilon}$ is bounded by
\begin{align*}
    &\sum_{|l|\geq Y^{1+ \epsilon}} \int_{0}^\infty  \int_{-1/2}^{1/2}  \psi(Yy) y^{-2}  a_l(y)  |F_{k_1}(z)|  |G_{k_2}(z)| dx dy \\
    \ll &  2^{k_2-k_1}{(k_2-k_1+1)^{A}}\lr{  \int_{0}^\infty  \int_{-1/2}^{1/2}  \psi(Yy) y^{-2}    |F_{k_1}(z)|  |G_{k_2}(z)| dx dy} Y^{A -1/2 + \epsilon} \sum_{l > Y^{1+ \epsilon}} \frac{\tau(l)}{l^A} \\
    \ll &  {2^{k_2-k_1}}{(k_2-k_1+1)^{A}} Y^{A+1/2+ \epsilon} Y^{(1+\epsilon)(1-A)} \ll  {2^{k_2-k_1}}{(k_2-k_1+1)^{A}} Y^{-1/2} ,
\end{align*}
if we choose $A$ large enough with respect to $\epsilon$. We note that the double integral is bounded by $O(Y)$, since
\begin{equation*}
    \int_{\GH} |F_{k_1}(z)| |G_{k_2}(z)| d \mu (z) \leq \| F_{k_1} \| + \| G_{k_2} \| = 2,
\end{equation*}
and we know that $y \asymp 1/Y$, $-1/2 \leq x \leq 1/2$, and by \cite[Lemma 2.10]{Iw} we know there are $O(Y)$ copies of the fundamental domain in this region. The proof for Maa{\ss} forms follows similarly.
\end{proof}

For an integer $l$, we define
\begin{equation}
    \label{S_l}
    S_l(Y):=  \int_{0}^\infty \psi(Yy) y^{-2} \lr{\int_{-1/2}^{1/2} a_l(y) e(lx) F_{k_1}(z) \overline{G_{k_2}(z)} dx} dy \ .
\end{equation}
Hence
\begin{equation}
    c_Y \inprod{\phi F_{k_1}}{G_{k_2}} = S_0(Y) + \sum_{0 < |l|<Y^{1+\epsilon}} S_l(Y) + O\lr{ 2^{k_2-k_1} {(k_2-k_1 +1)^{N_{\epsilon}}} Y^{1/2}} \ .
\end{equation}
We note that $S_0(Y) \equiv 0$ when $\phi$ is a cusp form.

\begin{lemma}
\label{S_0}
Let $Y \geq 1$ and $\phi=E_{k_2-k_1}(z | h)$ an incomplete Eisenstein series of weight $k_2-k_1$. Then
\begin{align*}
   c_Y^{-1} S_0(Y) = &\delta_{k_1=k_2} \frac{3}{\pi} \inprod{\phi}{1}  + O(Y^{-1/2}) \\ & + O\lr{\left (Y k_2)^{-1/2}(|L(\sym^2 f, 1) L(\sym^2g, 1)| \right )^{-1/2} \int_{-\infty}^{\infty} \frac{\left |  L \lr{f \times g, \frac{1}{2}+it} \right|}{(|t|+1)^5} dt} .
\end{align*}
\end{lemma}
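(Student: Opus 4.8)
The idea is to strip off the constant Fourier coefficient, recognise the remaining $y$-integral as a Rankin--Selberg Mellin transform, and extract the main term from a single residue. First I would write $F_{k_1}(z)=y^{k_1/2}f(z)$, $G_{k_2}(z)=y^{k_2/2}g(z)$ in Fourier series and perform the $x$-integration in \eqref{S_l}; orthogonality of the characters $e(nx)$ collapses the double sum to the diagonal, yielding
\begin{equation*}
    \int_{-1/2}^{1/2}F_{k_1}(z)\overline{G_{k_2}(z)}\,dx = a_f(1)\overline{a_g(1)}\,y^{\frac{k_1+k_2}{2}}\sum_{n\geq 1}\lambda_f(n)\lambda_g(n)\,n^{\frac{k_1+k_2}{2}-1}e^{-4\pi n y} =: \Phi_0(y).
\end{equation*}
Since $a_0(y)$ is independent of $x$, this gives $S_0(Y)=\int_0^\infty \psi(Yy)\,y^{-2}a_0(y)\Phi_0(y)\,dy$. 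I would then insert $a_0(y)=\delta_{k_1=k_2}\tfrac{3}{\pi}H(-1)+O(\sqrt y)$ from Lemma \ref{bound eisenstein}, where $H$ denotes the Mellin transform of $h$, splitting $S_0(Y)$ into the constant-coefficient piece $\delta_{k_1=k_2}\tfrac{3}{\pi}H(-1)\,I_1(Y)$ with $I_1(Y):=\int_0^\infty \psi(Yy)y^{-2}\Phi_0(y)\,dy$, and a remainder $E(Y)$ collecting the $O(\sqrt y)$ term.

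To evaluate $I_1(Y)$ I would Mellin-invert $\psi(Yy)$ and use Proposition \ref{noname}, which identifies $\int_0^\infty y^{s-2}\Phi_0(y)\,dy$ with the completed Rankin--Selberg transform
\begin{equation*}
    V(s)=(4\pi)^{1-s-\frac{k_1+k_2}{2}}\Gamma\lr{s+\tfrac{k_1+k_2}{2}-1}a_f(1)\overline{a_g(1)}\frac{L(f\times g,s)}{\zeta(2s)},
\end{equation*}
so that $I_1(Y)=\tfrac{1}{2\pi i}\int_{(c)}Y^s\Psi(-s)V(s)\,ds$ for $c>1$. Shifting the contour to $\Re(s)=\tfrac12$, the only pole is the simple one at $s=1$ carried by $L(f\times g,s)$, which is present exactly when $f=g$ (and then $k_1=k_2$). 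Using $\Res_{s=1}L(f\times f,s)=L(1,\sym^2 f)$, the formula \eqref{af1} for $|a_f(1)|^2$, and $\zeta(2)=\pi^2/6$, the powers of $4\pi$, the factor $\Gamma(k_2)$ and $L(1,\sym^2 f)$ all cancel and the residue equals $\tfrac{3}{\pi}\Psi(-1)Y=c_Y$; after dividing by $c_Y$ and multiplying by the prefactor this contributes the main term $\delta_{k_1=k_2}\tfrac{3}{\pi}H(-1)=\delta_{k_1=k_2}\tfrac{3}{\pi}\inprod{\phi}{1}$, since $H(-1)=\inprod{E_0(\cdot\,|\,h)}{1}=\inprod{\phi}{1}$ in the diagonal case (and this contribution vanishes unless $f=g$).

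The leftover integral on $\Re(s)=\tfrac12$ is the subconvexity error. On $s=\tfrac12+it$ I would use $|\Gamma(k_2-\tfrac12+it)|\le\Gamma(k_2-\tfrac12)$ together with $|a_f(1)\overline{a_g(1)}|$ from \eqref{af1}; Stirling converts $\Gamma(k_2-\tfrac12)/\Gamma(k_2)$ into a factor $\ll k_2^{-1/2}$ (the binomial bound already used in Lemma \ref{eisenstion inner products}), the $(4\pi)$-powers cancel, $|Y^s|=Y^{1/2}$, and $1/|\zeta(1+2it)|\ll\log(1+|t|)$. Bounding the rapid decay of $\Psi(-\tfrac12-it)$ by $(|t|+1)^{-5}$ and dividing by $c_Y\asymp Y$ produces precisely
\begin{equation*}
    O\lr{(Yk_2)^{-1/2}\lr{L(1,\sym^2 f)L(1,\sym^2 g)}^{-1/2}\int_{-\infty}^{\infty}\frac{\left|L\lr{f\times g,\tfrac12+it}\right|}{(|t|+1)^5}\,dt}.
\end{equation*}
For the remainder I would simply use $|a_0(y)|\ll\sqrt y$ and the bound $\int_{\mathrm{strip}}|F_{k_1}||G_{k_2}|\,d\mu\ll Y$, coming from Cauchy--Schwarz and the $O(Y)$-copies-of-the-fundamental-domain count of \cite[Lemma 2.10]{Iw}; together with $y\asymp Y^{-1}$ on the support of $\psi(Yy)$ and $d\mu=y^{-2}dx\,dy$ this gives $E(Y)\ll Y^{1/2}$, hence $c_Y^{-1}E(Y)=O(Y^{-1/2})$.

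The main obstacle is the archimedean bookkeeping in the last two steps: one must combine $|a_f(1)|^2$ from \eqref{af1} with the ratio of $\Gamma$-factors so that the residue collapses to the clean constant $\tfrac{3}{\pi}$ and the off-line integral gains exactly the $k_2^{-1/2}$ saving, all while justifying the contour shift and the interchange of summation and integration through the rapid decay of $\Psi$ and $H$.
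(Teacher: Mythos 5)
Your proposal is correct and follows essentially the same route as the paper: strip the constant coefficient $a_0(y)$ via Lemma \ref{bound eisenstein}, collapse to the diagonal Rankin--Selberg sum, Mellin-invert, shift to $\Re(s)=\tfrac12$ picking up the residue $c_Y$ at $s=1$ (present exactly when $f=g$), and bound the off-line integral using \eqref{af1} together with the binomial/Stirling estimate for the Gamma ratio, which yields the $k_2^{-1/2}$ saving. The only cosmetic differences are that you treat the $O(\sqrt y)$ remainder more explicitly than the paper does, and the Gamma factor on the critical line should read $\Gamma\lr{\tfrac{k_1+k_2}{2}-\tfrac12+it}$ rather than $\Gamma\lr{k_2-\tfrac12+it}$ (the binomial bound you cite handles this general form, so the conclusion is unaffected).
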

\begin{proof}
From the definition of $S_0(Y)$ and \eqref{O coeff incomplete Eisen}, we obtain
\begin{equation*}
    S_0(Y)= \lr{\delta_{k_1=k_2} \frac{3}{\pi} \inprod{\phi}{1} + O \lr{Y^{-1/2}}}\int_0^{\infty} \psi(Yy)y^{\frac{k_1+k_2}{2}-2} \lr{\int_{-1/2}^{1/2} f(z) \overline{g(z)} dx} dy \ .
\end{equation*}
Expanding the product $f(z) \overline{g(z)}$ as a Fourier sum and computing the inner integral above, we obtain
\begin{equation*}
    S_0(Y) = \lr{ \delta_{k_1=k_2} \frac{3}{\pi} \inprod{\phi}{1}+ O \lr{Y^{-1/2}}} \sum_{n \geq 1} a_f(n) \overline{a_g(n)} \int_0^{\infty} \psi(Yy) y^{\frac{k_1+k_2}{2}-2} e^{- 4 \pi n y} dy \ .
\end{equation*}
We evaluate the integral in $y$ using the inverse Mellin transform.
\begin{align*}
    \int_0^{\infty} \psi(Yy) y^{\frac{k_1+k_2}{2}-2} e^{- 4 \pi n y} dy &= \int_0^{\infty}  \lr{\frac{1}{2 \pi i}\int_{(\sigma)}(Yy)^s \Psi(-s) ds}  y^{\frac{k_1+k_2}{2}-2} e^{- 4 \pi n y} dy \\
    &= \frac{1}{2 \pi i}\int_{(\sigma)} Y^s \Psi(-s) (4 \pi n)^{-s-\frac{k_1+k_2}{2}+1} \Gamma\lr{s+\frac{k_1+k_2}{2}-1} ds,
\end{align*}
Hence 
\begin{align*}
    &\sum_{n \geq 1 }a_f(n) \overline{a_g(n)} \int_0^{\infty} \psi(Yy)   y^{\frac{k_1+k_2}{2}-2} e^{- 4 \pi n y} dy = \\ =& \frac{1}{2 \pi i} a_f(1) \overline{a_g(1)} (4 \pi)^{1-\frac{k_1+k_2}{2}}
     \int_{(\sigma)} \lr{\frac{Y}{4 \pi}}^s \Psi(-s) \frac{L(f \times g, s)}{\zeta(2s)} \Gamma\lr{s+\frac{k_1+k_2}{2}-1} ds \ .
\end{align*}
We move the contour of integration to the line $\Re(s)=1/2$. We note that we pick up a pole at $s=1$ if and only if $f=g$. In this case, we use \eqref{af1} to compute the residue. Therefore, we obtain
\begin{align*}
    \sum_{n \geq 1 }a_f(n) \overline{a_g(n)} \int_0^{\infty} \psi(Yy) y^{\frac{k_1+k_2}{2}-2} e^{- 4 \pi n y} dy = \delta_{f=g} \frac{3}{\pi} \Psi(-1) Y +  E(Y) ,
\end{align*}
where
\begin{align*}
    E(Y) =& \frac{1}{2 \pi i} a_f(1) \overline{a_g(1)} (4 \pi)^{1-\frac{k_1+k_2}{2}} \\ & \times 
     \int_{-\infty}^{\infty} \lr{\frac{Y}{4 \pi}}^{1/2+it} \Psi \lr{-\frac{1}{2} -it} \frac{L\left (f \times g, \frac{1}{2}+it \right )}{\zeta(1+2 i t)} \Gamma\lr{\frac{k_1+k_2}{2}-\frac{1}{2} + it} dt .
\end{align*}
From \cite[p.51]{tit}, we know that $\zeta(1+it) \gg (\log t)^{-7}$. 
Hence, using the rapid decay of $\Psi(-s)$ guaranteed by \eqref{decay} and expanding $a_f(1) \overline{a_g(1)}$ as in \eqref{af1}, we obtain
\begin{align*}
    E(Y) &\ll Y^{1/2} \frac{{k_1+k_2-2 \choose k_1-1}^{1/2}}{(k_1+k_2)^{1/2} {k_1+k_2-2 \choose {\frac{k_1+k_2}{2}-1}}^{1/2}} |L(\sym^2 f, 1) L(\sym^2 g, 1)|^{-1/2} \int_{-\infty}^{\infty} \frac{\left |  L \lr{f \times g, \frac{1}{2}+it} \right|}{(|t|+1)^{10}} dt \\
    &\ll Y^{1/2} k_2^{-1/2}  |L(\sym^2 f, 1) L(\sym^2 g, 1)|^{-1/2} \int_{-\infty}^{\infty} \frac{\left |  L \lr{f \times g, \frac{1}{2}+it} \right|}{(|t|+1)^{10}} dt .
\end{align*}
\end{proof}

\begin{lemma}
\label{S_l(Y)}
Let $\phi$ be a fixed automorphic form. Then for $l \neq 0$, we have
\begin{equation*}
    c_Y^{-1} S_l(Y) \ll  \left | \frac{a_l(Y^{-1})}{L(\sym^2 f, 1)^{1/2} L(\sym^2 g , 1)^{1/2}} \right | \lr{\frac{1}{ Y k_2}\sum_{n \asymp Y k_2 } |\lambda_f(n) \lambda_g(n+l)| + Y^{\epsilon}(k_1+k_2)^{-1+\epsilon}} .
\end{equation*}
\end{lemma}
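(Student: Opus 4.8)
The plan is to reduce $S_l(Y)$ to an arithmetically weighted shifted convolution sum and then exploit the strong localisation in $y$ forced by $\psi(Yy)$. First I would insert the Fourier expansions, writing $F_{k_1}(z)\overline{G_{k_2}(z)} = y^{(k_1+k_2)/2}\sum_{n,m}a_f(n)\overline{a_g(m)}e((n-m)x)e^{-2\pi(n+m)y}$, into \eqref{S_l} and carry out the integral over $x\in[-1/2,1/2]$ against $a_l(y)e(lx)$. Only the diagonal $m=n+l$ survives, giving
\begin{equation*}
    S_l(Y) = \int_0^\infty\psi(Yy)\,y^{\frac{k_1+k_2}{2}-2}\,a_l(y)\sum_{n\geq1}a_f(n)\overline{a_g(n+l)}\,e^{-2\pi(2n+l)y}\,dy.
\end{equation*}
Writing $a_f(n)=\lambda_f(n)a_f(1)n^{(k_1-1)/2}$ and $a_g(n+l)=\lambda_g(n+l)a_g(1)(n+l)^{(k_2-1)/2}$ factors out $a_f(1)\overline{a_g(1)}$, which by \eqref{af1} contributes exactly the factor $(L(\sym^2 f,1)L(\sym^2 g,1))^{-1/2}$ appearing in the statement, together with explicit Gamma factors and a power of $4\pi$.

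The substance of the argument is the analysis of the remaining $y$-integral. Since $\psi$ is compactly supported on $\R^+$, the factor $\psi(Yy)$ confines $y\asymp Y^{-1}$, so I would replace $a_l(y)$ by its value $a_l(Y^{-1})$ at the centre of the support; the bounds of Section \ref{Fourier bounds} show that $a_l(y)$ equals $a_l(Y^{-1})$ up to bounded factors throughout this dyadic range, allowing $a_l(Y^{-1})$ to be pulled outside. What remains is the nonnegative weight
\begin{equation*}
    w_n := n^{\frac{k_1-1}{2}}(n+l)^{\frac{k_2-1}{2}}\int_0^\infty\psi(Yy)\,y^{\frac{k_1+k_2}{2}-2}\,e^{-2\pi(2n+l)y}\,dy.
\end{equation*}
For fixed $n$ the integrand $y^{(k_1+k_2)/2}e^{-2\pi(2n+l)y}$ peaks at $y\asymp(k_1+k_2)/n$, which lies in the support $y\asymp Y^{-1}$ precisely when $n\asymp Yk_2$. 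A Laplace/Stirling evaluation of the integral, combined with $\Gamma\lr{\tfrac{k_1+k_2}{2}}\leq\Gamma(k_1)^{1/2}\Gamma(k_2)^{1/2}$, shows that the normalised weight $\tilde{w}_n := (4\pi)^{(k_1+k_2-2)/2}\Gamma(k_1)^{-1/2}\Gamma(k_2)^{-1/2}w_n$ satisfies $\tilde{w}_n\ll 1/k_2$ throughout the interval $n\asymp Yk_2$ (which has length $\asymp Yk_2$) and decays rapidly once $n$ leaves this range.

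Assembling the pieces, and using $c_Y\asymp Y$ from \eqref{cY}, the residual $(4\pi)^{(k_1+k_2-2)/2}\Gamma(k_1)^{-1/2}\Gamma(k_2)^{-1/2}$ from $a_f(1)\overline{a_g(1)}$ merges with $w_n$ to give $\tilde{w}_n$, yielding
\begin{equation*}
    c_Y^{-1}S_l(Y)\ll\frac{|a_l(Y^{-1})|}{Y\,(L(\sym^2 f,1)L(\sym^2 g,1))^{1/2}}\sum_{n\geq1}|\lambda_f(n)\lambda_g(n+l)|\,\tilde{w}_n.
\end{equation*}
Splitting the $n$-sum at the plateau, the main range $n\asymp Yk_2$ contributes $\tfrac{1}{Yk_2}\sum_{n\asymp Yk_2}|\lambda_f(n)\lambda_g(n+l)|$, the first term of the claimed bound. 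On the complementary range I would invoke the Ramanujan--Petersson bounds $|\lambda_f(n)|\leq\tau(n)$ and $|\lambda_g(n+l)|\leq\tau(n+l)$ together with the decay of $\tilde{w}_n$ off the plateau; summing the divisor functions against the decaying weight produces the secondary term $Y^{\epsilon}(k_1+k_2)^{-1+\epsilon}$.

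The main obstacle is the uniform Laplace/Stirling analysis of $\tilde{w}_n$: I must establish both the sharp upper bound $\tilde{w}_n\ll 1/k_2$ on $n\asymp Yk_2$ and enough off-peak decay to control the tail, uniformly in $k_1,k_2,l$ and $Y$, with $l$ ranging up to $Y^{1+\epsilon}$. Keeping track of the mild asymmetry between $n$ and $n+l$ in the two power weights, and justifying the replacement of $a_l(y)$ by $a_l(Y^{-1})$ uniformly in the Whittaker parameters of $\phi$, are the places where care is needed; the remainder is bookkeeping with Stirling's formula and \eqref{af1}.
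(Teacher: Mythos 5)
Your reduction is correct and the overall skeleton --- Fourier expansion, the diagonal $m=n+l$, pulling out $a_l(Y^{-1})$ over the dyadic support of $\psi(Y\cdot)$, and normalising by $a_f(1)\overline{a_g(1)}$ via \eqref{af1} --- is exactly the paper's. Where you diverge is the treatment of the weight $w_n$: the paper does not run a Laplace-method analysis but instead writes $\psi(Yy)$ by Mellin inversion, evaluates the $y$-integral as $\Gamma\lr{s+\frac{k_1+k_2}{2}-1}(2\pi(2n+l))^{1-s-\frac{k_1+k_2}{2}}$, and then applies the uniform Stirling expansion $\Gamma(s+\alpha)/\Gamma(\alpha)=\alpha^s\lr{1+O((|s|+1)^2\alpha^{-1})}$ on the line $\Re(s)=1+\epsilon$. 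This reproduces $\psi\lr{Y\alpha/(2\pi(2n+l))}$ as the main term (which is supported precisely on $n\asymp Yk_2$ and yields your plateau bound $\tilde w_n\ll 1/k_2$, using the same binomial-coefficient inequality $\Gamma\lr{\frac{k_1+k_2}{2}}\leq\Gamma(k_1)^{1/2}\Gamma(k_2)^{1/2}$), while the Stirling error term gives, explicitly and uniformly in $n$, the quantity $(k_1+k_2)^{\epsilon}\lr{Y/(2n+l)}^{1+\epsilon}$, whose sum over $n$ against $\tau(n)\tau(n+l)$ is exactly the source of the secondary term $Y^{\epsilon}(k_1+k_2)^{-1+\epsilon}$ after dividing by $c_Y\asymp Y$. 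So the two approaches buy different things: yours would in fact give exponential (in $k_1+k_2$) decay genuinely off the plateau, far stronger than needed, but the "main obstacle" you flag --- uniform off-peak estimates, in particular the transition zone at the edge of the support of $\psi$ where $n\asymp Yk_2$ but the peak $y_0=A/c$ is near the boundary (that zone should simply be absorbed into the plateau term, since the trivial bound $\int\psi(Yy)y^{A-1}e^{-cy}dy\leq\|\psi\|_{\infty}\Gamma(A)c^{-A}$ already gives $\tilde w_n\ll 1/k_2$ for every $n$) --- is precisely what the Mellin--Stirling device dispenses with in one line. You would also need to make precise the interchange of $f$ and $g$ (equivalently the sign of $l$) that makes $n^{(k_1-1)/2}(n+l)^{(k_2-1)/2}\leq(n+l/2)^{(k_1+k_2)/2-1}$; the paper invokes the same symmetry.
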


\begin{proof}
Expanding the Fourier sum in the definition \eqref{S_l}, we obtain
\begin{equation*}
    S_l(Y)=\sum_{n \geq 1} a_f(n) \overline{a_g(n+l)} \int_{0}^\infty \psi(Yy) a_l(y) y^{\frac{k_1+k_2}{2}-2}e^{- 2 \pi (2n+l) dy} \ .
\end{equation*}
We note that the inner integral is only supported for $y \asymp 1/Y$. Hence
\begin{equation*}
     S_l(Y) \ll |a_l(Y^{-1})| \sum_{n \geq 1} a_f(n) \overline{a_g(n+l)} \int_{0}^\infty \psi(Yy) y^{\frac{k_1+k_2}{2}-2}e^{- 2 \pi (2n+l)} dy \ .
\end{equation*}
Similarly as in the proof of Lemma \ref{S_0}, using the inverse Mellin transform and evaluating the inner integral, we obtain
\begin{align*}
    S_l(Y) \ll |a_l(Y^{-1})|  \sum_{n \geq 1} a_f(n) \overline{a_g(n+l)} \frac{1}{2 \pi i} \int_{(\sigma)} Y^s \Psi(-s) (2 \pi (2n+l))^{1-s-\frac{k_1+k_2}{2}} \Gamma \lr{s+ \frac{k_1+k_2}{2}-1} ds .
    \end{align*}
From \eqref{af1}, we see that
\begin{equation*}
    S_l(Y) \ll \left | \frac{a_l(Y^{-1})}{L(\sym^2 f, 1)^{1/2} L(\sym^2 g , 1)^{1/2}} \right | \sum_{n \geq 1} |\lambda_f(n) \lambda_g(n+l)| A_{n,l}(Y) \ ,
\end{equation*}
where
\begin{equation*}
    A_{n,l}(Y):=\lr{\frac{n^{\frac{k_1-1}{2}}(n+l)^{\frac{k_2-1}{2}}}{\lr{n+\frac{l}{2}}^{\frac{k_1+k_2}{2}-1}}} \frac{1}{2 \pi i} \int_{(\sigma)} \Psi(-s) \lr{\frac{Y}{2 \pi (2n+l)}}^s \frac{\Gamma(s+ \frac{k_1+k_2}{2}-1)}{\Gamma(k_1)^{1/2} \Gamma(k_2)^{1/2}} ds .
\end{equation*}
If we interchange $f$ and $g$, which we can without losing the generality, then the first term will be bounded above by 1. From Stirling's relations, any vertical strip $0<a \leq \Re (s) \leq b$ and $k>1$, we have
\begin{equation}
    \label{gamma}
    \frac{\Gamma(s+\alpha)}{\Gamma(\alpha)}=\alpha^s \lr{1+O_{a,b}\lr{(|s|+1)^2 \alpha^{-1}}} ,
\end{equation}
see \cite[(19)]{Hol}. Choosing the line of integration $\Re(s)=\sigma=1+\epsilon$, we obtain
\begin{align*}
    A_{n,l}(Y) &\ll  \frac{1}{2 \pi i} \int_{(\sigma)} \Psi(-s) \lr{\frac{Y}{2 \pi (2n+l)}}^s \frac{\Gamma(s+ \frac{k_1+k_2}{2}-1)}{\Gamma \lr{\frac{k_1+k_2}{2}-1}}\frac{\Gamma \lr{\frac{k_1+k_2}{2}-1}}{\Gamma(k_1)^{1/2} \Gamma(k_2)^{1/2}} ds \\
    & \ll \frac{{k_1+k_2-2 \choose k_1-1}^{1/2}}{(k_1+k_2) {k_1+k_2-2 \choose \frac{k_1+k_2}{2} -1}^{1/2}}  \lr{\psi \lr{\frac{Y \lr{\frac{k_1+k_2}{2}-1}}{2 \pi (2n+l)}} + (k_1+k_2)^{\epsilon} \lr{\frac{Y}{2n+l}}^{1+\epsilon}}.
\end{align*}
 Therefore we get
\begin{equation*}
    S_l(Y) \ll \left | \frac{a_l(Y^{-1})}{L(\sym^2 f, 1)^{1/2} L(\sym^2 g , 1)^{1/2}} \right | \lr{\frac{1}{ k_2}\sum_{n \asymp Y k_2 } |\lambda_f(n) \lambda_g(n+l)| + Y^{1+\epsilon}(k_1+k_2)^{-1+\epsilon}} .
\end{equation*}
\end{proof}



We recall \cite[Theorem 1.2]{Hol}.

\begin{theorem}
\label{sieve}
Let $\lambda_1(n)$ and $\lambda_2(n)$ be multiplicative functions such that $|\lambda_i(n)|\leq \tau(n)$. Then for any $0<\delta<1$ and any fixed integer $0<|l| \leq x$, we have
\begin{equation*}
    \sum_{n \leq x}|\lambda_1(n) \lambda_2(n+l)| \ll x (\log x)^{-2+ \delta} \tau(|l|) \prod_{p \leq z} \lr{1+\frac{|\lambda_1(p)|}{p}}\lr{1+\frac{|\lambda_2(p)|}{p}}
\end{equation*}
where $z=\exp \lr{\frac{\log x}{\delta \log \log x}}$.
\end{theorem}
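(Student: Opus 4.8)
The plan is to prove this as a two–dimensional sieve estimate, using only the divisor bound $g_i:=|\lambda_i|\le\tau$ and the multiplicativity of the $g_i$. Since each $g_i$ is a nonnegative multiplicative function bounded by $\tau$, I would write $g_i=\mathbf{1}*h_i$ as a Dirichlet convolution, so that $h_i$ is multiplicative with $h_i(p)=g_i(p)-1\in[-1,1]$ and, more generally, $|h_i(p^a)|\le a+1$; the crucial point is that $h_i$ encodes only a ``small'' correction at each prime. Opening both convolutions and interchanging the order of summation gives
\begin{equation*}
    \sum_{n\le x} g_1(n)g_2(n+l)=\sum_{d_1,d_2} h_1(d_1)h_2(d_2)\,\#\{n\le x:\ d_1\mid n,\ d_2\mid n+l\}.
\end{equation*}
The inner count vanishes unless $(d_1,d_2)\mid l$, in which case the two congruences determine a unique class modulo $[d_1,d_2]$ and the count equals $x/[d_1,d_2]+O(1)$. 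This already reveals the shape of the answer: the compatibility condition $(d_1,d_2)\mid l$ is what ultimately produces the factor $\tau(|l|)$, while the main term $x\sum_{(d_1,d_2)\mid l} h_1(d_1)h_2(d_2)/[d_1,d_2]$ factors over primes into local densities.

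First I would truncate the $d_i$ at the sieve level dictated by $z$, discarding the $z$–rough part of the factorisations; the heuristic is that a typical $n\le x$ is built almost entirely from primes $\le z$, so replacing $g_i(n)$ by the contribution of its $z$–smooth part costs little. Integers all of whose prime factors exceed $z$ are rare, by Mertens/Buchstab, and this rarity is exactly what yields the saving $(\log x)^{-2+\delta}$: one factor of $\log x$ for each of the two ``$\mathbf 1$''s in $g_i=\mathbf 1*h_i$, with $\delta$ lost in balancing the smoothness threshold against the accumulated error. On the truncated range, the local density at each prime $p\le z$ evaluates to a factor comparable to $(1+g_1(p)/p)(1+g_2(p)/p)$, and multiplying these over $p\le z$ gives the stated product $\prod_{p\le z}(1+|\lambda_1(p)|/p)(1+|\lambda_2(p)|/p)$.

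The main obstacle is controlling the accumulated $O(1)$ error terms uniformly: summed naively over all admissible $d_1,d_2$ they diverge, so the whole argument hinges on the level of distribution, i.e.\ on taking $z=\exp(\log x/(\delta\log\log x))$ so that the truncated moduli $[d_1,d_2]$ stay below a small power of $x$ and the error stays $o$ of the main term. Equivalently, the clean way to package this is to invoke the fundamental lemma of the sieve, or Shiu's theorem for sums of $\tau$–bounded multiplicative functions in arithmetic progressions: writing $g_2(n+l)=\sum_{d\mid n+l}h_2(d)$ and applying Shiu's bound to $\sum_{n\le x,\ n\equiv -l\ (d)} g_1(n)$ for each $d$ up to $x^{1-\epsilon}$ produces the $(\log x)^{-1}$ saving and the $g_1$–product directly, after which summing $h_2(d)$ against the resulting $1/\phi(d)$ reproduces the $g_2$–product and a second $\log$ saving, with the divisors of $l$ re–entering through the conditions on $(l,d)$ and recovering $\tau(|l|)$. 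Either route reduces the theorem to a standard but delicate uniform sieve estimate, and securing that uniformity—in $l$, in $x$, and in the values $\lambda_i(p)$—is where essentially all the work lies.
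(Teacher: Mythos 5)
First, a point of reference: the paper does not prove this statement at all — it is quoted verbatim as Theorem 1.2 of Holowinsky \cite{Hol} — so the comparison has to be with Holowinsky's original argument, and against that your proposal has a genuine structural gap rather than being a different valid route. Opening both convolutions $g_i=\mathbf{1}*h_i$ and writing the sum as $\sum_{d_1,d_2}h_1(d_1)h_2(d_2)\,(x/[d_1,d_2]+O(1))$ cannot be pushed through: the divisors $d_2\mid n+l$ range up to about $x$, so before truncation the accumulated $O(1)$ errors are of size roughly $x^{2}$, and truncating the $d_i$ at any level $D\le x^{1-\epsilon}$ where the errors are controllable discards a tail whose contribution is \emph{not} negligible in absolute value — e.g.\ if $|\lambda_i(p)|$ is small on average then $|h_i(p)|=|g_i(p)-1|\asymp 1$ and $\sum_{D<d\le x}|h_i(d)|/d\asymp\log(x/D)$, which swamps the target bound $x(\log x)^{-2+\delta}\prod(1+g_i(p)/p)$. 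The logarithmic savings you are after live in the \emph{signed} sums $\sum h_i(d)/d$, and you cannot harvest that cancellation once the congruence counts are only known as upper bounds or up to uncontrolled errors. The same objection defeats the Shiu fallback: Shiu's theorem is an upper bound for $\sum_{n\le x,\ n\equiv -l\,(d)}g_1(n)$, and an upper bound cannot be summed against the signed weights $h_2(d)$ to extract the second factor of $(\log x)^{-1}$; replacing $h_2(d)$ by $|h_2(d)|$ in $\sum_d|h_2(d)|/\phi(d)$ loses that entire logarithm. Your own closing sentence correctly identifies uniformity of the error terms as ``where essentially all the work lies,'' but neither of the two mechanisms you propose actually supplies it.

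The proof that works (Holowinsky's, in the Nair--Shiu tradition) uses a different, sign-free decomposition. One factors $n=a_1b_1$ and $n+l=a_2b_2$ with $a_i$ the $z$-smooth part and $b_i$ the $z$-rough part, and bounds the rough contribution by $|\lambda_i(b_i)|\le\tau(b_i)\le 2^{\Omega(b_i)}\le(\log x)^{\delta\log 2}$, using $\Omega(b_i)\le\log x/\log z=\delta\log\log x$ — this is exactly why $z$ is chosen as in the statement, and it is where the loss of $\delta$ in the exponent comes from. One then applies a two-dimensional \emph{upper-bound} sieve with nonnegative weights to count $n\le x$ with $a_1\mid n$, $a_2\mid n+l$ and both cofactors free of primes below $z$; the sieve yields $\ll \tau(|l|)\,\frac{x}{[a_1,a_2]}\prod_{p\le z}(1-1/p)^2$, the factor $\prod_{p\le z}(1-1/p)^2\asymp(\log z)^{-2}$ supplies the two logarithms, the local conditions at $p\mid l$ produce $\tau(|l|)$, and finally $\sum_{a_1,a_2}|\lambda_1(a_1)\lambda_2(a_2)|/[a_1,a_2]$ is bounded by the stated Euler product. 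Every weight in this argument is nonnegative, so only upper bounds are ever needed — that is the key idea missing from your proposal, and it is not a repair of your convolution identity but a replacement of it.
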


We apply Theorem \ref{sieve} with $\lambda_1=\lambda_f$ and $\lambda_2=\lambda_g$. The Ramanujan conjecture for holomorphic cusp forms ensures that the conditions in the statement of the theorem are satisfied. There exists a constant $C_{\psi}$ such that, for all $\epsilon>0$
\begin{align*}
    &\sum_{n} |\lambda_f(n) \lambda_g(n+l)| \psi \lr{\frac{Y \lr{\frac{k_1+k_2}{2}-1}}{2 \pi (2n+l)}}    \ll  \sum_{n \leq C_{\psi}Y^(k_1+k_2)} \lambda_f(n) \lambda_g(n+l) \\
    \ll &  \tau(|l|)Y^{1 + \epsilon} (k_1+k_2) (\log (k_1+k_2))^{-2+\epsilon} \prod_{p \leq (k_1 +k_2)^{\epsilon}} \lr{1+\frac{|\lambda_1(p)|}{p}}\lr{1+\frac{|\lambda_2(p)|}{p}} .
\end{align*}

    {\it Case 1}: $\phi$ is an incomplete Eisenstein series. Using Lemma \ref{bound eisenstein}, we have that
    \begin{align*}
        S_l(Y) + S_{-l}(Y) \ll 2^{k_2-k_1} S(f,g) ^{-1/2}Y^{1/2+\epsilon}  \tau(l)^2   (\log k_2)^{-2+\epsilon} \prod_{p \leq k_2} \lr{1+\frac{|\lambda_1(p)|}{p}} \lr{1+\frac{|\lambda_2(p)|}{p}} .
    \end{align*}
 We use the trivial bound 
    $$\sum_{1 \leq l <Y^{1+ \epsilon}} \tau(l)^2 \ll Y^{1+\epsilon}$$ to see that
    \begin{equation*}
       C_Y^{-1} \sum_{0 < |l| < Y^{1+\epsilon}} S_l(Y) \ll 2^{k_2-k_1} Y^{1/2+\epsilon} M_{k_1,k_2}(f,g) .
    \end{equation*}

     {\it Case 2}: $\phi=u_{j, k_2-k_1}$ is a Hecke--Maa{\ss} cusp form. It is very similar to the above case, where we employ Lemma \ref{bound cusp} instead. While we sum $S_l(Y)$, we need to bound
    \begin{align*}
        \sum_{0<l<Y^{1+\epsilon}} \tau(l) c_j(l) \ll \lr{ \sum_{0<l<Y^{1+\epsilon}} \tau(l)^2}^{1/2} \lr{ \sum_{0<l<Y^{1+\epsilon}} c_j(l)^2}^{1/2} \ll Y^{1+\epsilon},
    \end{align*}
    where the bound for the second sum over the Hecke eigenvalues follows from \cite[p. 55]{Iw}.

To finish the proof of Theorem \ref{shifted main}, we simply choose $Y=M_{k_1,k_2}(f,g)^{-1}$. If $M_{k_1,k_2}(f,g)>1$, we take $Y=1$.  

\section{Proofs of Theorem \ref{main} and Theorem \ref{fixed difference}}
\label{proofs}

\begin{lemma}
If $k_1 \leq k_2$ and $\log k_1 \geq C \log k_2$, for some absolute constant $C$, then
\begin{equation*}
    M_{k_1,k_2}(f,g) \ll_{\epsilon} (\log k_2)^{1/6+ \epsilon} L(1, \sym ^2 f)^{\frac{1}{4}}   L(1, \sym ^2 g)^{\frac{1}{4}} .
\end{equation*}
\end{lemma}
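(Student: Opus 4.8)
The plan is to reduce the claim to a pointwise upper bound on each of the two Euler products in $M_{k_1,k_2}(f,g)$. Writing $P_f=\prod_{p\le k_2}(1+|\lambda_f(p)|/p)$ and $P_g=\prod_{p\le k_2}(1+|\lambda_g(p)|/p)$, the definition \eqref{M(f,g)} shows it suffices to prove
\[
P_f\ll_\epsilon (\log k_2)^{13/12+\epsilon}L(1,\sym^2 f)^{3/4},\qquad P_g\ll_\epsilon (\log k_2)^{13/12+\epsilon}L(1,\sym^2 g)^{3/4},
\]
since multiplying these and dividing by $(\log k_2)^2 L(1,\sym^2 f)^{1/2}L(1,\sym^2 g)^{1/2}$ produces exactly the exponents $1/6+\epsilon$, $1/4$, $1/4$. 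The form $g$ has weight $k_2$, so the cutoff $p\le k_2$ already matches its conductor scale $\asymp k_2^2$, and the bound for $P_g$ follows directly from the analytic input below. The only role of the hypothesis $\log k_1\ge C\log k_2$ is to bring $P_f$, whose natural conductor scale $\asymp k_1^2$ is smaller, onto the same footing.

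First I would dispose of the primes between $k_1$ and $k_2$ in $P_f$. By Deligne's bound $|\lambda_f(p)|\le 2$, together with $\log(1+t)\le t$ and Mertens' theorem,
\[
\log\prod_{k_1<p\le k_2}\Big(1+\tfrac{|\lambda_f(p)|}{p}\Big)\le 2\sum_{k_1<p\le k_2}\tfrac1p=2\big(\log\log k_2-\log\log k_1\big)+o(1).
\]
The hypothesis $\log k_1\ge C\log k_2$ (and $k_1\le k_2$) forces $\log\log k_1=\log\log k_2+O_C(1)$, so the right-hand side is $O_C(1)$; hence $P_f\ll_C\prod_{p\le k_1}(1+|\lambda_f(p)|/p)$, and we have reduced to a product truncated at the conductor scale of $f$.

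Next I would invoke the elementary inequality $|\lambda_f(p)|\le\tfrac13+\tfrac34\lambda_f(p)^2$, valid since $\tfrac34 t^2-t+\tfrac13=\tfrac34(t-\tfrac23)^2\ge 0$. Comparing logarithms termwise and using $\log(1+t)\ge t-t^2/2$ gives $1+|\lambda_f(p)|/p\le(1+1/p)^{1/3}(1+\lambda_f(p)^2/p)^{3/4}e^{O(1/p^2)}$, and as $\sum_p p^{-2}$ converges, taking the product over $p\le k_1$ yields
\[
\prod_{p\le k_1}\Big(1+\tfrac{|\lambda_f(p)|}{p}\Big)\ll\Big(\prod_{p\le k_1}\big(1+\tfrac1p\big)\Big)^{1/3}\Big(\prod_{p\le k_1}\big(1+\tfrac{\lambda_f(p)^2}{p}\big)\Big)^{3/4}.
\]
Mertens bounds the first factor by $(\log k_1)^{1/3}$. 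For the second I would use $\lambda_f(p)^2=1+\lambda_{\sym^2 f}(p)$, so that $\log\prod_{p\le k_1}(1+\lambda_f(p)^2/p)=\log\log k_1+\sum_{p\le k_1}\lambda_{\sym^2 f}(p)/p+O(1)$, that is
\[
\prod_{p\le k_1}\Big(1+\tfrac{\lambda_f(p)^2}{p}\Big)\asymp\log k_1\cdot\exp\Big(\sum_{p\le k_1}\tfrac{\lambda_{\sym^2 f}(p)}{p}\Big).
\]

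The main obstacle is this last exponential: I must show the partial prime sum is governed by $L(1,\sym^2 f)$, i.e. $\sum_{p\le k_1}\lambda_{\sym^2 f}(p)/p\le\log L(1,\sym^2 f)+\epsilon\log\log k_1+O(1)$. Since $\log L(1,\sym^2 f)=\sum_p\lambda_{\sym^2 f}(p)/p+O(1)$, this is a lower bound on the tail $\sum_{p>k_1}\lambda_{\sym^2 f}(p)/p$; the trivial bound $\lambda_{\sym^2 f}(p)\ge -1$ is useless because $\sum_{p>k_1}1/p$ diverges, so genuine cancellation is required. I would extract it from the zero-free region for the degree-three $L$-function $L(s,\sym^2 f)$ (an effective Mertens/prime-number theorem), whose analytic conductor is $\asymp k_1^2$: because the cutoff $k_1$ is a fixed power of the conductor, the partial Euler product approximates $L(1,\sym^2 f)$ up to a factor $(\log k_1)^\epsilon$. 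Granting this, $\prod_{p\le k_1}(1+\lambda_f(p)^2/p)\ll(\log k_1)^{1+\epsilon}L(1,\sym^2 f)$, whence the Hölder split gives $P_f\ll(\log k_1)^{1/3}\big((\log k_1)^{1+\epsilon}L(1,\sym^2 f)\big)^{3/4}=(\log k_1)^{13/12+\epsilon}L(1,\sym^2 f)^{3/4}$. Finally $\log k_1\asymp\log k_2$ (again from the hypothesis) replaces $\log k_1$ by $\log k_2$, and the same argument for $g$ with $k_1$ replaced by $k_2$ completes the proof.
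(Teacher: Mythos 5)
Your argument is correct and follows essentially the same route as the paper: the inequality $|\lambda_f(p)|\le\tfrac13+\tfrac34\lambda_f(p)^2$, the Hecke relation $\lambda_f(p)^2=1+\lambda_f(p^2)$, Mertens' theorem, and the key analytic input that $\exp\big(\sum_{p\le k_1}\lambda_f(p^2)/p\big)$ is controlled by $L(1,\sym^2 f)$ up to small logarithmic losses. The only difference is that the paper imports this last input as a black box from \cite[Lemma 2]{HolSound}, namely $L(1,\sym^2 f)\gg(\log\log k_1)^{-3}\exp\big(\sum_{p\le k_1}\lambda_f(p^2)/p\big)$, whereas you sketch a (slightly weaker but still sufficient) re-derivation of it from the zero-free region of $L(s,\sym^2 f)$.
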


\begin{proof}
The key input is \cite[Lemma 2]{HolSound} which states that
\begin{equation}
\label{lemma 2 hol sound}
    L(1, \sym^2 f) \gg (\log \log k_1)^{-3} \exp \lr{\sum_{p \leq k_1} \frac{\lambda_f(p^2)}{p}}, 
\end{equation}
and a similar statement holds for $L(1, \sym^2 g)$. As in \cite[Lemma 3]{HolSound}, we use the inequality $|x| \leq \frac{1}{3}+ \frac{3}{4} x^2$ and the Hecke relations $\lambda_f(p^2)  = \lambda_f(p)^2-1$ to see that
\begin{align*}
    \sum_{p \leq k_1} \frac{|\lambda_f(p)|}{p} &\leq \frac{1}{3} \sum_{p \leq k_1} \frac{1}{p} + \frac{3}{4} \sum_{p \leq k_1} \frac{\lambda_f(p)^2}{p} \\ &= \frac{13}{12} \sum_{p \leq k_1}\frac{1}{p} + \frac{3}{4} \sum_{p \leq k_1} \frac{\lambda_f(p^2)}{p} \\ &\leq \frac{13}{12} \log \log k_1 + \frac{3}{4} \sum_{p \leq k_1} \frac{\lambda_f(p^2)}{p} + O(1) .
\end{align*}
 Now the conclusion follows from \eqref{lemma 2 hol sound} and the fact that $\log k_1 \asymp \log k_2$.
\end{proof}

\subsection{Proof of Theorem \ref{fixed difference}}
From the analysis in Section \ref{theory weight k}, it suffices to bound $\inprod{u_{j,k_2-k_1}F_{k_1}}{G_{k_2}}$ and $\inprod{E_{k_2-k_1}(z | \psi) F_{k_1}}{G_{k_2}}$. We have who cases, depending on the size of $L(1, \sym^2 f) L(1, \sym^2 g)$.

{\it Case (i)}: Suppose $L(1, \sym^2 f) L(1, \sym^2 g) \geq (\log k_2)^{-5/6}$. Then by Lemma \ref{inner product cusp}, we have that
\begin{equation*}
    \left | \inprod{u_{j,k_2-k_1} F_{k_1}}{G_{k_2}} \right | \ll_{\epsilon}  \lr{1+ \frac{k_2-k_1}{2}}^{1/2} {(\log k_2)^{-1/12+ \epsilon}}.
\end{equation*}
For the Eisenstein case, from \eqref{blabla} we know that
\begin{equation*}
    E_{k_2-k_1}(z | \psi) = \delta_{k_1=k_2} \frac{3}{\pi}\Psi(-1) + \frac{1}{2 \pi i} \int_{(1/2)} \Psi(-s) E_{k_2-k_1}(z,s)ds .
\end{equation*}
Hence
\begin{align*}
    \inprod{ E_{k_2-k_1}(z | \psi) F_{k_1}}{G_{k_2}}  &= \delta_{f=g} \frac{3}{\pi} \Psi(-1) + \int_{-\infty}^{\infty} \Psi \lr{-\frac{1}{2} - it} \inprod{E_{k_2-k_1} \lr{\cdot, \frac{1}{2}+it} F_{k_1}}{G_{k_2}} dt.
\end{align*}
Now, using Lemma \ref{eisenstion inner products} and the fast decay of $\Psi(s)$ given by \eqref{decay}, we see that
\begin{align*}
     \left | \inprod{ E_{k_2-k_1}(z | \psi) F_{k_1}}{G_{k_2}} - \delta_{f=g} \frac{3}{\pi} \Psi(-1) \right | \ll_{\epsilon} (\log k_2)^{-\frac{1}{12}+ \epsilon} (1+ k_2-k_1)^{1/2}.
\end{align*}
Hence the conclusion follows if 
\begin{equation}
\label{gap}
    k_2-k_1 \leq  \log k_2^{1/6-\epsilon}.
\end{equation}
{\it Case (ii)}: Suppose $L(1, \sym^2 f) L(1, \sym^2 g) \leq (\log k_2)^{-5/6}$. Then we deduce the previous Lemma that $M_{k_1,k_2}(f,g)\ll_{\epsilon} (\log k_2)^{-\frac{1}{24}+ \epsilon
}$. The conclusion follows from Theorem \ref{shifted main} as long as $k_2-k_1 \leq c \log \log k_2$, for some constant $c$. If we optimise out choices, we can let any $c<\frac{1}{12 \log 2 } \asymp 0.12 $.

\subsection{Proof of Theorem \ref{main}}It suffices to to bound $\inprod{u_j R_{k_1}^{k_2} F_{k_1}}{G_{k_2}}$  and $\inprod{E( z | \psi) R_{k_1}^{k_2} F_{k_1}}{G_{k_2}}$.

We begin with the cusp form case. From Lemma \ref{bound cusp}, $\inprod{u_j R_{k_1}^{k_2} F_{k_1}}{G_{k_2}}$ is small when $k_2-k_1 \geq N_{\epsilon}$, for some $N_{\epsilon}$ large enough depending only $\epsilon$. When $k_2-k_1 \leq N_{\epsilon}$, we just combine Lemma \ref{bound cusp} and Lemma \ref{shifted main} depending on the size of $L(1, \sym^2 f) L(1, \sym^2 g)$, as in the previous proof.

For the Eisenstein case, we use that 
\begin{align*}
     \inprod{E (z | \psi) R_{k_1}^{k_2}F_{k_1}}{G_{k_2}} - \delta_{f=g} \frac{3}{\pi} \Psi(-1)  = \int_{-\infty}^{\infty} \Psi \lr{-\frac{1}{2} - it} \inprod{E \lr{\cdot, \frac{1}{2}+it} R_{k_1}^{k_2} F_{k_1}}{G_{k_2}} dt.
\end{align*}
If $k_2-k_1 \leq N_{\epsilon}$, the conclusion follows again easily from Lemma \ref{eisenstion inner products} and the bound  for $\Psi(s)$ on vertical lines given by \eqref{decay} and from Lemma \ref{shifted main}.

If $k_2-k_1$ goes to infinity, we need to obtain a bound for $\Psi(s)s (s+1) \dots (s+n-1)$ in terms of $n$. By repeated partial integration, this boils down to estimating $\| \psi^{(n)} \|_{\infty}$. One problem is that these derivatives can grow arbitrarily fast in terms of $n$. We show that we can work with an approximation $\psi_{\epsilon}$ of $\psi$ such that $ \inprod{E (z | \psi_{\epsilon}) R_{k_1}^{k_2}F_{k_1}}{G_{k_2}}$ is very close to $ \inprod{E (z | \psi) R_{k_1}^{k_2}F_{k_1}}{G_{k_2}}$ and such that we can control $\| \psi_{\epsilon}^{(n)} \|_{\infty}$.

We need to construct a nontrivial function of compact support $\phi$ for which we control the sizes of derivatives $\| \phi^{(n)} \|_{\infty}$, for all $n$. 
From Denjoy--Carleman Theorem \cite[p. 380]{rudin}, we deduce that, for any $\delta>0$, there exists $\phi \in C^{\infty}(\R)$ supported on $[-1, 1]$ such that $\int_{\R} \phi(x) dx = 1$ and $\| \phi^{(n)} \| \ll_{\delta} n^{(1+\delta)n}$, for all $n$. From now on we consider $\delta$ fixed (we will choose it later).

For all $\epsilon>0$, we define $\phi_{\epsilon}(x) = \frac{1}{\epsilon} \phi \lr{\frac{x}{\epsilon}}$ . Then clearly $\phi_{\epsilon}$ is supported on $[-\epsilon, \epsilon]$ and $\int_{\R} \phi_{\epsilon}(x) dx = 1$. Now let any $\psi \in C_b (0,\infty)$. We consider the convolution $$\psi_{\epsilon}(x) : = (\psi * \phi_{\epsilon}) (x)  = \int_{\R} \psi(y) \phi_{\epsilon} (x-y) dy , $$ which is clearly compactly supported in $(0, \infty)$,  for $\epsilon$ small enough. It is not hard to see that
\begin{align*}
    \| \psi- \psi_{\epsilon} \|_{\infty} \leq \epsilon \|\psi'\|_{\infty} .
\end{align*}
Hence, for any $u,v \in \mathcal{L}_k(X)$ such that $\|u \|_2^2 = \|v \|_2^2 = 1$, we have
\begin{align*}
    \left |\inprod{E( z | \psi) u }{v} - \inprod{E( z | \psi_{\epsilon}) u }{v} \right | & = \left |\int_{X} (E( z | \psi) - E( z | \psi_{\epsilon})) u \overline{v} d \mu(z) \right | \\
    & = \left |  \int_0^{\infty} \int_0^1  (\psi(y) - \psi_{\epsilon}(y)) u(x) \overline{v(z)} \frac{dx dy}{y^2}\right | \\
    &\ll_{\psi} \epsilon ,
\end{align*}
since there are $O_{\psi}(1)$ copies of the fundamental domain for which $\psi(y)-\psi_{\epsilon}(y) \neq 0$ and $$\int_{\GH} |u \overline{v}| d \mu(z) \leq \int_{\GH} \frac{|u|^2+|v|^2}{2} d\mu(z) = 1. $$ 
This shows it is enough to consider $ \inprod{E (z | \psi_{\epsilon}) R_{k_1}^{k_2}F_{k_1}}{G_{k_2}}$. Clearly,
\begin{align*}
     \inprod{E (z | \psi_{\epsilon}) R_{k_1}^{k_2}F_{k_1}}{G_{k_2}} &= \delta_{f=g} \frac{3}{\pi} \Psi_{\epsilon}(-1)  + \int_{-\infty}^{\infty} \Psi_{\epsilon} \lr{-\frac{1}{2} - it} \inprod{E \lr{\cdot, \frac{1}{2}+it} R_{k_1}^{k_2} F_{k_1}}{G_{k_2}} dt,
\end{align*}
and we have that $$   \Psi_{\epsilon}(-1) =   \Psi(-1) + O_{\psi}(\epsilon).$$
From definition of $\psi_{\epsilon}$, we have $ \displaystyle \| \psi_{\epsilon}^{(k)} \|_{\infty} \ll_{\psi} \frac{k^{(1+\delta) k}}{\epsilon^k}$. Denote by $\Psi_{\epsilon}$ the Mellin transform of $\psi_{\epsilon}$. From repeated partial integration, we see that for $ |\sigma | \leq 2$, where $s=\sigma+it$, we have
\begin{equation*}
    \Psi_{\epsilon}(s) s (s+1) \dots (s+k-1) \ll_{\psi} \lr{\frac{k C_{\psi}}{\epsilon}}^k k^{\delta k},
\end{equation*}
for some constant $C_{\psi}$ depending on the support of $\psi$.

For simplicity of notation, let $\alpha=\frac{k_2-k_1}{2}$. Choose $\epsilon = \alpha^{-\delta/2}$. We apply Lemma \ref{eisenstion inner products} and choose $k=\alpha +3$.  We have 
\begin{align*}
 A(f,g, \psi_{\epsilon}) &: =   \left | \int_{-\infty}^{\infty} \Psi_{\epsilon} \lr{-\frac{1}{2} - it} \inprod{E \lr{\cdot, \frac{1}{2}+it} R_{k_1}^{k_2} F_{k_1}}{G_{k_2}} dt \right | \\
 &\ll   \frac{\Gamma\lr{\frac{k_1+k_2}{2} }^{1/2} }{\Gamma(k_2)^{1/2} \Gamma (\alpha)^{1/2} } k_2^{1/2} \lr{C_{\psi} \alpha }^{\lr{1+ \frac{3 \delta }{2}} \alpha} .
\end{align*}
Hence
\begin{equation*}
    \log A(f,g, \psi_{\epsilon}) \leq \left ( \frac{1}{2} + \frac{3\delta}{2} \right ) \alpha \log \alpha - \frac{\alpha}{2} \log k_2 + O(\log k_2 + \alpha).
\end{equation*}
The conclusion follows if we pick $\delta=1/12$.

\printbibliography[heading=bibintoc,title={References}]

\end{document}